\newtheorem{theorem}{Theorem}[section]
\newtheorem{lemma}[theorem]{Lemma}
\newtheorem{cor}[theorem]{Corollary}
\newtheorem{conj}[theorem]{Conjecture}
\theoremstyle{definition}
\newtheorem{defn}[theorem]{Definition}
\newtheorem{hypothesis}[theorem]{Hypothesis}
\newtheorem{remark}[theorem]{Remark}
\newtheorem{example}[theorem]{Example}
\newtheorem{convention}[theorem]{Convention}
\numberwithin{equation}{theorem}
\newcommand{\be}{\mathbf{e}}
\newcommand{\bv}{\mathbf{v}}
\newcommand{\dual}{\vee}
\newcommand{\calE}{\mathcal{E}}
\newcommand{\calH}{\mathcal{H}}
\newcommand{\calN}{\mathcal{N}}
\newcommand{\calNP}{\mathcal{NP}}
\newcommand{\calO}{\mathcal{O}}
\newcommand{\calP}{\mathcal{P}}
\newcommand{\calR}{\mathcal{R}}
\newcommand{\calS}{\mathcal{S}}
\newcommand{\CC}{\mathbb{C}}
\newcommand{\DD}{\mathbb{D}}
\newcommand{\FF}{\mathbb{F}}
\newcommand{\PP}{\mathbb{P}}
\newcommand{\QQ}{\mathbb{Q}}
\newcommand{\RR}{\mathbb{R}}
\newcommand{\ZZ}{\mathbb{Z}}
\newcommand{\frako}{\mathfrak{o}}
\DeclareMathOperator{\an}{an}
\DeclareMathOperator{\bd}{bd}
\DeclareMathOperator{\coker}{coker}
\DeclareMathOperator{\dR}{dR}
\DeclareMathOperator{\Frac}{Frac}
\DeclareMathOperator{\Gal}{Gal}
\DeclareMathOperator{\GL}{GL}
\DeclareMathOperator{\inte}{int}
\DeclareMathOperator{\Irr}{Irr}
\DeclareMathOperator{\length}{length}
\DeclareMathOperator{\sep}{sep}
\DeclareMathOperator{\val}{val}
\begin{document}

\title{Convergence polygons for connections on nonarchimedean curves}
\author{Kiran S. Kedlaya}
\date{July 10, 2015}
\thanks{Thanks to Francesco Baldassarri, Andrew Obus, Andrea Pulita, Junecue Suh, Daniele Turchetti, the anonymous referee, and the participants of the Simons Symposium for detailed feedback. The author was supported by NSF grant DMS-1101343 and UC San Diego
(Warschawski Professorship), and by a Simons Visiting Professorship grant during November 2014 (visiting Pulita).}

\maketitle

In classical analysis, one builds the catalog of special functions by repeatedly adjoining solutions of differential equations whose coefficients are previously known functions. Consequently, the properties of special functions depend crucially on the basic properties of ordinary differential equations.
This naturally led to the study of formal differential equations, as in the seminal work of Turrittin \cite{turrittin}; this may be viewed retroactively as a theory of differential equations over a trivially valued field.
After the introduction of $p$-adic analysis in the early 20th century, there began to be corresponding interest in solutions of $p$-adic differential equations; however, aside from some isolated instances (e.g., the proof of the Nagell-Lutz theorem; see Theorem~\ref{T:p-adic Cauchy}), a unified theory of $p$-adic ordinary differential equations did not emerge until the pioneering work of Dwork on the relationship between $p$-adic special functions and the zeta functions of algebraic varieties over finite fields (e.g., see \cite{dwork-zeta2, dwork-cycles}). 
At that point, serious attention began to be devoted to a 
serious discrepancy between the $p$-adic and complex-analytic theories:  on an open $p$-adic disc, a nonsingular differential equation can have a formal solution which does not converge in the entire disc (e.g., the exponential series).
One is thus led to quantify the convergence of power series solutions of differential equations involving rational functions over a nonarchimedean field;
this was originally done by Dwork in terms of the \emph{generic radius of convergence}
\cite{dwork-pde2}. This and more refined invariants were studied 
by numerous authors during the half-century following Dwork's initial work, as documented in the author's book \cite{kedlaya-book}.

At around the time that \cite{kedlaya-book} was published, a new perspective was introduced by Baldassarri \cite{baldassarri} 
(and partly anticipated in prior unpublished work of Baldassarri and Di Vizio \cite{baldassarri-divizio})
which makes full use of Berkovich's theory of nonarchimedean analytic spaces. Given a differential equation as above, or more generally a connection on a curve over a nonarchimedean field, one can define an invariant called the \emph{convergence polygon}; this is a function from the underlying Berkovich topological space of the curve into a space of Newton polygons, which measures the convergence of formal horizontal sections and is well-behaved with respect to both the topology and the piecewise linear structure on the Berkovich space. 
One can translate much of the prior theory of $p$-adic differential equations into (deceptively) simple statements about the behavior of the convergence polygon;
this process was carried out in a series of papers by Poineau and Pulita
\cite{pulita-poineau, pulita-poineau2, pulita-poineau4, pulita-poineau5},
as supplemented by work of this author \cite{kedlaya-radii}
and
upcoming joint work with Baldassarri \cite{baldassarri-kedlaya}.

In this paper, we present the basic theorems on the convergence polygon, 
which provide a number of combinatorial constraints that may be used to extract information about convergence of formal horizontal sections at one point from corresponding information at other points. We include numerous examples to illustrate some typical behaviors of the convergence polygon. We also indicate some relationships between convergence polygons, index formulas for de Rham cohomology, and the geometry of finite morphisms, paying special attention to the case of cyclic $p$-power coverings with $p$ equal to the residual characteristic.
This case is closely linked with the Oort lifting problem for Galois covers of curves in characteristic $p$, and some combinatorial constructions arising in that problem turn out to be closely related to convergence polygons. There are additional applications
to the study of integrable connections on higher-dimensional nonarchimedean analytic spaces, both in the cases of zero residual characteristic \cite{kedlaya-goodformal2} and positive residual characteristic \cite{kedlaya-semistable4}, but we do not pursue these applications here.

To streamline the exposition, we make no attempt to indicate the techniques of proof underlying our main results; in some cases, quite sophisticated arguments are required.
We limit ourselves to saying that the basic tools are developed in a self-contained fashion in \cite{kedlaya-book} (but without reference to Berkovich spaces), and the other aforementioned results are obtained by combining results from \cite{kedlaya-book} in an intricate manner.
(Some exceptions are made for results which do not occur in any existing paper; their proofs are relegated to an appendix.)
We also restrict generality by considering only proper curves, even though many of the results we discuss can be formulated for open curves, possibly of infinite genus.

At the suggestion of the referee, we include (as Appendix~\ref{sec:thematic bib}) a \emph{thematic bibliography}
in the style of \cite{robba-cohom}
listing additional references germane to the topics discussed in the article.
We also include references for the following topics which we have omitted out of space considerations, even though in practice they are thoroughly entangled with the proofs of the results which we do mention.
\begin{itemize}
\item
\emph{Decomposition theorems}, which give splittings of connections analogous to the factorizations of polynomials given by various forms of Hensel's lemma.
\item
\emph{Monodromy theorems}, which provide structure theorems for connections at the expense of passing from a given space to a finite cover.
\item
\emph{Logarithmic growth}, i.e., secondary terms in the measurement of convergence of horizontal sections.
\end{itemize}

\section{Newton polygons}

As setup for our definition of convergence polygons, we fix some conventions regarding Newton polygons. 
\begin{defn}
For $n$ a positive integer, let $\calP[0,n]$ be the set of continuous functions $\calN: [0,n] \to \RR$ satisfying the following conditions.
\begin{enumerate}
\item[(a)]
We have $\calN(0) = 0$.
\item[(b)]
For $i=1,\dots,n$, the restriction of $\calN$ to $[i-1,i]$ is affine. 
\end{enumerate}
For $i=1,\dots,n$, we write $h_i: \calP[0,n] \to \RR$ for the function $\calN \mapsto \calN(i)$;
we call $h_i(\calN)$ the $i$-th \emph{height} of $\calN$. The product map
$h_1 \times \cdots \times h_n: \calP[0,n] \to \RR^n$ is a bijection, using which we equip $\calP[0,n]$ with a topology and an integral piecewise linear structure. 
We sometimes refer to $h_n$ simply as $h$ and call it the \emph{total height}.
\end{defn}

\begin{defn} \label{D:slope}
Let $\calNP[0,n]$ be the subset of $\calP[0,n]$ consisting of \emph{concave} functions.
For $i=1,\dots,n$,
we write $s_i: \calP[0,n] \to \RR$ for the function $\calN \mapsto \calN(i) - \calN(i-1)$;
we call $s_i(\calN)$ the $i$-th \emph{slope} of $\calN$. For $\calN \in \calNP[0,n]$, we have $s_1(\calN) \geq \cdots \geq s_n(\calN)$.
\end{defn}

\begin{defn}
Let $I \subseteq \RR$ be a closed interval. A function $\calN: I \to \calNP[0,n]$ is \emph{affine} if it has the form $\calN(t) = \calN_0 + t \calN_1$ for some $\calN_0, \calN_1 \in  \calP[0,n]$. In this case, we call $\calN_1$ the \emph{slope} of $\calN$.
We say that $\calN$ has \emph{integral derivative} if $\calN_1(i) \in \ZZ$ for $i=n$ and for each $i \in \{1,\dots,n-1\}$ such that for all $t$ in the interior of $I$, $\calN(t)$ has a change of slope at $i$. 
This implies that the graph of $\calN_1$ has vertices only at lattice points, but not conversely.
(It would be natural to use the terminology \emph{integral slope}, but we avoid this terminology to alleviate confusion with Definition~\ref{D:slope}.)
\end{defn}

\section{PL structures on Berkovich curves}

We next recall the canonical piecewise linear structure on a Berkovich curve
(e.g., see \cite{bpr}).

\begin{hypothesis}
For the rest of this paper, let $K$ be a \emph{nonarchimedean field},
i.e., a field complete with respect to a nonarchimedean absolute value;
let $X$ be a smooth, proper, geometrically connected curve over $K$;
let $Z$ be a finite set of closed points of $X$;
and put $U = X - Z$.
\end{hypothesis}

\begin{convention}
Whenever we view $\QQ_p$ as a nonarchimedean field, we normalize the $p$-adic absolute value so that $|p| = p^{-1}$.
\end{convention}

\begin{remark}
Recall that the points of the Berkovich analytification $X^{\an}$ may be identified with equivalence classes of pairs $(L,x)$ in which $L$ is a nonarchimedean field over $K$ and $x$ is an element of $X(L)$, where the equivalence relation is generated by relations of the form $(L,x) \sim (L', x')$ where $x'$ is the restriction of $x$ along a continuous $K$-algebra homomorphism $L \to L'$. As is customary, we classify points of $X^{\an}$ into types 1,2,3,4 (e.g., see \cite[Proposition~4.2.7]{kedlaya-radii}). To lighten notation, we identify $Z$ with $Z^{\an}$, which is a finite subset of $X^{\an}$ consisting of type 1 points.
\end{remark}

\begin{defn}
For $\rho >0$, let $x_\rho$ denote the generic point of the disc $|z| \leq \rho$ in $\PP^1_K$.
A \emph{segment} in $X^{\an}$ is a closed subspace $S$ homeomorphic to a closed interval for which there exist an open subspace $V$ of $X^{\an}$, a choice of values $0 \leq \alpha < \beta \leq +\infty$, and an isomorphism of $V$ with $\{z \in \PP^{1,\an}_K: \alpha < |z| < \beta\}$ identifying  the interior of $S$
with $\{x_\rho: \rho \in (\alpha,\beta)\}$. A \emph{virtual segment} in $X^{\an}$ is a connected closed subspace whose base extension to some finite extension of $K$ is a disjoint union of segments.

A \emph{strict skeleton} in $X^{\an}$ is a subspace $\Gamma$ containing $Z^{\an}$ equipped with a homeomorphism to a finite connected graph, such that each vertex of the graph corresponds to either a point of $Z$ or a point of type 2, and each edge corresponds to a virtual segment, and $X^{\an}$ retracts continuously onto $\Gamma$.
Using either tropicalizations or semistable models, one may realize
$X^{\an}$ as the inverse limit of its strict skeleta;
again, see \cite{bpr} for a detailed discussion.
\end{defn}

\begin{defn} \label{D:minimal strict skeleton}
Note that 
\[
\chi(U) = 2 - 2g(X) - \length(Z),
\]
so $\chi(U) \leq 0$ if and only if either $g(X) \geq 1$ or $\length(Z) \geq 2$.
In this case, there is a unique minimal strict skeleton in $X^{\an}$, which we denote
$\Gamma_{X,Z}$. Explicitly, if $K$ is algebraically closed, then the underlying set of $\Gamma_{X,Z}$ is the complement in $X^{\an}$ of the union of all open discs in $U^{\an}$;
for general $K$, the underlying set of $\Gamma_{X,Z}$ is the image under restriction of the minimal strict skelelon in $X_L^{\an}$ for $L$ a completed algebraic closure of $K$.
In particular, if $K'$ is the completion of an algebraic extension of $K$, then the minimal strict skeleton in $X_{K'}^{\an}$ is the inverse image of $\Gamma_{X,Z}$ in $X_{K'}^{\an}$.
However, the corresponding statement for a general nonarchimedean field extension $K'$ of $K$ is false;
see Definition~\ref{D:branch locus} for a related phenomenon.
\end{defn}

\section{Convergence polygons: projective line}
\label{sec:convergence p1}

We next introduce the concept of the convergence polygon associated to a differential equation on $\PP^1$. 
\begin{hypothesis} \label{H:convergence p1}
For the rest of this paper, we assume that the nonarchimedean field $K$ is of characteristic $0$, as otherwise the study of differential operators on $K$-algebras has a markedly different flavor (for instance, any derivation on a ring $R$ of characteristic $p>0$ has the subring of $p$-th powers in its kernel). By contrast, the residue characteristic of $K$, which we call $p$, may be either 0 or positive unless otherwise specified (e.g., if we refer to $\QQ_p$ then we implicitly require $p>0$).
\end{hypothesis}

\begin{hypothesis}
For the rest of \S\ref{sec:convergence p1}, 
take $X = \PP^1_K$, assume $\infty \in Z$,
and  consider the differential equation
\begin{equation} \label{eq:de}
y^{(n)} + f_{n-1}(z) y^{(n-1)} + \cdots + f_0(z) y = 0
\end{equation}
for some rational functions $f_0, \dots, f_{n-1} \in K(z)$ with poles only within $Z$.
If $Z = \{\infty\}$, let $m$ be the dimension of the $K$-vector space of entire solutions of \eqref{eq:de}; otherwise, take $m=0$.
\end{hypothesis}

\begin{defn}
For any nonarchimedean field $L$ over $K$ and any $x \in U(L)$, let
$S_x$ be the set of formal solutions of \eqref{eq:de} with $y \in L \llbracket z-x \rrbracket$.
By interpreting \eqref{eq:de} as a linear recurrence relation of order $n$ on the coefficients of a power series, we see that every list of $n$ initial conditions at $z=x$ corresponds to a unique formal solution; that is, the composition 
\[
S_x \to L \llbracket z-x \rrbracket \to L \llbracket z-x \rrbracket/(z-x)^n
\]
is a bijection. In particular, $S_x$ is an $L$-vector space of dimension $n$.
\end{defn}

\begin{theorem}[$p$-adic Cauchy theorem] \label{T:p-adic Cauchy}
Each element of $S_x$ has a positive radius of convergence.
\end{theorem}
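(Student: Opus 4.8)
\emph{Proof proposal.} The plan is to read the coefficient recurrence attached to \eqref{eq:de} directly off the equation and bound the Taylor coefficients of a formal solution, the one delicate point being the $p$-adic size of the denominators that appear. Fix $x \in U(L)$ and $y = \sum_{j\ge 0} a_j (z-x)^j \in S_x$; after translating $z$ we may assume $x=0$. Since $0 \in U$ and each $f_i \in K(z)$ has its poles in $Z$, each $f_i$ is analytic at $0$, so we may write $f_i(z) = \sum_{k\ge 0} f_{i,k} z^k$ and choose $C>0$, $\rho>0$ with $|f_{i,k}| \le C\rho^{-k}$ for all $i,k$ (the coefficients of a power series over a nonarchimedean field with radius of convergence exceeding $\rho$ satisfy such an estimate). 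Substituting $y$ into \eqref{eq:de} and extracting the coefficient of $z^m$ (with the convention $f_n\equiv 1$) gives, for each $m\ge 0$,
\[
\frac{(m+n)!}{m!}\,a_{m+n} \;=\; -\sum_{i=0}^{n-1}\sum_{k=0}^{m} f_{i,k}\,\frac{(m-k+i)!}{(m-k)!}\,a_{m-k+i},
\]
in which $(m+n)!/m!$ is a nonzero integer, hence invertible in $L$ because $\operatorname{char} K = 0$.

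Set $B = \max\{1,\rho^{-1},C\}$ and choose $A>0$ so large that $|a_\ell| \le A B^\ell/|\ell!|$ for $\ell = 0,1,\dots,n-1$; this is possible because $a_0,\dots,a_{n-1}$ are the finitely many free initial data, by bijectivity of $S_x \to L\llbracket z\rrbracket/(z^n)$. I would then prove by induction on $j$ that $|a_j| \le A B^j/|j!|$ for all $j$: granting this for all indices below $m+n$, one estimates the right-hand side of the recurrence termwise using $|f_{i,k}|\le C\rho^{-k}$, the inductive hypothesis (whereupon the factorials partly cancel), and the fact that ratios of factorials of integers have nonarchimedean absolute value $\le 1$; the resulting inequality closes up exactly because $i-n \le -1$ and $(\rho B)^{-k}\le 1$, which is where the three quantities defining $B$ are used. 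Finally, since $|j!| \ge |p|^{j/(p-1)}$ for $p>0$ by Legendre's formula, while $|j!| = 1$ for $p=0$, the bound yields $\limsup_j |a_j|^{1/j} \le B\,|p|^{-1/(p-1)} < \infty$ (read as $B$ when $p=0$), so $y$ converges on the disc of radius $|p|^{1/(p-1)}/B > 0$.

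The crux, and the reason this statement is not a triviality as its complex-analytic analogue would be, is the division by $(m+n)!/m!$, whose inverse can be $p$-adically large; what rescues the argument is that $|j!|^{-1}$ grows at most geometrically in $j$, with ratio $|p|^{-1/(p-1)}$. This is the same factor that forces the exponential series $\sum_j z^j/j!$ to converge only on the disc of radius $|p|^{1/(p-1)}$ rather than on the whole unit disc, and it is the simplest instance of the phenomenon motivating the entire theory. An equivalent route would be to rewrite \eqref{eq:de} as a first-order system $Y' = GY$ with $G$ the companion matrix of the $f_i$ and run the same majorant estimate on matrix-valued power series; the scalar recurrence merely makes the bookkeeping lighter.
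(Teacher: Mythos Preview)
Your argument is correct. The induction closes exactly as you describe: from the recurrence one bounds
\[
|a_{m+n}| \le \frac{|m!|}{|(m+n)!|}\,\max_{i,k}\, C\rho^{-k}\,\frac{A B^{m-k+i}}{|(m-k)!|}
= \frac{A B^{m+n}}{|(m+n)!|}\cdot \frac{|m!|}{|(m-k)!|}\cdot C\rho^{-k}B^{i-n-k},
\]
and each of the three trailing factors is at most $1$ by the choice $B=\max\{1,\rho^{-1},C\}$ together with $|m!/(m-k)!|\le 1$ and $i\le n-1$.

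As for comparison: the paper does not give a self-contained proof of Theorem~\ref{T:p-adic Cauchy}. It cites Lutz's original 1937 argument and two propositions in \cite{kedlaya-book}. Your direct majorant estimate via the scalar recurrence is essentially the classical approach and is in the spirit of Lutz's proof. The cited results in \cite{kedlaya-book} package the same idea in more structural language (operator-norm bounds for $D$ acting on power series, and Newton polygon techniques), from which positivity of the radius can be read off. Your version has the virtue of being explicit about the constant $|p|^{1/(p-1)}$ and of making clear exactly where the nonarchimedean subtlety enters.
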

\begin{proof}
This result was originally proved by Lutz \cite[Th\'eor\`eme~IV]{lutz} somewhat before the emergence of the general theory of $p$-adic differential equations; Lutz used it as a lemma in her proof of the \emph{Nagell-Lutz theorem} on the integrality of torsion points on rational elliptic curves. One can give several independent proofs using the modern theory;
see \cite[Proposition~9.3.3, Proposition~18.1.1]{kedlaya-book}.
\end{proof}

\begin{defn}
For $i=1,\dots,n-m$, choose $s_i(x) \in \RR$ so that $e^{-s_i(x)}$ is the supremum of the set of $\rho>0$ such that $U^{\an}$ contains the open disc $|z-x| < \rho$ and
$S_x$ contains $n-i+1$ linearly independent elements convergent on this disc. Note that this set is nonempty by Theorem~\ref{T:p-adic Cauchy} and bounded above by the definition of $m$, so the definition makes sense. 
In particular, $s_1(x)$ is the joint radius of convergence of all of the elements of $S_x$, while $s_{n-m}(x)$ is the maximum finite radius of convergence of a nonzero element of $S_x$.

Since $s_1(x) \geq \cdots \geq s_{n-m}(x)$, the $s_i(x)$ are the slopes of a polygon $\calN_z(x) \in \calNP[0,n-m]$,
which we call the \emph{convergence polygon} of \eqref{eq:de} at $x$. 
(We include $z$ in the notation to remind ourselves that $\calN_z$ depends on the choice of the coordinate $z$ of $X$.)
This construction is compatible with base change: if $L'$ is a nonarchimedean field containing $L$ and $x'$ is the image of $x$ in $U(L')$, then $\calN_z(x) = \calN_z(x')$. Consequently, we obtain a well-defined function $\calN_z: U^{\an} \to \calNP[0,n-m]$.
\end{defn}

\begin{defn}
Suppose that $Z \neq \{\infty\}$.
By definition, $e^{-s_1(\calN_z(x))}$ can never exceed the largest value of $\rho$ for which 
the disc $|z-x| < \rho$ does not meet $Z$. When equality occurs, we say that \eqref{eq:de} satisfies the \emph{Robba condition} at $x$.
\end{defn}

\begin{theorem} \label{T:continuous1}
The function $\calN_z: U^{\an} \to \calNP[0,n-m]$ is continuous; more precisely, it factors through the retraction of $\PP^{1,\an}_K$ onto some strict skeleton 
$\Gamma$, and the restriction of $\calN_z$ to each edge of $\Gamma$ is affine with integral derivative.
\end{theorem}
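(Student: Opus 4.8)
The assertion has three linked parts: continuity of $\calN_z$, factorization through a retraction to a strict skeleton $\Gamma$, and affineness with integral derivative along each edge of $\Gamma$. The natural strategy is to reduce everything to the one-dimensional geometry of discs and annuli, where the convergence of power-series solutions is controlled by the classical theory of the generic radius of convergence. I would first set up the basic local picture: for a type-2 or type-3 point $x_\rho$ corresponding to the disc $|z - c| \le \rho$, the slopes $s_i(\calN_z(x_\rho))$ are, up to the $\log$ normalization, the \emph{intrinsic subsidiary radii of convergence} of the connection attached to \eqref{eq:de} at that point, in the sense of \cite[Chapter~9 and Chapter~11]{kedlaya-book}. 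The identification of the radius-of-convergence invariants defined via formal solutions at type-1 points (as in our definition of $\calN_z$) with the generic-disc invariants is exactly the content of the transfer theorems and the comparison of ``true'' and ``generic'' radii there; I would cite these rather than reprove them.

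\textbf{Continuity and piecewise structure along a skeleton.} Fix a strict skeleton $\Gamma$ of $\PP^{1,\an}_K$ whose vertex set contains $Z$ and is large enough that $U^{\an}$ retracts onto $\Gamma$. Each edge $e$ of $\Gamma$ is a virtual segment; after a finite base change it becomes a disjoint union of honest segments, and since $\calN_z$ is insensitive to base change it suffices to treat a segment, which we may coordinatize as $\{x_\rho : \rho \in (\alpha,\beta)\}$ inside an annulus $\alpha < |z| < \beta$. On such an annulus the function $\rho \mapsto s_i(\calN_z(x_\rho))$ is, by the identification above, (a rescaling of) the $i$-th partial height of the radius-of-convergence polygon of a connection on an annulus; by the fundamental results on variation of subsidiary radii --- convexity, piecewise log-affineness, and integrality of slopes (the ``decomposition by spectral radius'' and the integrality theorem, e.g.\ \cite[Chapter~11]{kedlaya-book}; see also \cite{pulita-poineau, pulita-poineau2}) --- each $\log$-graph of $s_i$ is continuous and piecewise affine in $\log \rho$ with integer slopes, and the breakpoints are locally finite. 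A harder input is that the \emph{global} set of breakpoints, over all of $X^{\an}$, is finite: this follows from finiteness theorems for the controlling skeleton of a connection (the existence of a ``triangulation'' adapted to the connection, \cite{baldassarri, pulita-poineau2, kedlaya-radii}), which let us enlarge $\Gamma$ once and for all so that $\calN_z$ has no breakpoints in the interior of any edge beyond those forced by $\Gamma$'s own vertices.

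\textbf{Factorization through the retraction.} To see that $\calN_z$ factors through the retraction $r_\Gamma : \PP^{1,\an}_K \to \Gamma$, it suffices to show $\calN_z$ is constant on each connected component of $\PP^{1,\an}_K \setminus \Gamma$, i.e.\ on each open disc $D$ in $U^{\an}$ with boundary point $r_\Gamma(D) \in \Gamma$. This is precisely where the ``no smaller radius than the obvious geometric one'' phenomenon enters: on an open disc containing no point of $Z$, a variant of the $p$-adic Cauchy theorem (Theorem~\ref{T:p-adic Cauchy}) together with the maximum-modulus behavior of solutions shows that the radii of convergence, measured along the disc toward its boundary, cannot decrease, and by the same convexity/integrality machinery they are forced to be constant once the skeleton has been chosen to absorb all spectral-radius transitions. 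Concretely, on such a $D$ the connection has no breakpoints and the polygon is ``log-affine with slope $0$'', hence constant; pushing to the boundary and using continuity gives $\calN_z|_D \equiv \calN_z(r_\Gamma(D))$. Combining this with the edgewise statement yields continuity of $\calN_z$ on all of $U^{\an}$ and the claimed factorization.

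\textbf{Main obstacle.} The routine parts are the annulus-local convexity and integrality statements, which are quotable. The real work --- and the step I expect to be the crux --- is producing a \emph{single finite} strict skeleton $\Gamma$ that simultaneously (i) contains $Z$, (ii) captures all transitions of every subsidiary radius $s_i$, and (iii) makes $\calN_z$ constant off $\Gamma$. This is a global finiteness statement about the ``controlling graph'' of the connection; it is not formal, and in the cited literature it rests on either a careful analysis via semistable models and Dwork's transfer principle or on the Poineau--Pulita theory of the radii functions on Berkovich curves. Once such a $\Gamma$ is in hand, the remaining assertions --- continuity, factorization through $r_\Gamma$, affineness with integral derivative on edges --- follow by assembling the local pictures, and the integrality of the derivative along each edge is exactly the integer-slope theorem applied segment by segment (with a compatibility check across the base change used to split a virtual segment into segments, which is routine since the relevant slopes are Galois-invariant).
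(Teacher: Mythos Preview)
The paper does not actually prove this theorem: its entire proof is the sentence ``See \cite{pulita-poineau} or \cite{kedlaya-radii} or \cite{baldassarri-kedlaya}.'' Your proposal is therefore more detailed than the paper itself, and it is a fair outline of the strategy one finds in those references: identify the slopes of $\calN_z$ with subsidiary radii in the sense of \cite[Chapters~9--11]{kedlaya-book}, invoke the local convexity and integrality theorems on annuli, and appeal to a global finiteness result for the controlling graph. You also correctly flag that last step as the non-formal heart of the matter.

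One expository wrinkle worth noting: your argument for constancy on discs off $\Gamma$ is slightly circular as written. You assume $\Gamma$ has already been ``chosen to absorb all spectral-radius transitions'' and then deduce constancy on the complementary discs, but the existence of such a $\Gamma$ is precisely the global finiteness statement you have identified as the crux. In the cited sources the logic runs the other way: one proves directly (via transfer and the comparison of true versus generic radii on a disc not meeting $Z$) that the radii at an interior point of such a disc agree with those at its Berkovich boundary point, and this constancy is then an \emph{input} to the construction of the finite controlling skeleton rather than a consequence of it. This is a matter of logical ordering rather than a genuine gap, and your overall identification of the ingredients and the main obstacle is on target.
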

\begin{proof}
See \cite{pulita-poineau} or \cite{kedlaya-radii} or \cite{baldassarri-kedlaya}.
\end{proof}

One can say quite a bit more, but for this it is easier to shift to a coordinate-free interpretation, which also works for more general curves; see \S\ref{sec:convergence general}.

\section{A gallery of examples}
\label{sec:gallery p1}

To help the reader develop some intuition, we collect a few illustrative examples of convergence polygons. Throughout \S\ref{sec:gallery p1}, retain
Hypothesis~\ref{H:convergence p1}.

\begin{example} \label{exa:exponential}
Take $K = \QQ_p$, $Z = \{\infty\}$, and consider the differential equation $y'- y =0$. The formal solutions of this equation with $y \in L\llbracket z-x \rrbracket$ are the scalar multiples of the exponential series
\[
\exp(z-x) = \sum_{i=0}^\infty \frac{(z-x)^{i}}{i!},
\]
which has radius of convergence $p^{-1/(p-1)}$. Consequently,
\[
s_1(\calN_z(x)) = \frac{1}{p-1} \log p;
\]
in particular, $\calN_z$ is constant on $U^{\an}$.
\end{example}

In this next example, we illustrate the effect of changing $Z$ on the convergence polygon.
\begin{example} \label{exa:add pole}
Set notation as in Example~\ref{exa:exponential}, except now with $Z = \{0, \infty\}$. In this case we have
\[
s_1(\calN_z(x)) = \max\left\{-\log |x|, \frac{1}{(p-1)} \log p\right\}.
\]
In particular, $\calN_z$ 
factors through the retraction of $\PP^{1,\an}_K$ onto the path from 0 to $\infty$.
For $x \in U^{\an}$, the Robba condition holds at $x$ if and only if 
$|x| \leq p^{-1/(p-1)}$.
\end{example}

\begin{example} \label{exa:p-th root}
Take $K = \QQ_p$, $Z = \{0, \infty\}$, and consider the differential equation $y'- \frac{1}{p} z^{-1} y = 0$. The formal solutions of this equation with $y \in L\llbracket z-x \rrbracket$ are the scalar multiples of the binomial series
\[
\sum_{i=0}^\infty \binom{1/p}{i} x^{-i+p^{-1}}(z-x)^{i},
\]
which has radius of convergence $p^{-p/(p-1)} |x|$. Consequently,
\[
s_1(\calN_z(x)) = \frac{p}{p-1} \log p - \log |x|,
\]
so again $\calN_z$
factors through the retraction of $\PP^{1,\an}_K$ onto the path from 0 to $\infty$.
In this case, the Robba condition holds nowhere.
\end{example}

\begin{example}
Assume $p > 2$, take $K = \QQ_p$, $Z = \{0, \infty\}$, and consider the Bessel differential equation (with parameter $0$)
\[
y'' + z^{-1} y' + y = 0.
\]
This example was studied by Dwork \cite{dwork-bessel}, who showed that
\[
s_1(\calN_z(x)) = s_2(\calN_z(x)) = 
\max\left\{-\log |x|, \frac{1}{p-1} \log p \right\}.
\]
Again, $\calN_z$ 
factors through the retraction of $\PP^{1,\an}_K$ onto the path from 0 to $\infty$.
As in Example~\ref{exa:add pole},
for $x \in U^{\an}$, the Robba condition holds at $x$ if and only if 
$|x| \leq p^{-1/(p-1)}$.
\end{example}

Our next example illustrates a typical effect of varying a parameter.
\begin{example} \label{exa:logarithmic parameter}
Let $K$ be an extension of $\QQ_p$, take $Z = \{0, \infty\}$, and consider the differential equation $y' - \lambda z^{-1} y = 0$ for some $\lambda \in K$
(the case $\lambda = 1/p$ being Example~\ref{exa:p-th root}).
Then
\[
s_1(\calN_z(x)) = c + \frac{1}{p-1} \log p - \log |x|
\]
where $c$ is a continuous function of 
\[
c_0 = \min\{|\lambda - t|: t \in \ZZ_p\};
\]
namely, by \cite[Proposition~IV.7.3]{dgs} we have
\[
c = \begin{cases}
\log c_0 & \mbox{if $c_0 \geq 1$} \\
- \frac{p^m-1}{(p-1)p^m} \log p + 
\frac{1}{p^{m+1}} \log (p^m c_0) & \mbox{if $p^{-m-1} \leq c_0 \leq p^{-m}$} \qquad (m=0,1,\dots) \\
-\frac{1}{p-1} \log p & \mbox{if $c_0=0$.}
\end{cases}
\]
In particular, the Robba condition holds everywhere if $\lambda \in \ZZ_p$ and nowhere otherwise. In either case, $\calN_z$ factors through the retraction of $\PP^{1,\an}_K$ onto the path from 0 to $\infty$.
\end{example}

\begin{example} \label{exa:exponential2}
Take $K = \QQ_p$, $Z = \{\infty\}$, and consider the differential equation $y'-az^{a-1} y =0$ for some positive integer $a$ not divisible by $p$ (the case $a=1$ being Example~\ref{exa:exponential}). The formal solutions of this equation are the scalar multiples of
\[
\exp(z^a - x^a).
\]
This series converges in the region where $|z^a - x^a| < p^{-1/(p-1)}$; consequently,
\[
s_1(\calN_z(x)) = \max\left\{\frac{1}{p-1} \log p + (a-1) \log |x|, \frac{1}{a(p-1)} \log p\right\}.
\]
In this case, $\calN_z$ factors through the retraction onto the path from $x_{p^{-1/(p-1)}}$ to $\infty$.
\end{example}

\begin{example}
Take $K = \CC((t))$ (so $p=0$), $Z = \{\infty\}$, and consider the differential equation
\[
y''' + z y'' + y = 0.
\]
It can be shown that 
\[
s_1(\calN_z(x)) = \max\left\{0, \log |x| \right\},
s_2(\calN_z(x)) = s_3(\calN_z(x)) = \min\left\{0, - \frac{1}{2} \log |x| \right\}.
\]
In this case, $\calN_z$ factors through the retraction onto the path from $x_1$ to $\infty$. Note that this provides an example where the slopes of $\calN_z(x)$ are not bounded below uniformly on $(\PP^1_K - Z)^{\an}$; that is, as $x$ approaches $\infty$, two linearly independent local solutions have radii of convergence growing without bound, but these local solutions do not patch together.
\end{example}

\begin{example} \label{exa:hypergeometric}
Assume $p > 2$, take $K = \QQ_p$, $Z = \{0, 1, \infty\}$, and consider the Gaussian hypergeometric differential equation 
\[
y'' + \frac{(1-2z)}{z(1-z)} y' - \frac{1}{4z(1-z)} y = 0.
\]
This example was originally studied by Dwork \cite{dwork-cycles} due to its relationship with the zeta functions of elliptic curves. Using Dwork's calculations, it can be shown that
\[
s_1(\calN_z(x)) = s_2(\calN_z(x)) = \max\{ \log |x|, -\log |x|, -\log |x-1|\}.
\]
In this case, $\calN_z$ factors through the retraction from $\PP^{1,\an}_K$
onto the union of the paths from $0$ to $\infty$ and from $1$ to $\infty$,
and the Robba condition holds everywhere.
\end{example}

\begin{remark}
One can compute additional examples of convergence polygons associated to first-order differential equations using an explicit formula for the radius of convergence
at a point, due to Christol--Pulita. This result
was originally reported in \cite{christol} but with an error in the formula;
for a corrected statement, see \cite[Introduction, Th\'eor\`eme~5]{pulita-hdr}.
\end{remark}

\section{Convergence polygons: general curves}
\label{sec:convergence general}

We now describe an analogue of the convergence polygon in a more geometric setting.
\begin{hypothesis} \label{H:convergence general}
Throughout \S\ref{sec:convergence general}, assume that $\chi(U) \leq 0$, i.e., either $g(X) \geq 1$ or $\length(Z) \geq 2$.
Let $\calE$ be a vector bundle on $U$ of rank $n$ equipped with a
connection $\nabla: \calE \to \calE \otimes_{\calO_U} \Omega_{U/K}$.
As is typical, we describe sections of $\calE$ in the kernel of $\nabla$ as being \emph{horizontal}.
\end{hypothesis}

\begin{remark}
For the results in this section, one could also allow $X$ to be an analytic curve which is compact but not necessarily proper. To simplify the discussion, we omit this level of generality; the general results can be found in any of \cite{baldassarri-kedlaya}, \cite{kedlaya-radii}, \cite{pulita-poineau2}.
\end{remark}

\begin{defn} \label{D:convergence polygon general}
Let $L$ be a nonarchimedean field containing $K$ and choose $x \in U(L)$, which we identify canonically with a point of $U_L^{\an}$.
Since $X$ is smooth, $U_{L}^{\an}$ contains a neighborhood of $x$ isomorphic to an open disc over $L$. Thanks to our restrictions on $X$ and $Z$, the union $U_x$ of all such neighborhoods in $U_L^{\an}$ is itself isomorphic to an open disc over $L$.
The construction is compatible with base change in the following sense: if $L'$ is a nonarchimedean field containing $L$, then $U_{x,L'}$ is the union of all neighborhoods of $x$ in $U_{L'}^{\an}$ isomorphic to an open disc over $L'$. 
For each $\rho \in (0,1]$, let $U_{x,\rho}$ be the open disc of radius $\rho$ centered at $x$ within $U_x$ (normalized so that $U_{x,1} = U_x$).

Let $\widehat{\calO}_{X_L,x}$ denote the completed local ring of $X_L$ at $x$; it is abstractly a power series ring in one variable over $L$. 
Let $\calE_x$ denote the pullback of $\calE$ to $\widehat{\calO}_{X_L,x}$,
equipped with the induced connection. One checks easily that $\calE_x$ is a trivial differential module; more precisely, the space $\ker(\nabla, \calE_x)$ is an $n$-dimensional vector space over $L$ and the natural map
\[
\ker(\nabla, \calE_x) \otimes_{L} \widehat{\calO}_{X_L,x} \to \calE_x
\]
is an isomorphism.

For $i=1,\dots,n$, choose $s_i(x) \in [0, +\infty)$ so that $e^{-s_i(x)}$ is the supremum of the set of $\rho \in (0,1]$ such that $\calE_x$ contains $n-i+1$ linearly independent 
sections convergent on $U_{x,\rho}$. Again, this set of such $\rho$ is nonempty by
Theorem~\ref{T:p-adic Cauchy}. Since $s_1(x) \geq \cdots \geq s_n(x)$, the $s_i(x)$ are the slopes of a polygon $\calN(x) \in \calNP[0,n]$, which we call the
\emph{convergence polygon} of $\calE$ at $x$. Again, the construction is compatible with base change, so it induces a well-defined function $\calN: U^{\an} \to \calNP[0,n]$.
\end{defn}

\begin{defn}
For $x \in U^{\an}$, we say that $\calE$ satisfies the \emph{Robba condition} at $x$
if $\calN(x)$ is the zero polygon.
\end{defn}

We have the following analogue of Theorem~\ref{T:continuous1}.
\begin{theorem} \label{T:continuous2}
The function $\calN: (X-Z)^{\an} \to \calNP[0,n]$ is continuous. More precisely, there exists a strict skeleton $\Gamma$ such that $\calN$ factors through the retraction of $X^{\an}$ onto $\Gamma$, and the restriction of $\calN$ to each edge of $\Gamma$ is affine with integral derivative.
\end{theorem}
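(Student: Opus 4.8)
The plan is to reduce Theorem~\ref{T:continuous2} to the already-known projective-line case, Theorem~\ref{T:continuous1}, by a localization argument that compares the intrinsic disc $U_x$ used to define $\calN$ on a general curve with the discs used on $\PP^1$. First I would choose a semistable model of $(X,Z)$ and let $\Gamma_0$ be the corresponding strict skeleton, so that $X^{\an} \setminus \Gamma_0$ is a disjoint union of open discs and open annuli lying entirely in $U^{\an}$. For a point $x$ in one of these discs $D$, the maximal disc $U_x$ of Definition~\ref{D:convergence polygon general} is simply $D$ itself, and choosing a coordinate on $D$ identifies $\calE|_D$ with a differential module over an open disc in $\PP^{1,\an}_K$ having no singularities inside $D$; the convergence polygon $\calN$ computed intrinsically then agrees with the $\PP^1$ convergence polygon computed in that coordinate, so Theorem~\ref{T:continuous1} gives continuity and piecewise affineness with integral derivative on each such disc, and in fact (since $\calE$ extends across the boundary into the annulus) $\calN$ is even constant on the portion of $D$ cut off by any retraction. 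The substantive content is therefore concentrated near $\Gamma_0$: along the annuli and at the type-2 vertices.

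Next I would treat a skeletal annulus $A \cong \{\alpha < |z| < \beta\}$. Here the relevant fact from the theory of $p$-adic differential equations (as developed in \cite{kedlaya-book}, and packaged in \cite{kedlaya-radii}, \cite{pulita-poineau2}, \cite{baldassarri-kedlaya}) is that the partial height functions $\calN(x)(i)$, evaluated along the skeleton of the annulus parametrized by $\log\rho$, are each continuous, convex, piecewise affine with integral slopes, with only finitely many breakpoints; this is exactly the content behind Theorem~\ref{T:continuous1} transported to an annulus via a covering by overlapping discs and the gluing of the generic-radius-of-convergence decomposition. The point is that $\calN(x)$ for $x$ a point of $A$ factors through the retraction of $A$ onto its skeleton, because $U_x$ for such $x$ is again an honest sub-disc whose geometry depends only on the retract of $x$. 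I would then observe that moving $x$ transversally off the skeleton of $A$ into one of the attached residue discs reduces to the disc case already handled, so continuity of $\calN$ on all of $A^{\an}$ follows by combining the two.

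The final step is to splice these pieces together across the finitely many type-2 points of $\Gamma_0$ and to produce the single strict skeleton $\Gamma$ asserted in the statement. At a type-2 vertex $v$, finitely many discs and annuli meet, and I must check that the one-sided limits of $\calN$ along each incident edge agree; this is where continuity is genuinely a theorem rather than a formality, and it follows from the compatibility of the convergence polygon with the residue curve at $v$ (the "transfer" or "continuity at type-2 points" statement in \cite{pulita-poineau2}, \cite{kedlaya-radii}). Having continuity, I define $\Gamma$ to be $\Gamma_0$ together with the finitely many extra vertices coming from the breakpoints of the height functions on each edge; enlarging a strict skeleton by finitely many type-2 points keeps it a strict skeleton, $X^{\an}$ retracts onto it, and by construction $\calN$ is affine with integral derivative on each resulting edge (integrality of the derivative at interior breakpoints and at $i=n$ being precisely the integral-slopes property recorded above, matching the bookkeeping in the definition of "integral derivative").

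The main obstacle I anticipate is precisely the behavior at the type-2 vertices: showing that the intrinsically-defined polygon $\calN$ is continuous there, and that its restriction to each incident edge has the claimed integral derivative, requires knowing that the maximal disc $U_x$ and its sub-discs $U_{x,\rho}$ vary compatibly as $x$ crosses a vertex, which is not visible from the $\PP^1$ picture alone and genuinely uses the structure theory (decomposition by generic radii, the behavior of the "controlling radii" under restriction to residue curves). The disc and annulus cases, by contrast, are essentially direct transcriptions of Theorem~\ref{T:continuous1}. I would also need to verify the compatibility of the intrinsic $U_x$ with base change to a completed algebraic closure in order to reduce to the case $K$ algebraically closed, where the semistable model and the skeleton $\Gamma_{X,Z}$ are available; this is recorded in Definition~\ref{D:convergence polygon general} but should be invoked explicitly.
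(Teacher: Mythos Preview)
The paper does not prove this theorem; it cites \cite{pulita-poineau2}, \cite{kedlaya-radii}, \cite{baldassarri-kedlaya}, with Remark~\ref{R:continuous2} flagging that the full statement (specifically, local constancy at type~4 points) needs the dedicated arguments of \cite{kedlaya-radii}. Your sketch follows the broad shape of the arguments in those references (semistable decomposition into discs and annuli, piecewise affineness along skeleta, gluing at type~2 vertices), but there are two genuine gaps.

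First, your final skeleton $\Gamma$ is obtained from the semistable skeleton $\Gamma_0$ only by subdividing edges at breakpoints; you never add new edges. This presumes that $\calN$ already factors through the retraction onto $\Gamma_0$, i.e., is constant on every open disc $D$ hanging off $\Gamma_0$. You assert this but do not justify it, and the appeal to Theorem~\ref{T:continuous1} does not work: that theorem concerns a differential equation with \emph{rational} coefficients on $\PP^1_K$ having poles in a prescribed finite set $Z$, whereas $\calE|_D$, transported to a disc in $\PP^1_K$, is a connection with merely analytic coefficients and no meromorphic extension across the boundary. Even granting a version of Theorem~\ref{T:continuous1} in that generality, it would only produce \emph{some} skeleton, which could well have branches entering $D$. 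In the actual proofs one needs a separate finiteness input---at each type~2 point only finitely many branches carry nonzero slope---coming from the Laplacian/superharmonicity bounds and the underlying decomposition theorems (cf.\ Theorem~\ref{T:virtual local index} and \cite[Chapter~11]{kedlaya-book}); the skeleton $\Gamma$ is in general strictly larger than any $\Gamma_0$ chosen in advance.

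Second, you do not treat type~4 points. Reducing to $K$ algebraically closed does not eliminate them (they persist whenever $K$ is not spherically complete), and Remark~\ref{R:continuous2} explicitly isolates local constancy of $\calN$ at such points as the part of Theorem~\ref{T:continuous2} not covered by \cite{pulita-poineau2} or \cite{baldassarri-kedlaya}; it requires the specific arguments of \cite{kedlaya-radii}. Your disc-and-annulus picture sees a type~4 point only as a limit inside some $D$, and nothing in the sketch forces $\calN$ to be locally constant there.
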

\begin{proof}
See \cite{pulita-poineau2} or \cite{kedlaya-radii} or \cite{baldassarri-kedlaya}
(and Remark~\ref{R:continuous2}).
\end{proof}

\begin{remark} \label{R:continuous2}
It is slightly inaccurate to attribute Theorem~\ref{T:continuous2} to \cite{pulita-poineau2} or \cite{baldassarri-kedlaya}, as the results proved therein are 
slightly weaker: they require an uncontrolled base extension on $K$, which creates more options for the strict skeleton $\Gamma$.
In particular, Theorem~\ref{T:continuous2} as stated implies that $\calN$ is locally constant around any point of type 4, which cannot be established using the methods of \cite{pulita-poineau2} or \cite{baldassarri-kedlaya}; one instead requires some dedicated arguments found only in \cite{kedlaya-radii}. These extra arguments are crucial for the applications of Theorem~\ref{T:continuous2} in the contexts described in
\cite{kedlaya-goodformal2} and \cite{kedlaya-semistable4}.
\end{remark}

\begin{remark} \label{R:p1 case}
Suppose that $X = \PP^1_K$ and $\infty \in Z$. Given a differential equation as in 
\eqref{eq:de}, we can construct an associated connection $\calE$ of rank $n$ whose underlying vector bundle is free on the basis $\be_1,\dots,\be_n$ and whose action of $\nabla$ is given by
\begin{align*}
\nabla(\be_1) &= f_0(z) \be_n \\
\nabla(\be_2) &= f_1(z)\be_n -\be_1\\
\vdots \\
\nabla(\be_{n-1}) &= f_{n-2}(z) \be_n - \be_{n-2} \\
\nabla(\be_n) &= f_{n-1}(z) \be_n - \be_{n-1}.
\end{align*}
A section of $\calE$ is then horizontal if and only if it has the form
$y \be_1 + y' \be_2 + \cdots + y^{(n-1)} \be_n$ where $y$ is a solution of \eqref{eq:de}. 
If $\length(Z) \geq 2$, each of $\calN_z$ and $\calN$ can be computed in terms of the other; this amounts to changing the normalization of certain discs. In particular,
the statements of Theorem~\ref{T:continuous1} and Theorem~\ref{T:continuous2} in this case are equivalent.

If $\length(Z) = 1$, we cannot define $\calN$ as above. However, 
if $K$ is nontrivially valued,
one can recover the properties of $\calN_z$ by considering $\calN$ with $Z$ replaced by $Z \cup \{x\}$ for some $x \in U(K)$ with $|x|$ sufficiently large (namely, larger than the radius of convergence of any nonentire formal solution at $0$). We refer to
\cite{baldassarri-kedlaya} for further details.
\end{remark}

\begin{remark} \label{R:p1 connection}
One can extend Remark~\ref{R:p1 case} by defining $\calN_z$ in the case where
$X = \PP^1_K$ and $\infty \in Z$, and using Theorem~\ref{T:continuous2} to establish an analogue of Theorem~\ref{T:continuous1}.
With this modification, we still do not define either $\calN$ or $\calN_z$ in the case where $X = \PP^1_K$ and $Z = \emptyset$, but this case is completely trivial: the vector bundle $\calE$ must admit a basis of horizontal sections
(see \cite{baldassarri-kedlaya}).
\end{remark}

\begin{theorem} \label{T:transfer}
Suppose that $x \in \Gamma \cap 
U^{\an}$ is the generic point of a open disc $D$ contained in $X$
and the Robba condition holds at $x$.
\begin{enumerate}
\item[(a)]
If $D \cap Z = \emptyset$, then the restriction of $\calE$ to $D$ is trivial (i.e., it admits a basis of horizontal sections).
\item[(b)]
If $D \cap Z$ consists of a single point $z$ at which $\nabla$ is regular, then
the Robba condition holds on $D - \{z\}$.
\end{enumerate}
\end{theorem}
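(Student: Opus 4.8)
The plan is to reduce both parts to a statement about convergence on the open disc $D$ itself, exploiting the fact that the generic point $x$ of $D$ "sees" the convergence behaviour throughout $D$. Recall that for a generic point of a disc, the radius of convergence of a horizontal section is governed by a maximum-modulus principle: a section convergent on the sub-disc of radius $\rho$ around $x$ is, up to the normalization built into Definition~\ref{D:convergence polygon general}, controlled by the spectral norm of $\nabla$ acting near $x$. The Robba condition at $x$ says $\calN(x)$ is the zero polygon, i.e. all $s_i(x) = 0$, which in the present normalization means the full space $\ker(\nabla, \calE_x)$ consists of sections convergent on all of $U_x$.

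For part~(a), since $D \cap Z = \emptyset$, the disc $D$ is contained in $U$, and $U_x = D$ (the maximal open disc neighbourhood of $x$ inside $U_L^{\an}$ is exactly $D$ itself once $D$ avoids $Z$). The Robba condition then furnishes $n$ linearly independent horizontal sections of $\calE_x$ that converge on all of $U_{x,1} = D$. These give $n$ horizontal sections of $\calE$ over $D$; I would check they remain linearly independent over the ring of analytic functions on $D$ (linear independence over $L$ at the generic point plus the fact that a nonzero analytic function on a disc is nonvanishing at the generic point, together with the Wronskian being horizontal hence either identically zero or nowhere zero), and conclude $\calE|_D$ is trivial. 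The only subtlety is matching normalizations: one must confirm that "convergent on $U_{x,\rho}$" with $\rho=1$ really does mean convergent on the whole of $D$, which is precisely how $U_{x,1}$ was set up.

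For part~(b), $D$ contains a single point $z \in Z$ at which $\nabla$ is regular. Now $U_x = D - \{z\}$ is not a disc but a punctured disc (an annulus), yet the completed local ring construction still applies at $x$, and the Robba condition at $x$ again gives $n$ horizontal sections convergent on $U_x$, i.e. on $D - \{z\}$. For any $x' \in D - \{z\}$, the maximal open disc around $x'$ inside $U$ is contained in $D - \{z\}$, and the $n$ sections above restrict to it as a full basis of horizontal sections; hence $s_i(x') = 0$ for all $i$, which is the Robba condition at $x'$. The one genuine point requiring the hypothesis that $\nabla$ is \emph{regular} at $z$: without it, the horizontal sections convergent on the annulus might not extend to the disc in a way that controls all of $D - \{z\}$ — regularity ensures (via the local structure of regular connections, e.g. \cite[\S7]{kedlaya-book}) that the solution space at the generic point of the annulus genuinely governs the solutions at every interior point and that no "hidden" pole-type obstruction appears as one moves toward $z$.

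The main obstacle I anticipate is the bookkeeping around the normalization of radii in Definition~\ref{D:convergence polygon general}: the disc $U_x$ is rescaled so that $U_{x,1} = U_x$, and one must be careful that "the Robba condition at $x$" (zero polygon) is correctly translated into "horizontal sections convergent on the full normalized disc," and then, when passing to a nearby point $x'$, that the inclusion of discs $U_{x'} \subseteq U_x$ is compatible with these normalizations so that convergence is preserved. Once this compatibility is pinned down, both parts follow from the maximum-modulus/transfer principle for differential modules on discs and annuli as developed in \cite{kedlaya-book}.
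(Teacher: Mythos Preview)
Your proposal has a genuine gap in both parts, centered on the identification of $U_x$ with $D$ (or $D \setminus \{z\}$). Recall from Definition~\ref{D:convergence polygon general} that to form $U_x$ one first passes to $L = \calH(x)$ and lifts $x$ to its tautological rational point $\tilde{x} \in U(L)$; then $U_x$ is the maximal open disc in $U_L^{\an}$ about $\tilde{x}$. This is a disc over $L$, and $\tilde{x}$ does \emph{not} lie in the base change $D_L$: for instance with $X = \PP^1_K$, $Z = \{0,\infty\}$, $x = x_1$, $D = \{|z| < 1\}$, the tautological lift has $|z(\tilde{x})| = 1$, so $\tilde{x}$ sits on the boundary of $D_L$ and $U_x = \{|z - \tilde{z}| < 1\}$ is a separate disc. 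The Robba condition at $x$ hands you horizontal sections over $L$ on $U_x$; it does not directly produce sections over $K$ on $D$. The passage from the generic disc $U_x$ to the specific disc $D$ is precisely the Dwork transfer theorem (\cite[Theorem~9.6.1]{kedlaya-book}), which the paper simply cites for part~(a): one bounds the spectral norm of the connection operator via the generic point, and this bound then controls the Taylor coefficients of formal horizontal sections at every $K$-point of $D$. That is the substance of the result, not a normalization detail.

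For part~(b), your assertion that $U_x = D - \{z\}$ is a punctured disc contradicts Definition~\ref{D:convergence polygon general}, which guarantees $U_x$ is always an open disc; in the example above with $z = 0$, the disc $U_x$ simply avoids $0$ because $|\tilde{z}| = 1$. The paper instead refers to the argument of \cite[Theorem~13.7.1]{kedlaya-book}, a Christol-type transfer for regular singularities (compare Remark~\ref{R:p-adic Liouville numbers}): one must propagate the Robba condition along the segment from $x$ toward $z$, and regularity at $z$ is what prevents the normalized radii from dropping near the singularity. You correctly sense in your closing paragraph that transfer principles are in play, but they are the proof itself rather than a compatibility check following an identification $U_x = D$ that does not hold.
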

\begin{proof}
Part (a) is a special case of the Dwork transfer theorem; see for instance
\cite[Theorem~9.6.1]{kedlaya-book}.
Part (b) follows as in the proof of \cite[Theorem~13.7.1]{kedlaya-book}.
\end{proof}

\begin{remark} \label{R:p-adic Liouville numbers}
Theorem~\ref{T:transfer}(b) is a variant of a result of Christol, which has a slightly stronger hypothesis and a slightly stronger conclusion. In Christol's result, one must assume either that $p=0$, or that $p>0$ and the pairwise differences between the exponents of $\nabla$ at $z$ are not $p$-adic Liouville numbers (see Example~\ref{exa:Liouville}). One however gets the stronger conclusion that the ``formal solution matrix'' of $\nabla$ at $z$ converges on all of $D$. Both parts of Theorem~\ref{T:transfer} are examples of
\emph{transfer theorems}, which can be viewed as transferring convergence information from one disc to another.
\end{remark}

\begin{remark} \label{R:parameters}
Let $k$ be the residue field of $K$.
Suppose that $X = \PP^1_K$, $\nabla$ is regular everywhere,
and the reduction map from $Z$ to $\PP^1_k$ is injective. Then
Theorem~\ref{T:transfer} implies that if the Robba condition holds at $x_1$, then it holds on all of $U^{\an}$. For instance, this is the case for (the connection
associated via Remark~\ref{R:p1 case} to) the
hypergeometric equation 
considered in Example~\ref{exa:hypergeometric}; more generally, it holds for the
hypergeometric equation
\[
y'' - \frac{c-(a+b+1)z}{z(1-z)} y' - \frac{ab}{z(1-z)} y = 0
\]
if and only if $a,b,c \in \ZZ_p$ (the case of Example~\ref{exa:hypergeometric} being
$a=b=1/2, c=1$). 
This example and Example~\ref{exa:logarithmic parameter}, taken together, suggest that
for a general differential equation with one or more accessory parameters, the Robba condition at a fixed point is likely to be of a ``fractal'' nature in these parameters.
For some additional examples with four singular points, see the work of Beukers
\cite{beukers}.
\end{remark}

\begin{remark}
One can also consider some modified versions of the convergence polygon. For instance, one might take $e^{-s_i(x)}$ to be the supremum of those $\rho \in (0,1]$ such that
the restriction of $\calE$ to $U_{x,\rho}$ splits off a trivial submodule of rank at least $n-i+1$; the resulting convergence polygons will again satisfy Theorem~\ref{T:continuous2}. It may be that some modification of this kind can be used to eliminate some hypotheses on $p$-adic exponents, as in
Theorem~\ref{T:virtual local index}.
\end{remark}

\section{Derivatives of convergence polygons}
\label{sec:slopes}

We now take a closer look at the local variation of convergence polygons.
Throughout \S\ref{sec:slopes}, continue to retain Hypothesis~\ref{H:convergence general}.

\begin{defn} \label{D:branch}
For $x \in X^{\an}$,
a \emph{branch} of $X$ at $x$ is a local connected component of $X - \{x\}$, that is,
an element of the direct limit of $\pi_0(U - \{x\})$ as $U$ runs over all neighborhoods of $x$ in $X$. Depending on the type of $x$, the branches of $X$ can be described as follows.
\begin{enumerate}
\item[Type 1:]
A single branch.
\item[Type 2:]
One branch corresponding to each closed point on the curve $C_x$ (defined over the residue field of $K$) whose function field is the residue field of $\calH(x)$.
\item[Type 3:]
Two branches.
\item[Type 4:]
One branch.
\end{enumerate}
For each branch $\vec{t}$ of $X$ at $x$, by Theorem~\ref{T:continuous2} we may define the \emph{derivative} of $\calN$ along $\vec{t}$ (away from $x$), as an element of $\calP[0,n]$ with integral vertices;
we denote this element by $\partial_{\vec{t}}(\calN)$.
For $x$ of type 1, we also denote this element by $\partial_x(\calN)$ since there is no ambiguity about the choice of the branch.
We may similarly define $\partial_{\vec{t}}(h_i(\calN)) \in \RR$ for $i=1,\dots,n$,
optionally omitting $i$ in the case $i=n$; note that $\partial_{\vec{t}}(h(\calN)) \in \ZZ$.
\end{defn}

\begin{theorem} \label{T:Turrittin}
For $z \in Z$, $-\partial_{z}(\calN)$ is the polygon associated to the 
Turrittin-Levelt-Hukuhara decomposition of $\calE_z$
(see \cite[Chapter~4]{dgs}, \cite[\S 11]{katz-turrittin}, or \cite[Chapter~7]{kedlaya-book}). In particular,
this polygon belongs to $\calNP[0,n]$,
its slopes are all nonnegative,
and its height equals the irregularity $\Irr_z(\nabla)$ of $\nabla$ at $z$.
\end{theorem}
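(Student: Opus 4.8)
The plan is to reduce the assertion to an explicit local computation at $z$, built on the formal structure theory of meromorphic connections. Since $z$ is of type~$1$ it has a single branch $\vec{t}$, and $\partial_z(\calN)$ depends only on the germ of $\calE$ near $z$; so I would first replace $X$ by a small disc around $z$ and fix a coordinate $t$ with $z = \{t = 0\}$. The branch $\vec{t}$ is then realized by the Gauss points $x_{0,r}$ of the discs $\{|t| \le r\}$ as $r \to 0^{+}$, the maximal disc $U_{x_{0,r}}$ inside $U^{\an}$ has radius $r$, and (Theorem~\ref{T:continuous2}) $\calN(x_{0,r})$ is affine in $\log r$ for $r$ small, with $-\partial_z(\calN) = -\lim_{r \to 0^{+}} \frac{d}{d\log r}\calN(x_{0,r})$ in the arc-length parametrization of the branch. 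By the Levelt--Turrittin--Hukuhara decomposition (\cite[Ch.~4]{dgs}, \cite[\S11]{katz-turrittin}, \cite[Ch.~7]{kedlaya-book}), after enlarging $K$ to a finite extension and replacing $t$ by $t^{1/e}$ for a suitable $e$ prime to $p$, the restriction of $\calE$ to a punctured disc $0 < |t| < \epsilon$ becomes isomorphic to $\bigoplus_j \mathcal{L}_{\phi_j} \otimes \calR_j$, where $\mathcal{L}_{\phi_j} = (\calO, d + d\phi_j)$ is rank one with $\phi_j \in t^{-1}K[t^{-1}]$ (now of integral pole order) and each $\calR_j$ is regular singular; the multiset consisting of the integers $b_j := -\operatorname{ord}_t(\phi_j)$, each repeated $\rank \calR_j$ times, is (after sorting) the slope multiset of the polygon attached to the decomposition, whose total height is $\Irr_z(\nabla)$.

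Next I would check that $-\partial_z(\calN)$ is compatible with the three operations just used. For a direct sum, the space of horizontal sections of $\calE \oplus \calE'$ convergent on a given subdisc of $U_x$ is the direct sum of the corresponding spaces for $\calE$ and $\calE'$; hence the slope multiset of $\calN_{\calE \oplus \calE'}(x)$, and therefore of $-\partial_z(\calN_{\calE \oplus \calE'})$, is the union of those of the two summands. The finite extension of $K$ changes neither $\calN$ nor the relevant skeleton (Definitions~\ref{D:convergence polygon general} and~\ref{D:minimal strict skeleton}). For the cover $f\colon t \mapsto t^e$, which is \'etale away from $z$: at a point $x'$ lying over $x \neq z$ the map $f$ carries $U_{x'}$ onto $U_x$ isometrically once the radius renormalizations are taken into account, so $\calN_{f^{*}\calE} = \calN_{\calE} \circ f$ on the punctured disc; since $f$ multiplies the arc-length element $|d\log t|$ by $e$, the derivative along $\vec t$ scales by $e$, in exact agreement with the TLH slopes scaling by $e$ under $f^{*}$. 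Thus it suffices to compute $-\partial_z(\calN)$ for each factor $\mathcal{L}_{\phi_j} \otimes \calR_j$ separately, with $b_j$ an integer.

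Finally I would carry out the two model computations. For a regular singular module (the case $b_j = 0$): after a further finite base change it has a basis of horizontal sections of the form $t^{\lambda}(\log t)^{k} u(t)$ with $u$ a unit, and a Taylor-expansion estimate (generalizing Example~\ref{exa:logarithmic parameter}; see \cite[Ch.~13]{kedlaya-book}) shows that each such section converges on a subdisc of $U_{x_{0,r}}$ of radius $\Theta(r)$; hence the normalized convergence polygon is bounded near $z$, and being affine with integral derivative on a branch of infinite length it is eventually constant, so its derivative vanishes. For $\mathcal{L}_{\phi_j}$ with $b_j \ge 1$: the horizontal section is $\exp(\phi_j(x) - \phi_j(t))$, and expanding $\phi_j(t)$ about $x$ --- the leading term $c\,t^{-b_j}$ dominating as $r \to 0$ --- one finds the radius of convergence at $x_{0,r}$ is asymptotic to a constant times $r^{b_j + 1}$, hence after normalization to a constant times $r^{b_j}$; so $\calN_{\mathcal{L}_{\phi_j}}(x_{0,r}) = -b_j \log r + O(1)$ and $-\partial_z(\calN_{\mathcal{L}_{\phi_j}})$ is the polygon with the single slope $b_j$ (with multiplicity $\rank\calR_j$ in the tensored module, since the extra regular factors have strictly larger normalized radii). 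Assembling via the direct-sum rule, $-\partial_z(\calN)$ has slope multiset equal to that of the TLH polygon; concavity is then automatic (the slopes, so arranged, are nonincreasing, which also follows from $s_1 \ge \cdots \ge s_n$ being affine in $\log r$), the slopes are all $\ge 0$, and the total height is $\sum_j b_j \rank\calR_j = \Irr_z(\nabla)$. I expect the main obstacle to be the uniform control, as $r \to 0$, of the radius of convergence of $\exp(\phi_j(x) - \phi_j(t))$, i.e.\ the $p$-adic estimates for the factorial and binomial factors in its Taylor coefficients; this is precisely the kind of bound underlying the Dwork--Robba theory, so in a full write-up I would invoke \cite[Ch.~11--13]{kedlaya-book} rather than redo it.
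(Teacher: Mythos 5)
The paper itself gives no proof of Theorem~\ref{T:Turrittin}; it defers to \cite[\S 5.7]{pulita-poineau3}, \cite[\S 3.6]{pulita-poineau4} and \cite{baldassarri-kedlaya}, where the result is obtained as part of a broader theory (decomposition over annuli, potential-theoretic control of the radius function). Your direct local computation via the formal Turrittin--Levelt--Hukuhara decomposition lifted to a punctured disc (Clark's theorem) is a genuinely more hands-on route: you reduce to elementary factors $\mathcal{L}_{\phi_j}\otimes\calR_j$, verify functoriality of $\partial_z\calN$ under direct sum, field extension and ramified pullback, and then read off the slopes from the Dwork--Robba estimate for $\exp(\phi_j(x)-\phi_j(t))$ and the boundedness of the normalized polygon in the regular singular case. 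That plan is sound and does recover all four assertions.

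Two places need tightening. First, you assume the ramification index $e$ is prime to $p$, which fails in general when $p\le n$: the denominators of the Turrittin slopes need not be coprime to $p$. When $p\mid e$, the map $s\mapsto s^e$ is \emph{not} an isometry on normalized radii near $z$ --- it contracts the normalized radius by the fixed factor $|e|<1$, and the pointwise identity ``$D(x',r)\mapsto D(x,|e|\,|x'|^{e-1}r)$'' only holds for $r$ well below $|x'|$ (cf.\ Example~\ref{exa:cyclic1}). Fortunately the distortion is a multiplicative constant independent of $|x'|$, so it contributes an additive constant to $\calN$ and drops out of $\partial_z(\calN)$; and the restricted range of $r$ is adequate because the irregular factors have normalized radii $\sim|x'|^{b_j}\to 0$, while the regular singular part is handled separately. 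You should state this rather than claim isometry. Second, the regular singular case should not be argued from a basis of the literal form $t^{\lambda}(\log t)^{k}u(t)$ --- that is the complex structure theorem, and $\log t$ has no global $p$-adic meaning; what you actually need is Clark's convergence theorem for the formal solution matrix together with the $p$-adic radius estimate for the Taylor expansions of $t^{\lambda}$ and $\log t$ at $x$ (as in Example~\ref{exa:logarithmic parameter}), which give a normalized radius bounded below uniformly in $x$. Your observation that a bounded, piecewise-affine function with integral derivative on an infinite-length branch must be eventually constant then finishes that case cleanly and is the right way to see that the slope there is $0$.
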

\begin{proof}
See \cite[\S 5.7]{pulita-poineau3} and \cite[\S 3.6]{pulita-poineau4}, or see \cite{baldassarri-kedlaya}. 
\end{proof}

\begin{cor}
For $z \in Z$, $\calN$ extends continuously to a neighborhood of $z$
if and only if $\nabla$ has a regular singularity at $z$ (i.e., its irregularity at $z$ equals $0$).
In particular, $\calN$ extends continuously to all of $X^{\an}$ if and only if $\nabla$ is everywhere regular.
\end{cor}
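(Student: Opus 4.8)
The plan is to deduce this directly from Theorem~\ref{T:Turrittin} together with Theorem~\ref{T:continuous2}. The statement is local at each $z \in Z$, so fix such a $z$ and let $\vec{t}$ be the unique branch of $X$ at $z$ (which is of type~1). By Theorem~\ref{T:continuous2}, near $z$ the function $\calN$ factors through the retraction onto a strict skeleton $\Gamma$, and on the edge $e$ of $\Gamma$ abutting $z$ the restriction $\calN|_e$ is affine with integral derivative; write its slope (in the outward direction from $z$) as $-\partial_z(\calN)$ in the notation of Definition~\ref{D:branch}. Parametrize $e$ by the radial coordinate so that $z$ sits at the endpoint; then for $x$ on $e$ close to $z$ we have $\calN(x) = \calN(x_0) + (\text{distance})\cdot(-\partial_z(\calN))$ for a reference point $x_0$. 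Thus $\calN$ extends continuously across $z$ (necessarily by the value $\calN(x_0) - (\text{distance to }x_0)\cdot(-\partial_z(\calN))$, i.e.\ by a finite limit along $e$) if and only if this affine function has a finite limit as one approaches $z$, which happens automatically for an affine function on a bounded edge — so the real content is whether the limiting polygon is a genuine element of $\calNP[0,n]$, equivalently whether $\calN$ stays bounded, equivalently whether the total height $h(\calN)$ stays bounded as $x \to z$.

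Here is where Theorem~\ref{T:Turrittin} enters: it identifies $-\partial_z(\calN)$ as the polygon attached to the Turrittin--Levelt--Hukuhara decomposition of $\calE_z$, whose total height is the irregularity $\Irr_z(\nabla)$. If $\nabla$ has a regular singularity at $z$, then $\Irr_z(\nabla) = 0$; since the slopes of $-\partial_z(\calN)$ are all nonnegative and sum to $0$, the polygon $-\partial_z(\calN)$ is identically the zero polygon, so $\calN$ is locally constant on $e$ near $z$ and therefore extends continuously to $z$ by that constant value. Conversely, if $\Irr_z(\nabla) > 0$, then $-\partial_z(\calN)$ has a strictly positive total height, so $h(\calN)$ grows linearly without bound as $x \to z$ along $e$; hence $\calN$ cannot extend continuously to $z$ (no limit in $\calNP[0,n]$ exists). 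This proves the local equivalence at each $z \in Z$.

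For the global statement: $\calN$ is already defined and continuous on $(X - Z)^{\an}$ by Theorem~\ref{T:continuous2}, and $X^{\an} = (X-Z)^{\an} \cup Z$ with $Z$ finite; a function defined on $X^{\an} - Z$ extends continuously to all of $X^{\an}$ if and only if it extends continuously across each point of $Z$ separately (using that $X^{\an}$ is locally compact Hausdorff and each $z \in Z$ has a neighborhood basis of connected opens meeting $Z$ only in $z$). Combining with the local equivalence just established, $\calN$ extends continuously to $X^{\an}$ if and only if $\Irr_z(\nabla) = 0$ for every $z \in Z$, i.e.\ $\nabla$ is everywhere regular.

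The main obstacle is essentially bookkeeping rather than mathematical depth: one must be careful that "extends continuously to a neighborhood of $z$" is phrased correctly, namely that the obstruction is exactly unboundedness of the polygon-valued function and not some subtler failure of continuity, and that the one nonzero slope of $-\partial_z(\calN)$ being positive genuinely forces $h(\calN) \to \infty$ (which is immediate from affineness along $e$ with a strictly positive derivative). All the substantive input — the identification of the derivative at $z$ with the irregularity polygon, and the affine-with-integral-derivative behavior along edges — is already supplied by Theorems~\ref{T:Turrittin} and~\ref{T:continuous2}.
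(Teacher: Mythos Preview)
Your argument is correct and is exactly the intended one: the paper states this corollary without proof, as an immediate consequence of Theorem~\ref{T:Turrittin} (together with the affine structure from Theorem~\ref{T:continuous2}), and your write-up simply spells out that deduction. One phrasing to fix: the clause ``which happens automatically for an affine function on a bounded edge'' is misleading, since in the natural affine coordinate $r = -\log \rho$ the edge of $\Gamma$ abutting a type~1 point $z$ is \emph{unbounded} (as $\rho \to 0$ we have $r \to +\infty$); you handle this correctly immediately afterward when you observe that the real question is whether $h(\calN)$ stays bounded, so just delete or rewrite that clause.
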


\begin{remark} \label{R:asympotic irregularity}
Using a similar technique, one can compute the asymptotic behavior of $\calN$ in a neighborhood of $z \in Z$ in terms of the ``eigenvalues'' occurring in the 
Turrittin-Levelt-Hukuhara decomposition of $\calE_z$.
For example, $\nabla$ satisfies the Robba condition on some neighborhood of $z$ if and only if $\nabla$ is regular at $z$ with all exponents in $\ZZ_p$.
\end{remark}

\begin{remark}
In the complex-analytic setting, the decomposition of $\calE_z$ does not typically extend to any nonformal neighborhood of $z$; this is related to the discrepancy between local indices in the algebraic and analytic categories (see Remark~\ref{R:index conditions}).
Nonetheless, there are still some close links between the formal decomposition and the asymptotic behavior of local solutions on sectors at $z$ (related to the theory of \emph{Stokes phenomena}).

In the nonarchimedean setting, by contrast, the decomposition of $\calE_z$ always lifts to some nonformal neighborhood of $z$; this follows from a theorem of Clark \cite{clark}, which asserts that formal solutions of a connection at a (possibly irregular) singular point always converge in some punctured disc.
\end{remark}

\begin{theorem} \label{T:monotonicity}
Assume $p=0$.
For $x \in U^{\an}$ and $\vec{t}$ a branch of $X$ at $x$ not pointing along $\Gamma_{X,Z}$, we have $\partial_{\vec{t}}(\calN) \leq 0$.
\end{theorem}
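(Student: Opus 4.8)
The plan is to reduce the statement to a purely local analysis on a disc and then invoke the transfer theorem. Fix $x \in U^{\an}$ and a branch $\vec t$ not pointing along $\Gamma_{X,Z}$. By the description of branches in Definition~\ref{D:branch} and the construction of $\Gamma_{X,Z}$ in Definition~\ref{D:minimal strict skeleton}, such a branch $\vec t$ points into an open disc $D \subseteq U^{\an}$ (over a suitable finite extension of $K$, which we may assume we have made after checking that the derivative is insensitive to base change). After passing to this extension we may take $x$ itself to be the generic point $x_\rho$ of $D$ for an appropriate radius, and the branch $\vec t$ corresponds to moving toward the smaller discs inside $D$. So I want to show: along a chain of generic points of shrinking subdiscs of $D$, the convergence polygon $\calN$ is nonincreasing (as a function to $\calNP[0,n]$, i.e. $h_i(\calN)$ is nonincreasing in each coordinate).

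The key mechanism is the following monotonicity principle for radii of convergence on a disc, which is exactly where $p=0$ enters. For a differential module on an open disc $D$, the generic radius of convergence at an interior point $y$ of $D$, measured intrinsically, is controlled by the Young/Newton-polygon formalism of \cite[Chapter~11]{kedlaya-book}: the partial heights of the ``intrinsic subsidiary radii'' polygon are concave and piecewise affine as functions of $-\log$ of the radius of the subdisc, and — crucially when $p=0$ — there is no spurious contribution from small subdiscs because the Robba condition is automatic once one is ``small enough'' relative to the singular locus. Concretely, I would argue as follows. First, if $D$ contains no point of $Z$ and $\calE|_D$ is already trivial, then $\calN$ is identically zero on the whole branch, and $\partial_{\vec t}(\calN) = 0 \leq 0$, done. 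Otherwise, choose a point $y$ far out along the branch $\vec t$ (a small subdisc); I claim that on such a small subdisc the Robba condition holds, i.e. $\calN(y) = 0$. Granting this, since $\calN$ is affine on the edge of $\Gamma$ containing the branch (Theorem~\ref{T:continuous2}) and nonnegative everywhere (values lie in $\calNP[0,n]$, whose slopes and hence heights are $\geq 0$), an affine function on an interval that is $\geq 0$ throughout and equals $0$ at the far endpoint must be nonincreasing toward that endpoint; hence $\partial_{\vec t}(\calN) \leq 0$.

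So the heart of the matter is: \emph{when $p = 0$, the Robba condition holds on all sufficiently small subdiscs along any branch not pointing along $\Gamma_{X,Z}$}. When $D \cap Z = \emptyset$ this is the Dwork transfer statement Theorem~\ref{T:transfer}(a) combined with continuity: actually more simply, for $p=0$ every connection on an open disc is trivial (there are no convergence obstructions in the $p=0$ theory — this is the $p=0$ avatar of the Cauchy theorem with optimal radius), so $\calN$ vanishes on the entire branch. When $D$ meets $Z$ in a single point $z$, then since $p=0$ the singularity at $z$ is automatically ``non-Liouville'' (there are no $p$-adic Liouville numbers when $p=0$), so by the mechanism behind Theorem~\ref{T:transfer}(b) — or by the full strength of the $p=0$ transfer theorem in \cite[Chapter~18 / Theorem~13.7.1]{kedlaya-book} applied after formally decomposing at $z$ and handling the regular part via its exponents and the irregular part via the fact that its convergence polygon near $z$ is governed by Theorem~\ref{T:Turrittin} with slopes decaying into the disc — the convergence polygon tends to $0$ as one approaches $z$ along the branch. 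In either case $\calN$ is affine and nonnegative on the edge and hits $0$ in the limit, giving $\partial_{\vec t}(\calN) \leq 0$.

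The main obstacle I anticipate is the bookkeeping in the case $D \cap Z = \{z\}$ with $\nabla$ irregular at $z$: there one cannot simply say ``$\calN \equiv 0$,'' and one must combine the Turrittin–Levelt–Hukuhara decomposition (Theorem~\ref{T:Turrittin}, which tells us $-\partial_z(\calN)$ has nonnegative slopes summing to the irregularity) with the $p=0$ transfer theorem applied to the regular part of the decomposition, then patch these to see that the total $\calN$, restricted to the branch toward $z$, is an affine function that is nonincreasing. A secondary subtlety is making sure the reduction to $K$ algebraically closed (so that all branches not along $\Gamma_{X,Z}$ genuinely point into honest discs) is harmless: this follows because $\partial_{\vec t}(\calN)$ is computed from $\calN$ on a segment, $\calN$ is compatible with base change (Definition~\ref{D:convergence polygon general}), and the minimal skeleton behaves well under algebraic extensions of $K$ (Definition~\ref{D:minimal strict skeleton}). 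Modulo these points, everything is a consequence of results already quoted.
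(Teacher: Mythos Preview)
Your reduction to a local picture on a disc is correct and matches the paper's setup in Appendix~\ref{sec:convexity}. The fatal gap is the assertion that ``for $p=0$ every connection on an open disc is trivial,'' equivalently that the Robba condition eventually holds along any off-skeleton branch. This is simply false. Take $K = \CC((t))$ and the rank-one connection $\nabla(\bv) = \bv \otimes dz$ on a disc of radius $R > 1$: since $|n!| = 1$ for $p=0$, the exponential $\sum (z-a)^n/n!$ converges only for $|z-a| < 1$, so this connection is not trivial on the disc and its convergence polygon is strictly positive. The paper's own example over $\CC((t))$ with $Z = \{\infty\}$ exhibits a nonzero $\calN$ of this kind. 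So neither the strong claim (Robba everywhere) nor the weak claim ($\calN \to 0$ far along the branch) is available, and your ``affine, nonnegative, zero at the far end'' argument collapses. A separate problem: even granting eventual vanishing, $\calN$ is affine only on edges of the larger skeleton $\Gamma$ of Theorem~\ref{T:continuous2}, and that skeleton may branch inside your disc $D$, so affineness plus a far-away zero would not control the slope on the \emph{first} segment.

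The paper's actual proof (Lemma~\ref{L:monotonic2a} and Theorem~\ref{T:monotonicity appendix2}) takes a completely different route, the one alluded to in the remark following the theorem statement. For the total height $h_n$, one invokes the local index formula (Lemma~\ref{L:H1 slope}): the right slope of $G_n(M,r)$ equals $-\dim_K H^1(M) \leq 0$. For a partial height $h_i$ with $i < n$, one twists $M$ by a rank-one exponential module $N_\lambda$ with $\nabla(\bv) = \lambda\bv$, choosing $|\lambda|$ so that $g_1(N_\lambda,r)$ lands strictly between $g_i(M,r)$ and $g_{i+1}(M,r)$; this freezes $g_j(M_\lambda,r)$ for $j \leq i$ while forcing $g_j(M_\lambda,r) = g_1(N_\lambda,r)$ (constant in $r$) for $j > i$, reducing the partial case to the total-height case for $M_\lambda$. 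Neither ingredient --- the index interpretation of the slope nor the twisting trick --- appears in your proposal, and at least one of them (or a genuine substitute) is needed, because monotonicity of $h_i$ for $i < n$ is \emph{not} a consequence of the Dwork transfer theorem alone.
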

\begin{proof}
This requires somewhat technical arguments not present in the existing literature;
Theorem~\ref{T:monotonicity appendix2}.
\end{proof}

\begin{remark}
In the setting of Theorem~\ref{T:monotonicity}, the statement that $\partial_{\vec{t}}(h_1(\calN)) \leq 0$ is equivalent to the Dwork transfer theorem
(again see \cite[Theorem~9.6.1]{kedlaya-book}).
In general, Theorem~\ref{T:monotonicity} is deduced by relating $\partial_{\vec{t}}(\calN)$
to local indices, as discussed in \S\ref{sec:subharmonicity}.
This argument cannot work for $p>0$ due to certain pathologies related to $p$-adic Liouville numbers (see Remark~\ref{R:star2}). We are hopeful that one can use a perturbation argument to deduce the analogue of Theorem~\ref{T:monotonicity} for $p>0$, by reducing to the case where $\calN(x)$ has no slopes equal to 0 and applying results of \cite{kedlaya-book} (especially \cite[Theorem~11.3.4]{kedlaya-book}); however, a proof along these lines was not ready at the time of this writing.
\end{remark}

\begin{remark} \label{R:branches slopes}
By Theorem~\ref{T:continuous2}, for each $x \in U^{\an}$, there exist only finitely many branches $\vec{t}$ at $x$ along which $\calN$ has nonzero slope. If $x$ is of type 1 or 4, there are in fact no such branches. If $x$ is of type 3, then the slopes along the two branches at $x$ add up to 0.
\end{remark}

\section{Subharmonicity and index}
\label{sec:subharmonicity}

Using the piecewise affine structure of the convergence polygon, we formulate some additional properties, including local and global index formulas for de Rham cohomology.
The local index formula is due to Poineau and Pulita \cite{pulita-poineau5},
generalizing some partial results due to Robba \cite{robba-index1, robba-index2,
robba-index3, robba-indice4}
and Christol-Mebkhout \cite{cm1, cm2, cm3, cm4}.
Unfortunately, in the case $p>0$ one is forced to interact with a fundamental pathology in the theory of $p$-adic differential equations, namely the effect of \emph{$p$-adic Liouville numbers}; consequently, the global formula we derive here cannot be directly  deduced from the local formula (see Remark~\ref{R:star} and Remark~\ref{R:star2}).

\begin{hypothesis}
Throughout \S\ref{sec:subharmonicity}, continue to retain Hypothesis~\ref{H:convergence general}, but assume in addition that $K$ is algebraically closed. (Without this assumption, one can still formulate the results at the expense of having to keep track of some additional multiplicity factors.)
\end{hypothesis}

\begin{defn}
For $x \in U^{\an}$, let $(\Delta \calN)_x \in \calP[0,n]$ denote the sum of $\partial_{\vec{t}}(\calN)$ over all branches $\vec{t}$ of $X$ at $x$ (oriented away from $x$); by Remark~\ref{R:branches slopes}, this sum can only be nonzero when $x$ is of type 2. 
Define the \emph{Laplacian} of $\calN$ as the $\calP[0,n]$-valued measure $\Delta \calN$ taking a continuous function $f: U^{\an} \to \RR$
to $\sum_{x \in U^{\an}} f(x) (\Delta \calN)_x$. For $i=1,\dots,n$, we may similarly define the multiplicities $(\Delta h_i(\calN))_x \in \RR$
and the Laplacian $\Delta h_i(\calN)$; we again omit the index $i$ when it equals $n$.
\end{defn}

\begin{remark}
The definition of the Laplacian 
can also be interpreted in the context of Thuillier's potential theory
\cite{thuillier}, which applies more generally to functions which need not be piecewise affine.
\end{remark}

\begin{lemma} \label{L:laplacian irregularity}
We have
\[
\int \Delta h(\calN) = \sum_{z \in Z} \Irr_z(\nabla).
\]
\end{lemma}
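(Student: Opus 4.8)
The plan is to evaluate $\int \Delta h(\calN)$ by expanding the definition of the Laplacian and regrouping the contributions by the relevant geometric pieces of $X^{\an}$. Recall that by definition
\[
\int \Delta h(\calN) = \sum_{x \in U^{\an}} (\Delta h(\calN))_x = \sum_{x \in U^{\an}} \sum_{\vec{t}} \partial_{\vec{t}}(h(\calN)),
\]
where the inner sum runs over all branches $\vec{t}$ of $X$ at $x$. By Theorem~\ref{T:continuous2}, $\calN$ factors through the retraction onto a strict skeleton $\Gamma$ and is affine with integral derivative on each edge of $\Gamma$; in particular $\partial_{\vec{t}}(h(\calN)) = 0$ whenever $x \notin \Gamma$ or $\vec{t}$ does not point along $\Gamma$. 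So the double sum reduces to a sum over vertices $x$ of $\Gamma$ lying in $U^{\an}$ and edges of $\Gamma$ emanating from $x$.

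First I would reorganize this vertex-edge sum as a sum over edges: each edge $e$ of $\Gamma$ contributes $\partial_{\vec{t}}(h(\calN))$ at each of its two endpoints, where $\vec{t}$ is the branch pointing into $e$. Since $\calN$ is affine on $e$, the two directional derivatives of $h(\calN)$ along $e$ at its two endpoints are negatives of each other, so for an edge $e$ with \emph{both} endpoints in $U^{\an}$ the total contribution is $0$ — this is a discrete "integration by parts" / telescoping. The only surviving terms come from edges $e$ of $\Gamma$ having an endpoint at some $z \in Z$: such an edge contributes only its derivative at the $U^{\an}$-endpoint, which by the telescoping equals $-\partial_{z'}(h(\calN))$ summed over the branches $z'$ of $X$ at $z$ lying along $\Gamma$ — equivalently, $\int \Delta h(\calN) = -\sum_{z \in Z} \partial_z(h(\calN))$, where $\partial_z$ denotes the sum of directional derivatives along all branches at $z$ (there may be several if $z$ is not a bridge vertex of $\Gamma$, though as a point of $Z$ it is a vertex of type 1 with a single branch, so in fact a single term).

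Then I would invoke Theorem~\ref{T:Turrittin}: for each $z \in Z$, the polygon $-\partial_z(\calN)$ is the Turrittin-Levelt-Hukuhara polygon of $\calE_z$, whose total height is precisely the irregularity $\Irr_z(\nabla)$. Taking total heights ($h = h_n$) in the previous display gives $\int \Delta h(\calN) = \sum_{z \in Z}\bigl(- \partial_z(h(\calN))\bigr) = \sum_{z \in Z} h(-\partial_z(\calN)) = \sum_{z \in Z} \Irr_z(\nabla)$, as desired. A minor technical point to handle carefully is the case $X = \PP^1$ with $\length(Z) = 1$, which is excluded by Hypothesis~\ref{H:convergence general} ($\chi(U) \le 0$), so every $z \in Z$ genuinely lies on $\Gamma$ and the combinatorics above applies without degenerate cases; one should also note that $\Gamma$ can be taken to contain $Z^{\an}$ by definition of a strict skeleton, which is what makes the reduction to edges incident to $Z$ legitimate.

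The main obstacle is bookkeeping rather than depth: one must make sure the retraction/skeleton $\Gamma$ furnished by Theorem~\ref{T:continuous2} really does carry \emph{all} the nonzero directional derivatives of $\calN$ (so that nothing is lost in passing from $U^{\an}$ to $\Gamma$), and that the telescoping over interior edges is set up with consistent orientations so the cancellation is exact. Once the sum is correctly localized at $Z$, the identification of each boundary term with $\Irr_z(\nabla)$ is immediate from Theorem~\ref{T:Turrittin}.
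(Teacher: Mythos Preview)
Your proposal is correct and follows essentially the same approach as the paper: both arguments use the fact that on each edge of a skeleton $\Gamma$ the two endpoint slopes of $h(\calN)$ cancel, so that regrouping the zero edge-sum by vertices leaves exactly the Laplacian contributions at $U^{\an}$-vertices balanced against $-\Irr_z(\nabla)$ at each $z\in Z$ via Theorem~\ref{T:Turrittin}. Your write-up is simply a more expanded version of the paper's terse discrete Green's-theorem computation.
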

\begin{proof}
For $e$ an edge of $\Gamma$, we may compute the slopes of $\calN$ along the two branches pointing into $e$ from the endpoints of $e$; these two slopes add up to 0. If we add up these slopes over all $e$, then regroup this sum by vertices, then the sum
at each vertex $z \in Z$ equals $-\Irr_z(\nabla)$ by Theorem~\ref{T:Turrittin}, while the sum at each vertex $x \in U^{\an}$ is the multiplicity of $x$ in $\Delta \calN$. This proves the claim.
\end{proof}

\begin{defn}
For any open subset $V$ of $X^{\an}$,
consider the complex
\[
0 \to \calE \stackrel{\nabla}{\to} \calE \otimes \Omega \to 0
\]
of sheaves, keeping in mind that if $V \cap Z \neq \emptyset$, then the sections over $V$ are allowed to be meromorphic at $V \cap Z$ (but not to have essential singularities;
see Remark~\ref{R:index conditions}).
We define $\chi_{\dR}(V, \calE)$ to be the index of the hypercohomology of this complex,
i.e., the alternating sum of $K$-dimensions of the hypercohomology groups.
\end{defn}

\begin{lemma} \label{L:index formula from irregularity}
We have
\begin{equation} \label{eq:index formula from irregularity}
\chi_{\dR}(X^{\an}, \calE) 
= n \chi(U) - \sum_{z \in Z} \Irr_z(\nabla) = n (2 - 2g(X) - \length(Z)) - \sum_{z \in Z} \Irr_z(\nabla).
\end{equation}
\end{lemma}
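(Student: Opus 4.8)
The plan is to deduce the global index formula \eqref{eq:index formula from irregularity} from the Turrittin-type local information encoded in $\Delta\calN$ together with the classical Euler characteristic of a rank-$n$ bundle with connection. The key observation is that $\chi_{\dR}(X^{\an},\calE)$ should be computed as though we were working with an algebraic connection on $X$ with meromorphic singularities along $Z$, so the starting point is the well-known algebraic index formula: for a connection $(\calE,\nabla)$ on $U = X - Z$ with $X$ proper of genus $g$, one has
\[
\chi_{\dR}(X,\calE) = n\,\chi(U) - \sum_{z \in Z} \Irr_z(\nabla),
\]
which is the Deligne--Malgrange irregularity formula (the $p=0$, or more precisely the purely formal, analogue of the Euler--Poincar\'e formula). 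So the real content is the comparison $\chi_{\dR}(X^{\an},\calE) = \chi_{\dR}(X,\calE)$, i.e., that the Berkovich-analytic de Rham cohomology of the complex $0 \to \calE \xrightarrow{\nabla} \calE\otimes\Omega \to 0$ (with the stipulated meromorphy-but-no-essential-singularities condition along $Z$) has the same index as its algebraic counterpart.

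First I would set up the comparison by the usual d\'evissage: cover $X^{\an}$ by the strict skeleton $\Gamma$ through which $\calN$ factors (Theorem~\ref{T:continuous2}), together with the open discs and annuli in the complement; more concretely, excise small open discs $D_z$ around each $z \in Z$ and small open discs around each type-1 point of $U$ where $\calN$ is nonzero, and handle the three pieces (the ``core'' retracting onto a finite graph, the punctured discs at $Z$, and the residual discs in $U$) via a Mayer--Vietoris / excision spectral sequence. On each residual open disc $D \subseteq U$, the key input is that $\chi_{\dR}(D,\calE|_D)$ is computed by the behavior of $\calN$ at the generic point: when the Robba condition holds the restriction is trivial by Theorem~\ref{T:transfer}(a) and contributes nothing new, and in general the local index at a type-1 or boundary point is governed by $\partial_{\vec t}(\calN)$ exactly as in the Poineau--Pulita local index theory cited in \S\ref{sec:subharmonicity}. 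On the punctured disc at $z \in Z$, the local index is $-\Irr_z(\nabla)$ up to the rank-$n$ contribution, which is precisely Theorem~\ref{T:Turrittin} repackaged; Lemma~\ref{L:laplacian irregularity} is the bookkeeping identity that lets these local contributions be summed consistently, since the interior type-2 multiplicities of $\Delta\calN$ cancel when integrated and only the boundary terms $-\Irr_z(\nabla)$ survive.

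Concretely the steps are: (1) reduce, by the Mayer--Vietoris sequence for the analytic complex, to computing indices on a core neighborhood of $\Gamma$ and on the finitely many discs where $\calN$ is nonconstant or $Z$ intrudes; (2) on the core, use that $\calE$ restricted to a neighborhood of the skeleton is (after the monodromy/decomposition reductions) controlled by a connection whose index matches $n\chi$ of the corresponding open surface, invoking the piecewise-affine structure of $\calN$ and the additivity of indices over the graph; (3) on each disc meeting $Z$ in a point $z$, apply Theorem~\ref{T:Turrittin} to identify the defect from triviality as $\Irr_z(\nabla)$; (4) on each disc in $U$, use Theorem~\ref{T:transfer}(a) and the local index formula to see the net contribution is accounted for by $\int \Delta h(\calN)$, then invoke Lemma~\ref{L:laplacian irregularity} to rewrite $\int \Delta h(\calN) = \sum_{z\in Z}\Irr_z(\nabla)$; (5) assemble, obtaining $\chi_{\dR}(X^{\an},\calE) = n\chi(U) - \sum_z \Irr_z(\nabla)$ and expand $\chi(U) = 2 - 2g(X) - \length(Z)$ by Definition~\ref{D:minimal strict skeleton}.

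The main obstacle is step (2)--(4) in positive residue characteristic: as the paper repeatedly warns (Remark~\ref{R:p-adic Liouville numbers}, and the flagged Remark~\ref{R:star}/Remark~\ref{R:star2}), the naive local-to-global passage fails because of $p$-adic Liouville number phenomena, so one cannot simply sum the Poineau--Pulita local indices. The honest route is therefore to prove the \emph{global} identity directly rather than by summing local ones --- essentially by reducing to the algebraic de Rham index via a comparison theorem (Kiehl/Baldassarri-type finiteness for the analytic complex, GAGA for the proper curve) so that the Liouville pathologies, which are local obstructions to the local index formula, never enter the global count. I expect the bulk of the work to be justifying this analytic-to-algebraic comparison of indices under only the ``no essential singularities'' hypothesis (cf. Remark~\ref{R:index conditions}), and checking that the meromorphy condition along $Z$ is exactly what makes the analytic hypercohomology finite-dimensional and equal to the algebraic one; once that comparison is in hand, the displayed formula is the classical irregularity formula for $\chi_{\dR}(X,\calE)$ together with the elementary expansion of $\chi(U)$.
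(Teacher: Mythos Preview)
Your proposal contains two separate strategies, and neither is carried through. The first (steps (1)--(4): Mayer--Vietoris on the nonarchimedean side, local indices on discs and annuli, summing via Lemma~\ref{L:laplacian irregularity}) is, as you yourself concede, blocked by the Liouville pathology when $p>0$; it is also logically awkward, since it would make this lemma depend on the Poineau--Pulita local index machinery that the paper develops \emph{after} this point. Your fallback (``the honest route'': GAGA plus the classical irregularity formula for algebraic $\chi_{\dR}(X,\calE)$) is the right direction but is left as a gesture. In particular, you invoke ``the classical irregularity formula'' over $K$ without saying how to prove it over a nonarchimedean field of characteristic $0$.

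The paper's proof fills exactly this gap, via a Lefschetz-principle move you do not mention. The index $\chi_{\dR}(X^{\an},\calE)$ is computed by the hypercohomology spectral sequence whose $E_1$-page consists of coherent cohomology groups of $\calE$ and $\calE\otimes\Omega$; these coherent dimensions are unchanged by GAGA over $K$, by descent to a subfield $K_0\subset K$ finitely generated over $\QQ$, by base change along an embedding $K_0\hookrightarrow\CC$, and by GAGA over $\CC$. Hence $\chi_{\dR}(X^{\an},\calE)=\chi_{\dR}(X^{\an}_\CC,\calE)$, despite the fact that $\nabla$ is only $K$-linear. On the complex side one then covers $X^{\an}_\CC$ by simply connected opens each meeting $Z$ in at most one point and applies the Deligne--Malgrange local index formula \eqref{eq:deligne-malgrange}.

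The upshot: the lemma is proved entirely by transport to $\CC$ and classical complex analysis; no skeleta, no convergence polygons, no Theorem~\ref{T:transfer}, no nonarchimedean local index theory enter. Your instinct that GAGA is the key was correct, but the missing ingredient is the passage to $\CC$ via a finitely generated subfield, which is what lets you invoke Deligne--Malgrange at all.
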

\begin{proof}
Let $K_0$ be a subfield of $K$ which is finitely generated over $\QQ$
to which $X, Z,\calE, \nabla$ can be descended.
Then choose an embedding $K_0 \subset \CC$ and let $X_\CC$ be the base extension of the descent of $X$, again equipped with a meromorphic vector bundle $\calE$ and connection $\nabla$.
Note that $\chi_{\dR}(X^{\an}, \calE)$ is computed by a spectral sequence in which one first computes the coherent cohomology of $\calE$ and $\calE \otimes \Omega$ separately.
By the GAGA principle both over $\CC$ 
\cite[Expos\'e XII]{sga1} and $K$ \cite[Example~3.2.6]{conrad},
these coherent cohomology groups can be computed equally well over any of $X^{\an}$, $X$
(or its descent to $K_0$), $X_{\CC}$, or $X_{\CC}^{\an}$. Consequently,
despite the fact that the connection is only $K$-linear rather than $\calO$-linear, we may nonetheless conclude that
$\chi_{\dR}(X^{\an}, \calE) = \chi_{\dR}(X_{\CC}^{\an}, \calE)$.
(As an aside, this argument recovers a comparison theorem of Baldassarri \cite{baldassarri-comparison}.)

To compute $\chi_{\dR}(X_{\CC}^{\an},\calE)$, we may either appeal directly to
\cite[(6.21.1)]{deligne-eq} or argue directly as follows. Form a finite open covering $\{V_i\}_{i \in I}$ of $X_{\CC}$ such that for each nonempty subset $S$ of $I$, the set $V_S = \bigcap_{i \in S} V_i$ satisfies the following conditions.
\begin{itemize}
\item
If nonempty, $V_S$ is isomorphic to a simply connected domain in $\CC$.
\item
The set $V_S \cap Z$ contains at most one element.
\end{itemize}
We then have
\[
\chi_{\dR}(X^{\an}_{\CC}, \calE) = 
\sum_{S \subseteq I, S \neq \emptyset}
(-1)^{\#S-1}
\chi_{\dR}\left(V_S, \calE \right).
\]
It then suffices to check that for each nonempty subset $S$ of $I$,
\begin{equation} \label{eq:deligne-malgrange}
\chi_{\dR}(V_S, \calE) = \begin{cases} -\Irr_z(\nabla) & (V_S \cap Z = \{z\}) \\
n & (V_S \cap Z = \emptyset, V_S \neq \emptyset) \\
0 & (V_S= \emptyset).
\end{cases}
\end{equation}
In case $V_S = \emptyset$, there is nothing to check.
In case $V_S \neq \emptyset$ but $V_S \cap Z = \emptyset$, this is immediate because the restriction of $\calE$ to $V_S$ is trivial. In case $V_S \cap Z = \{z\}$,
we may similarly replace $V_S$ with a small open disc around $z$,
and then invoke the Deligne-Malgrange interpretation of irregularity as the local index of meromorphic de Rham cohomology on a punctured disc \cite[Th\'eor\`eme~3.3(d)]{malgrange}.
\end{proof}

\begin{theorem}[Global index formula] \label{T:index formula}
We have
\begin{equation} \label{eq:index formula}
\chi_{\dR}(X^{\an}, \calE) 
= n \chi(U) - \int \Delta h(\calN) = n (2 - 2g(X) - \length(Z)) - \int \Delta h(\calN).
\end{equation}
\end{theorem}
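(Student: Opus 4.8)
The plan is to combine the two lemmas immediately preceding the statement. By Lemma~\ref{L:index formula from irregularity}, we have
\[
\chi_{\dR}(X^{\an}, \calE) = n\chi(U) - \sum_{z \in Z} \Irr_z(\nabla),
\]
and by Lemma~\ref{L:laplacian irregularity}, we have $\int \Delta h(\calN) = \sum_{z \in Z} \Irr_z(\nabla)$. Substituting the second identity into the first, and then expanding $\chi(U) = 2 - 2g(X) - \length(Z)$ as in Definition~\ref{D:minimal strict skeleton}, yields exactly \eqref{eq:index formula}. So at this level the proof is a one-line deduction.

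The substance, then, is really already carried in Lemmas~\ref{L:index formula from irregularity} and~\ref{L:laplacian irregularity}, both of which are available to us; I would simply note the deduction and perhaps remark on what it says. It is worth underscoring the conceptual point: the global de Rham index, which Lemma~\ref{L:index formula from irregularity} computes via a GAGA reduction to the complex-analytic case and the Deligne--Malgrange local index formula, gets re-expressed entirely in terms of the convergence polygon $\calN$ — the sum of local irregularities is recovered by integrating the Laplacian of the total height $h(\calN)$ over $U^{\an}$. In effect, Theorem~\ref{T:index formula} says the Euler characteristic is $n\chi(U)$ corrected by a term that is intrinsic to the nonarchimedean-analytic datum $(\calE, \nabla)$ and computable from the piecewise-affine function $h(\calN)$ on the skeleton.

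Since there is no genuine obstacle beyond assembling the two lemmas, the only thing to be careful about is that the hypotheses match: Lemma~\ref{L:laplacian irregularity} is stated under Hypothesis~\ref{H:convergence general} and uses Theorem~\ref{T:Turrittin} (valid at each $z \in Z$) and Theorem~\ref{T:continuous2} (to make sense of the slopes along branches), while Lemma~\ref{L:index formula from irregularity} is an algebraic/complex-analytic statement; in the ambient hypothesis for \S\ref{sec:subharmonicity} we have additionally assumed $K$ algebraically closed, which is consistent with the statement of the theorem. If one wished to avoid invoking Lemma~\ref{L:laplacian irregularity} as a black box, one could instead argue directly by breaking $\int \Delta h(\calN)$ into the contributions of the vertices of $\Gamma$ in $U^{\an}$ (which cancel when one sums slopes along the two branches into each interior edge) and the contributions at the points of $Z$ (which equal $-\Irr_z(\nabla)$ by Theorem~\ref{T:Turrittin}), recovering the same identity — but this is precisely the content of Lemma~\ref{L:laplacian irregularity}, so there is nothing new to do.

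\begin{proof}
Combine Lemma~\ref{L:index formula from irregularity} with Lemma~\ref{L:laplacian irregularity}: the former gives
\[
\chi_{\dR}(X^{\an}, \calE) = n\chi(U) - \sum_{z \in Z} \Irr_z(\nabla),
\]
while the latter gives $\int \Delta h(\calN) = \sum_{z \in Z} \Irr_z(\nabla)$. Substituting and then using $\chi(U) = 2 - 2g(X) - \length(Z)$ yields \eqref{eq:index formula}.
\end{proof}
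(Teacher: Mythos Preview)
Your proof is correct and matches the paper's own argument exactly: the paper simply says the theorem follows by comparing Lemma~\ref{L:laplacian irregularity} with Lemma~\ref{L:index formula from irregularity}.
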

\begin{proof}
This follows by comparing Lemma~\ref{L:laplacian irregularity} with
Lemma~\ref{L:index formula from irregularity}.
\end{proof}

\begin{remark}  \label{R:index conditions}
It may not be immediately obvious why Theorem~\ref{T:index formula} is of value, i.e.,
why it is useful to express the index of de Rham cohomology in terms of convergence polygons instead of irregularity. As observed by Baldassarri \cite[(0.13)]{baldassarri-comparison}, there is a profound difference between the behavior of the index in the complex-analytic and nonarchimedean settings. In the complex case, for any open analytic subspace $V$ of $X^{\an}_{\CC}$, we have
\begin{equation} \label{eq:local index formula complex}
\chi_{\dR}(V, \calE) = n \chi(V \cap U^{\an}) - \sum_{z \in V \cap Z} \Irr_z(\nabla)
\end{equation}
by the same argument as in the proof of \eqref{eq:deligne-malgrange}.
In particular, $\chi_{\dR}(V, \calE) \neq \chi_{\dR}(V - Z, \calE)$; that is, the index of de Rham hypercohomology depends on whether we allow poles or essential singularities at the points of $Z$. By contrast, in the nonarchimedean case, these two indices coincide
under a suitable technical hypothesis to ensure that they are both defined; 
see Example~\ref{exa:nonarchimedean local index} for a simple example
and Corollary~\ref{C:punctures} for the general case
(and Example~\ref{exa:Liouville} and Remark~\ref{R:Liouville}
for a counterexample failing the technical hypothesis).
This means that in the nonarchimedean case, the ``source'' of the index of de Rham cohomology is not irregularity, but rather the Laplacian of the convergence polygon (see Theorem~\ref{T:local index formula}).
\end{remark}

\begin{example} \label{exa:nonarchimedean local index}
Consider the connection associated to Example~\ref{exa:add pole}
as per Remark~\ref{R:p1 case}; 
note that $\Irr_0(\nabla) = 0$, $\Irr_{\infty}(\nabla) = 1$,
so $\chi_{\dR}(X^{\an}, \calE) = -1$.
For $0 < \alpha < \beta$, let $V_\beta, W_\alpha$ be the subspace $|z| < \beta$,
$|z| > \alpha$ of $X^{\an}$. By Mayer-Vietoris,
\begin{equation} \label{eq:mayer-vietoris}
\chi_{\dR}(V_\beta, \calE) +
\chi_{\dR}(W_\alpha, \calE) -
\chi_{\dR}(V_\beta \cap W_\alpha, \calE) =
\chi_{\dR}(X^{\an}, \calE) = - 1.
\end{equation}
On the other hand, $\chi_{\dR}(V_\beta, \calE)$ (resp.\
$\chi_{\dR}(W_\alpha, \calE)$)
equals the index of the operator $y \mapsto y' - y$ on 
Laurent series in $z$ convergent for $|z| < \beta$
(resp.\ in $z^{-1}$ convergent for $|z^{-1}| < \alpha^{-1}$). 
If $f = \sum_n f_n z^{-n}$, $g = \sum_n g_n z^{-n}$ are two such series, then the equation
$g = f' - f$ is equivalent to
\begin{equation} \label{eq:nonarchimedean local index}
f_n = -g_n - (n-1) f_{n-1} \qquad (n \in \ZZ).
\end{equation}
Suppose first that $p^{-1/(p-1)} < \alpha < \beta$.
Then given $g \in K((z^{-1}))$, we may solve uniquely for $f \in K((z^{-1}))$,
and if $g$ converges on $W_\alpha - \{\infty\}$, then so does $f$. We thus compute that
\[
\chi_{\dR}(V_\beta, \calE) = -1, \quad
\chi_{\dR}(W_\alpha, \calE) = 0, \quad
\chi_{\dR}(V_\beta \cap W_\alpha, \calE) = 0;
\]
namely, the second equality is what we just computed, the third
follows from Robba's index formula \cite{robba-indice4} 
(see also \cite[Lemma~3.7.5]{kedlaya-radii}), 
and the first follows from the other two plus \eqref{eq:mayer-vietoris}.
In particular, the local index at $\infty$ equals 0, whereas in the complex-analytic setting it equals $-1$ by the Deligne-Malgrange formula
(see the proof of Lemma~\ref{L:index formula from irregularity}).

Suppose next that $\alpha < \beta < p^{-1/(p-1)}$. Then by contrast, we have
\[
\chi_{\dR}(V_\beta, \calE) = 0, \quad
\chi_{\dR}(W_\alpha, \calE) = -1, \quad
\chi_{\dR}(V_\beta \cap W_\alpha, \calE) = 0;
\]
namely, the first and third equalities follow from the triviality of $\nabla$ 
on $V_\beta$, and the second follows from the other two plus \eqref{eq:mayer-vietoris}.
\end{example}

With Example~\ref{exa:nonarchimedean local index} in mind, we now describe a local refinement of Theorem~\ref{T:index formula}, in which we dissect the combinatorial formula for the index into local contributions.

\begin{defn}
For $x \in U^{\an} \cap \Gamma_{X, Z}$, let $\val_\Gamma(x)$ be the valence of $x$ as a vertex of $\Gamma_{X,Z}$,
taking $\val_{\Gamma}(x) = 2$ when $x$ lies on the interior of an edge.
(We refer to \emph{valence} instead of \emph{degree} to avoid confusion with degrees of morphisms.)
For $x \in U^{\an}$,
define 
\begin{equation} \label{eq:virtual local index}
\chi_{x}(\calE) = \begin{cases} n(2 - 2g(C_x) - \val_{\Gamma}(x)) - (\Delta h(\calN))_x &\mbox{if $x \in \Gamma_{X,Z}$} \\
-\Delta h(\calN)_x & \mbox{otherwise.}
\end{cases}
\end{equation}
Let $\chi(\calE)$ be the $\RR$-valued measure whose value on a continuous function $f: U^{\an} \to \RR$ is $\sum_{x \in U^{\an}} f(x) \chi_{x}(\calE)$.
\end{defn}

\begin{lemma} \label{L:sum of virtual local indices}
We have
\[
\chi_{\dR}(X^{\an}, \calE) = n \chi(U) - \int \Delta h(\calN) = \int \chi(\calE).
\]
\end{lemma}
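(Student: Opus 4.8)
The plan is to reduce the statement to the combination of the global index formula (Theorem~\ref{T:index formula}) and a purely combinatorial rearrangement of the Laplacian $\Delta h(\calN)$ across the skeleton $\Gamma_{X,Z}$. The first equality, $\chi_{\dR}(X^{\an},\calE) = n\chi(U) - \int \Delta h(\calN)$, is precisely Theorem~\ref{T:index formula}, so there is nothing to prove there. The real content is the second equality, $n\chi(U) - \int \Delta h(\calN) = \int \chi(\calE)$, which after unwinding the definition of $\chi(\calE)$ amounts to showing
\[
n \chi(U) = \sum_{x \in U^{\an} \cap \Gamma_{X,Z}} n\bigl(2 - 2g(C_x) - \val_\Gamma(x)\bigr) + \sum_{z \in Z} \bigl(\text{contribution of } z\bigr),
\]
together with checking that the $(\Delta h(\calN))_x$ terms appearing in \eqref{eq:virtual local index} reassemble exactly into $-\int \Delta h(\calN)$. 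The latter is immediate from the definition of the Laplacian as a measure: summing $-(\Delta h(\calN))_x$ over all $x \in U^{\an}$ (both those on $\Gamma_{X,Z}$ and those off it) gives $-\int \Delta h(\calN)$, and by Remark~\ref{R:branches slopes} only type-$2$ points contribute, all of which lie on $\Gamma_{X,Z}$; so in fact the ``otherwise'' case in \eqref{eq:virtual local index} contributes nothing.

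So the crux is the topological identity $n\chi(U) = \sum_{x \in U^{\an} \cap \Gamma_{X,Z}} n(2 - 2g(C_x) - \val_\Gamma(x))$. First I would recall that $\Gamma_{X,Z}$ is a finite graph onto which $X^{\an}$ retracts, and that there is a standard decomposition of the topological Euler characteristic of $X^{\an}$ (equivalently of $X$ as a compact Riemann surface after descent, via GAGA as in the proof of Lemma~\ref{L:index formula from irregularity}) in terms of the skeleton: namely $2 - 2g(X) = \chi(\Gamma_{X,Z}) + \sum_{x} 2g(C_x)$, where the sum runs over type-$2$ vertices and $C_x$ is the residue curve attached to $x$. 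This is the semistable-reduction genus formula: the genus of $X$ splits as the first Betti number of the dual graph plus the sum of the genera of the components of the special fiber. Then I would write $\chi(\Gamma_{X,Z}) = \#\{\text{vertices}\} - \#\{\text{edges}\}$ and convert the edge count into a vertex sum via $2\#\{\text{edges}\} = \sum_x \val_\Gamma(x)$ (valence summed over all vertices, including the points of $Z$). Subtracting off the $\length(Z)$ points of $Z$, each of which is a leaf or higher-valence vertex contributing $\val_\Gamma(z)$ but no ``$2 - 2g$'' term, and bookkeeping $\chi(U) = \chi(X) - \length(Z) = 2 - 2g(X) - \length(Z)$, one matches both sides. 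The one subtlety is that vertices on the interior of an edge are assigned $\val_\Gamma(x) = 2$ and $g(C_x) = 0$, so they contribute $n(2 - 0 - 2) = 0$ and may be freely inserted or deleted without changing either side — this is what makes the formula independent of how finely one subdivides $\Gamma_{X,Z}$, and it should be remarked on explicitly.

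The main obstacle, such as it is, is not any single hard estimate but rather assembling the bookkeeping cleanly: one must be careful that the sum in $\int \chi(\calE)$ ranges over \emph{all} of $U^{\an}$ while only finitely many terms are nonzero, that the points of $Z$ are excluded from the ``$2 - 2g(C_x) - \val_\Gamma(x)$'' tally but their adjacent edge-slopes have already been absorbed into $\int \Delta h(\calN)$ via Theorem~\ref{T:Turrittin} and Lemma~\ref{L:laplacian irregularity} (this is exactly the mechanism of the proof of Lemma~\ref{L:laplacian irregularity}, which I would cite rather than repeat), and that the genus decomposition of $X^{\an}$ relative to $\Gamma_{X,Z}$ is invoked in the correct normalization. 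Concretely I would: (i) cite Theorem~\ref{T:index formula} for the first equality; (ii) expand $\int \chi(\calE)$ using the definition and Remark~\ref{R:branches slopes} to discard off-skeleton contributions; (iii) separate the $n(2 - 2g(C_x) - \val_\Gamma(x))$ part from the $-(\Delta h(\calN))_x$ part; (iv) recognize the second part as $-\int \Delta h(\calN)$; (v) prove $\sum_{x \in U^{\an} \cap \Gamma_{X,Z}} n(2 - 2g(C_x) - \val_\Gamma(x)) = n\chi(U)$ by the skeleton Euler-characteristic computation above; (vi) conclude. The only genuinely external input is the genus-of-$X$ versus genus-of-skeleton-plus-components identity, which is standard in the theory of Berkovich curves (e.g.\ \cite{bpr}) and which the paper is already implicitly relying on elsewhere.
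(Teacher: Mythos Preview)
Your proposal is correct and follows essentially the same route as the paper: invoke Theorem~\ref{T:index formula} for the first equality, then establish the second by combining the definition of $\chi(\calE)$ with the combinatorial genus formula $\sum_{x \in U^{\an} \cap \Gamma_{X,Z}} (2 - 2g(C_x) - \val_{\Gamma}(x)) = 2 - 2g(X) - \length(Z)$, which the paper simply cites from \cite[\S 4.16]{bpr}.

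One factual slip, though it is not load-bearing: your claim that ``only type-$2$ points contribute, all of which lie on $\Gamma_{X,Z}$'' is false in general. The convergence polygon factors through some strict skeleton $\Gamma$ as in Theorem~\ref{T:continuous2}, which may strictly contain $\Gamma_{X,Z}$, and the Laplacian can be supported at type-$2$ points of $\Gamma \setminus \Gamma_{X,Z}$ (this is precisely the phenomenon behind Theorem~\ref{T:virtual local index} and Example~\ref{exa:subharmonicity}). Fortunately your argument does not actually need this claim: as you yourself note just before, summing $-(\Delta h(\calN))_x$ over \emph{all} $x \in U^{\an}$ already gives $-\int \Delta h(\calN)$, and the ``otherwise'' case of \eqref{eq:virtual local index} is correctly absorbed into that sum whether or not it vanishes. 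So simply delete the sentence ``so in fact the `otherwise' case \dots\ contributes nothing'' and the proof stands.
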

\begin{proof}
This follows from Theorem~\ref{T:index formula} plus the identity
\[
\sum_{x \in U^{\an} \cap \Gamma_{X,Z}} (2 - 2g(C_x) - \val_{\Gamma}(x))
= 2 - 2g(X) - \length(Z),
\]
which amounts to the combinatorial formula for the genus of an analytic curve
\cite[\S 4.16]{bpr}.
\end{proof}

\begin{theorem}[Local index formula] \label{T:local index formula}
Let $V$ be an open subspace of $X^{\an}$
which is the retraction of an open subspace of $\Gamma_{X,Z}$.
If $p>0$, assume some additional technical hypotheses (see Remark~\ref{R:star}). Then
\[
\chi_{\dR}(V, \calE) = \int_{V \cap U^{\an}} \chi(\calE).
\]
\end{theorem}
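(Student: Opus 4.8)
The plan is to reduce the local statement to the global formula of Theorem~\ref{T:index formula} by a Mayer--Vietoris/excision argument, exactly mimicking the structure already visible in Example~\ref{exa:nonarchimedean local index}. First I would set up the combinatorics: write $V$ as the retraction of an open subset $W$ of $\Gamma_{X,Z}$, and decompose $W$ into a union of open stars of vertices and open sub-edges of $\Gamma_{X,Z}$. Correspondingly $V$ is covered by elementary pieces of two kinds: (i) the retraction of an open annulus onto a sub-edge of $\Gamma$ (equivalently, $V_e$ for an edge $e$), and (ii) the retraction of a ``star'' neighborhood of a type-2 point $x$, which looks like a punctured affinoid neighborhood over the residue curve $C_x$ with finitely many discs and/or points of $Z$ removed. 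Since $\chi_{\dR}(-,\calE)$ and the measure $\int_{(-)}\chi(\calE)$ are both finitely additive for admissible open covers with the intersection pattern controlled above (each pairwise intersection being the retraction of an open annulus, each triple intersection empty after refining), it suffices to verify the formula on each elementary piece.

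The key steps, in order, would be: (1) Establish additivity of both sides under Mayer--Vietoris for the covering described, checking that intersections are retractions of annuli (type-3 or ``edge'' pieces) and that $\chi_{\dR}$ of an annulus-retraction matches $\int\chi(\calE)$ over it --- this is the analogue of the equality $\chi_{\dR}(V_\beta\cap W_\alpha,\calE)=0$ in Example~\ref{exa:nonarchimedean local index}, and should follow from Robba's index formula together with Theorem~\ref{T:continuous2} (affinity of $\calN$ along edges makes $(\Delta h(\calN))_x = 0$ at interior edge points, so $\chi_x(\calE) = 0$ there except at the two endpoints, where the $2-2g(C_x)-\val_\Gamma$ term and the Laplacian term conspire correctly). (2) Verify the formula on a star neighborhood $V_x$ of a single type-2 vertex $x$: here one wants $\chi_{\dR}(V_x,\calE) = n(2-2g(C_x)-\val_\Gamma(x)) - (\Delta h(\calN))_x$, which is the genuinely new local input and is where the hypotheses on $p$-adic exponents (the ``additional technical hypotheses'' of Remark~\ref{R:star}) must be used to control the de Rham index of $\calE$ near the finitely many punctures and boundary annuli of $V_x$. (3) Verify the formula on a neighborhood of a point $z \in Z$ lying on $\Gamma$, where Theorem~\ref{T:Turrittin} identifies $-\partial_z(\calN)$ with the Turrittin--Levelt--Hukuhara polygon and hence ties $(\Delta h(\calN))_z$ to $\Irr_z(\nabla)$, recovering the Deligne--Malgrange local index $-\Irr_z(\nabla)$ as in Lemma~\ref{L:index formula from irregularity}. (4) Assemble: sum the elementary contributions and invoke the combinatorial genus identity $\sum_x (2-2g(C_x)-\val_\Gamma(x)) = 2-2g(X)-\length(Z)$ from Lemma~\ref{L:sum of virtual local indices} to see the pieces glue to $\int_{V\cap U^{\an}}\chi(\calE)$, with the global case $V = X^{\an}$ serving as a consistency check against Theorem~\ref{T:index formula}.

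I expect step (2) to be the main obstacle, and in fact this is precisely where the theory is known to be delicate in characteristic $p$. The point is that the de Rham index over a star neighborhood is computed as an index of the operator $\nabla$ acting on sections meromorphic along the punctures and bounded/overconvergent along the boundary annuli; controlling the kernel and cokernel dimensions requires knowing that local solutions near each puncture behave well, and this fails in the presence of $p$-adic Liouville exponents --- compare Remark~\ref{R:p-adic Liouville numbers}, Remark~\ref{R:star}, Remark~\ref{R:star2}, and the counterexample promised in Example~\ref{exa:Liouville}. So the honest proof must either (a) quote the Poineau--Pulita local index formula \cite{pulita-poineau5} on each star, after checking their hypotheses are implied by the technical assumptions we impose, and then patch; or (b) reduce, via Theorem~\ref{T:transfer} and a deformation argument, to the Robba condition case where the index over an annulus or punctured disc is computed by the slopes of $\calN$ directly. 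Approach (a) is cleaner and is presumably what the paper does: the role of Theorem~\ref{T:index formula} in the write-up is exactly to provide the global normalization that the purely local formula of \cite{pulita-poineau5}, summed over $\Gamma_{X,Z}$, must reproduce --- so the remaining work is checking compatibility of conventions and multiplicities (the assumption that $K$ is algebraically closed removes the residue-degree factors) and confirming that the intersection (annulus) terms cancel in pairs, leaving only the vertex and $Z$-point contributions.
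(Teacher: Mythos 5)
The paper's proof of this theorem is a single citation: ``See \cite[Theorem~3.5.2]{pulita-poineau5}.'' Your option~(a) correctly guesses this, and your reconstruction of what sits behind that citation --- decomposition of $V$ along the stars and sub-edges of $\Gamma_{X,Z}$, verification on elementary pieces via Robba's annulus index formula and the Turrittin computation at points of $Z$, with the $p$-adic-exponent hypotheses doing the real work at the boundaries in step~(2) --- is an accurate sketch of the Poineau--Pulita argument, and it is reassuring that you pinpoint exactly the same obstruction the paper flags in Remark~\ref{R:star2}.

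One caution about your opening framing, though: you say the plan is to ``reduce the local statement to the global formula of Theorem~\ref{T:index formula} by a Mayer--Vietoris/excision argument.'' That direction does not quite work. The global formula is a single equation and cannot by itself pin down the many local contributions $\chi_x(\calE)$; it only constrains their sum. Indeed the paper proves the global formula (Lemmas~\ref{L:laplacian irregularity} and~\ref{L:index formula from irregularity}) independently of the local one, via irregularity and a reduction to $\CC$, and then uses Mayer--Vietoris (Remark~\ref{R:star}) to go in the \emph{opposite} direction --- deducing the global formula from the local one as a consistency check. Your own steps (1)--(4) are actually a bottom-up proof that verifies the local formula piece by piece and then assembles, so the logical content of your proposal is sound; it is just the opening sentence that inverts the arrow. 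If you want to present this as a self-contained argument rather than a citation, the statement that needs independent proof is precisely the star-neighborhood identity $\chi_{\dR}(\pi_\Gamma^{-1}(\star_v),\calE)=\chi_v(\calE)$ of \eqref{eq:local index contributions}, and that cannot be extracted from Theorem~\ref{T:index formula} alone.
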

\begin{proof}
See \cite[Theorem~3.5.2]{pulita-poineau5}.
\end{proof}

\begin{cor} \label{C:punctures}
With hypotheses as in Theorem~\ref{T:local index formula},
$\chi_{\dR}(V, \calE) = \chi_{\dR}(V \cap U^{\an}, \calE)$; that is, the index of de Rham hypercohomology is the same whether we allow poles or essential singularities at $Z$.
\end{cor}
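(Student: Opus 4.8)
The plan is to deduce the statement by comparing two instances of Theorem~\ref{T:local index formula}, applied to the same open set $V$ but with two different choices of singular locus. First I would observe that $V \cap U^{\an}$ is the complement in $V$ of the finite set $V \cap Z$, and that the connection $\calE$ restricted to $V \cap U^{\an}$ is a connection on a curve whose natural singular set is $Z \setminus V$. Concretely, write $Z' = Z \cap V$ and $U' = X \setminus (Z \setminus V)$, so that $V \cap U^{\an}$ is an open subspace of $U'^{\an}$ which retracts onto an open subspace of the skeleton $\Gamma_{X, Z \setminus V}$ (after checking this retraction claim, which is where one uses that $V$ is a retraction of an open subspace of $\Gamma_{X,Z}$). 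The de Rham complex whose index computes $\chi_{\dR}(V \cap U^{\an}, \calE)$ is, by definition, the one allowing meromorphic (not essential) singularities at $Z'$ but nothing at all at $Z \setminus V$ — which is exactly the complex computing $\chi_{\dR}(V, \calE)$ when $V$ is viewed inside the geometry $(X, Z \setminus V)$. So both sides of the desired equality are themselves instances of the left-hand side of Theorem~\ref{T:local index formula}, for the two pairs $(X,Z)$ and $(X, Z\setminus V)$.

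Next I would apply Theorem~\ref{T:local index formula} to each side. For the pair $(X,Z)$ this gives $\chi_{\dR}(V, \calE) = \int_{V \cap U^{\an}} \chi(\calE)$ with the local-index density $\chi_x(\calE)$ built from $\Gamma_{X,Z}$ and from the convergence polygon $\calN$ of $\calE$ relative to $Z$. For the pair $(X, Z \setminus V)$ it gives $\chi_{\dR}(V \cap U^{\an}, \calE) = \int_{V \cap U^{\an}} \chi'(\calE)$, where now the density $\chi'_x(\calE)$ is built from $\Gamma_{X, Z\setminus V}$ and from the convergence polygon $\calN'$ of $\calE$ relative to $Z \setminus V$. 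The key point is that on the locus over which we are integrating, namely $V \cap U^{\an}$, these two densities agree: a point $x$ lies in $V\cap U^{\an} \cap \Gamma_{X,Z}$ iff it lies in $V \cap U^{\an} \cap \Gamma_{X, Z\setminus V}$ (since inside $V$ the two skeleta coincide, the points of $Z' = Z\cap V$ having been absorbed as genuine type-1 vertices in neither case contributing to the interior of $V$), the valences $\val_\Gamma(x)$ and the curves $C_x$ are unchanged, and — most importantly — the convergence polygons $\calN$ and $\calN'$ restrict to the same function on $V \cap U^{\an}$. This last fact holds because the convergence polygon at a point $x$ depends only on $\calE$ in a neighborhood of $x$: enlarging $Z$ outside of $V$ does not change $U_x$ for $x \in V$, nor the radii of convergence $s_i(x)$, hence not $\Delta h(\calN)_x$ for $x$ in the interior of $V$.

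The main obstacle I expect is the bookkeeping at the "boundary" of $V$ — that is, making precise the claim that the retraction and skeleton hypotheses transfer correctly from $(X,Z)$ to $(X, Z\setminus V)$, and that no spurious contributions to $\int_{V\cap U^{\an}}$ appear from points where the two geometries differ. One must check that $V$, being the retraction of an open subset of $\Gamma_{X,Z}$, is also the retraction of an open subset of $\Gamma_{X, Z\setminus V}$; this is geometrically plausible but needs the structure theory of skeleta from \cite{bpr}, together with the compatibility of minimal skeleta under adding points of $Z$ (Definition~\ref{D:minimal strict skeleton}). One also has to confirm that the technical hypotheses invoked for $p>0$ in Theorem~\ref{T:local index formula} are satisfied simultaneously for both pairs; since those hypotheses (see Remark~\ref{R:star}) are conditions on the $p$-adic exponents of $\nabla$ at the points of $Z$, and removing points from $Z$ only removes constraints, the hypothesis for $(X,Z)$ implies the one for $(X, Z\setminus V)$, so this causes no trouble. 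Granting these points, subtracting the two index formulas and using $\chi_x(\calE) = \chi'_x(\calE)$ on $V \cap U^{\an}$ yields $\chi_{\dR}(V,\calE) = \chi_{\dR}(V\cap U^{\an}, \calE)$, as desired.
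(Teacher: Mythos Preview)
Your approach takes an unnecessary detour and contains a genuine error. The paper's intended argument is much simpler: apply Theorem~\ref{T:local index formula} twice with the \emph{same} pair $(X,Z)$, once to the open set $V$ and once to the open set $V \cap U^{\an}$. Since the points of $Z$ are vertices of $\Gamma_{X,Z}$ and retract only to themselves, $V \cap U^{\an}$ is again the retraction of an open subspace of $\Gamma_{X,Z}$, so the theorem applies; and since $(V \cap U^{\an}) \cap U^{\an} = V \cap U^{\an}$, both applications yield the same integral $\int_{V \cap U^{\an}} \chi(\calE)$.

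Your strategy of replacing $Z$ by $Z \setminus V$ and comparing the resulting densities fails at the step where you assert that the convergence polygons $\calN$ and $\calN'$ agree on $V \cap U^{\an}$. You justify this by saying that ``enlarging $Z$ outside of $V$ does not change $U_x$ for $x \in V$,'' but that is not what you are doing: passing from $Z$ to $Z \setminus V$ \emph{removes} the points $Z \cap V$, which lie \emph{inside} $V$. For $x \in V \cap U^{\an}$ near such a removed point $z$, the maximal disc $U_x$ (Definition~\ref{D:convergence polygon general}) is bounded by the distance to $z$ when computed relative to $Z$, but can be strictly larger relative to $Z \setminus V$; hence $\calN(x) \neq \calN'(x)$ in general, and likewise $\chi_x(\calE) \neq \chi'_x(\calE)$. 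This is precisely the phenomenon flagged in Remark~\ref{R:enlarge Z for index}, which notes that the support of $\chi(\calE,Z') - \chi(\calE,Z)$ is not predictable \emph{a priori}. There are further difficulties with your setup (for instance, $\calE$ is not even defined at the points of $Z \cap V$, so it makes no sense to speak of $\chi_{\dR}(V,\calE)$ ``in the geometry $(X, Z \setminus V)$'' without first extending $\calE$ across those points; and if $Z \subset V$ then Hypothesis~\ref{H:convergence general} fails for $(X, Z\setminus V)$), but the convergence-polygon mismatch is already fatal.
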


\begin{remark} \label{R:star}
Let $\Gamma$ be a strict skeleton for which the conclusion of Theorem~\ref{T:continuous2} holds.
For $v$ a vertex of $\Gamma$, define the \emph{star} of $v$, denoted $\star_v$,
as the union of $v$ and the interiors of the edges of $\Gamma$ incident to $v$.
Let $\pi_\Gamma: X \to \Gamma$ be the retraction onto $\Gamma$, through which $\calN$ factors. Under the hypotheses of Theorem~\ref{T:local index formula}, we have
\begin{equation} \label{eq:local index contributions}
\chi_{\dR}(\pi_\Gamma^{-1}(\star_v), \calE) = \chi_{v}(\calE), \qquad
\chi_{\dR}(\pi_\Gamma^{-1}(\star_v \cap \star_w), \calE) = 0.
\end{equation}
We can then recover Theorem~\ref{T:index formula} from \eqref{eq:local index contributions} by using Mayer-Vietoris (and GAGA over $K$; see
Remark~\ref{R:index conditions})
to write
\[
\chi_{\dR}(X^{\an}, \calE) = \sum_v \chi_{\dR}(\pi_\Gamma^{-1}(\star_v), \calE)
- \sum_{v \neq w} \chi_{\dR}(\pi_\Gamma^{-1}(\star_v \cap \star_w), \calE);
\]
one may similarly deduce Theorem~\ref{T:local index formula} from \eqref{eq:local index contributions}.
\end{remark}

\begin{remark} \label{R:enlarge Z for index}
For a given connection, one can extend the range of applicability of Theorem~\ref{T:local index formula} by enlarging the set $Z$; however, this depends on an understanding of how the two sides of the formula depend on $Z$. To this end, let us rewrite the equality as
\[
\chi_{\dR}(V, \calE, Z) = \int_{V \cap U^{\an}} \chi(\calE, Z)
\]
with $Z$ included in the notation.

Put $Z' = Z \cup \{z'\}$ for some $z' \in U(K)$.
Let $x' \in \Gamma_{X,Z}$ be the generic point of the open disc $U_{z'}$; by subdividing if necessary, we may view $x'$ as a vertex of $\Gamma_{X,Z}$. Then $\Gamma_{X,Z'}$ is the union of $\Gamma_{X,Z}$ with a single edge joining $x'$ to $z'$ within $U_{z'}$. 

Let $V$ be an open subset of $X^{\an}$ containing $z'$ which is the retraction of an open subspace of $\Gamma_{X,Z}$; it is then also a retraction of an open subspace of $\Gamma_{X,Z'}$.
We then have
\[
\chi_{\dR}(V, \calE, Z') - \chi_{\dR}(V, \calE, Z) = - n;
\]
namely, by Mayer-Vietoris this reduces to the case where $V$ is a small open disc around $z'$, in which case we may assume $\calE$ is trivial on $V$ and make the computational directly.

By this computation, Theorem~\ref{T:local index formula}, and the fact that $\chi_{z'}(\calE,Z) = 0$, we must also have
\[
\int_{(V \cap U^{\an}) \setminus \{z'\}} \left( \chi(\calE, Z') - \chi(\calE,Z) \right) = -n.
\]
From \eqref{eq:virtual local index}, we see that $\chi(\calE, Z') - \chi(\calE,Z)$ is supported within $U_{z'} \cup \{x'\}$. However, while one can easily compute the convergence polygon associated to $Z'$ from the one associated to $Z$ (see Example~\ref{exa:add pole} and Remark~\ref{R:cyclic change Z} for examples), we do not know how to predict \emph{a priori} where the support of $\chi(\calE, Z') - \chi(\calE,Z)$ will lie within $U_{z'} \cup \{x'\}$.
\end{remark}

\begin{remark}
One of the main reasons we have restricted attention to meromorphic connections on proper curves is that in this setting, Theorem~\ref{T:continuous2} ensures that $\chi_{x}(\calE) = 0$ for all but finitely many $x \in U^{\an}$. It is ultimately more natural to state Theorem~\ref{T:local index formula} for connections on open analytic curves, as is done in \cite[Theorem~3.8.10]{pulita-poineau5}; however, this requires some additional hypotheses to ensure that $\chi(\calE)$ is a finite measure.
\end{remark}

\begin{defn}
Assume $p>0$. 
A \emph{$p$-adic Liouville number} is an element $x \in \ZZ_p - \ZZ$ such that
\[
\liminf_{m \to \infty} \left\{ \frac{|y|}{m}: y \in \ZZ, y-x \in p^m \ZZ_p \right\} < +\infty.
\]
As in the classical case, $p$-adic Liouville numbers are always transcendental
\cite[Proposition VI.1.1]{dgs}.
\end{defn}

\begin{remark} \label{R:star2}
Assume $p>0$. The technical hypotheses of Theorem~\ref{T:local index formula}
are needed to guarantee the existence of the indices appearing in
\eqref{eq:local index contributions}. In case $\nabla$ has a regular singularity at $z \in Z$ with all exponents in $\ZZ_p$, these hypotheses include the condition that no two exponents of $\nabla$ at $z$ differ by a $p$-adic Liouville number; see Example~\ref{exa:Liouville} for a demonstration of the necessity of such a condition.
(Such hypotheses are not needed in Theorem~\ref{T:index formula} because there we only poles rather than essential singularities.)

Unfortunately, the full hypotheses are somewhat more complicated to state. They arise from the fact that with notation as in Remark~\ref{R:star}, one can separate off a maximal component of $\calE$ on $\pi_\Gamma^{-1}(\star_v \cap \star_w)$ which satisfies the Robba condition, to which one may associate some $p$-adic numbers playing the role of exponents; the hypothesis is that (for any particular $v,w$) no two of these numbers differ by a $p$-adic Liouville numbers. The difficulty is that the definition of these \emph{$p$-adic exponents}, due to Christol and Mebkhout (and later simplified by Dwork) is somewhat indirect; they occur as ``resonant frequencies'' for a certain action by the group of $p$-power roots of unity, which are hard to control except in some isolated cases where they are forced to be rational numbers (e.g., Picard-Fuchs equations, a/k/a Gauss-Manin connections, or connections arising from $F$-isocrystals in the theory of crystalline cohomology). See \cite[Chapter~13]{kedlaya-book} for more discussion.
\end{remark}

\begin{example} \label{exa:Liouville}
Assume $p>0$ and take $X = \PP^1_K$, $Z = \{0, \infty\}$. Take $\calE$ to be free of rank 1 with the action of $\nabla$ given by
\[
\nabla (f) = \lambda f \frac{dz}{z} + df
\]
for some $\lambda \in K$.
For $\alpha, \beta$ with $ 0< \alpha < \beta$, let $V$ be the open annulus $\alpha < |z| < \beta$. The 1-forms on $V$ are series
$\sum_{n=-\infty}^\infty c_n z^n \frac{dz}{z}$ 
such that for each $\rho \in (\alpha,\beta)$, $|c_n| \rho^n \to 0$ as $n \to \pm \infty$.

If $\lambda = 0$, then $|c_n| \rho^n \to 0$ for all $\rho \in (\alpha, \beta)$ if and only if $|c_n/n| \rho^n \to 0$ for all $\rho \in (\alpha,\beta)$, so every 1-form on $V$ with $c_0 = 0$ is in the image of $\nabla$.
Note that multiplying the generator of $\calE$ by $t$ has the effect of replacing $\lambda$ by $\lambda+1$; it follows that if $\lambda \in \ZZ$, then the kernel and cokernel of $\nabla$ on $V$ are both 1-dimensional, so $\chi_{\dR}(V, \calE) = 0$.

If $\lambda \in K - \ZZ$, then a 1-form is in the image of $\nabla$ on $V$ if and only if for each $\rho \in (\alpha,\beta)$, $|c_n/(n-\lambda)| \rho^n \to 0$ as $n \to \pm\infty$.
This holds if $\lambda$ is not a $p$-adic Liouville number (see \cite[\S VI.1]{dgs} or \cite[Proposition~13.1.4]{kedlaya-book}); otherwise, one shows that $\nabla$ has infinite-dimensional cokernel on $V$,
so $\chi_{\dR}(V, \calE)$ is undefined.
\end{example}

\begin{remark} \label{R:Liouville}
Example~\ref{exa:Liouville} provides an example showing that the equality 
$\chi_{\dR}(V, \calE) = \chi_{\dR}(V \cap U^{\an}, \calE)$
of Corollary~\ref{C:punctures} cannot hold without conditions on $p$-adic Liouville numbers. In this example, for all $\lambda$, $\chi_{\dR}(X^{\an}, \calE) = 0$ by
Lemma~\ref{L:index formula from irregularity}; but when $\lambda$ is a $p$-adic Liouville number,  $\chi_{\dR}(U^{\an}, \calE)$ is undefined.
\end{remark}

\begin{remark} \label{R:star3}
The net result of Remark~\ref{R:star2} is that in general, one can only view $\chi_x(\calE)$ as a \emph{virtual local index} of $\calE$ at $x$, not a true local index.
Nonetheless, this interpretation can be used to predict combinatorial properties of the convergence polygon which often continue to hold even without restrictions on $p$-adic exponents. For example, Theorem~\ref{T:monotonicity} corresponds to the fact that if $V$ is an open disc in $U^{\an}$, then the dimension of the cokernel of $\nabla$ on $V$ is nonnegative; this argument appears in the proof of Theorem~\ref{T:monotonicity} in the case $p=0$ (see Theorem~\ref{T:monotonicity appendix2}).
\end{remark}

\begin{remark}
In light of Remark~\ref{R:star3}, one might hope to establish some inequalities on 
$\chi_{x}(\calE)$. One might first hope to refine Theorem~\ref{T:virtual local index} by
analogy with \eqref{eq:local index formula complex},
by proving that the measure $\Delta h(\calN)$ is nonnegative;
however, this fails already in simple examples such as Example~\ref{exa:subharmonicity}.

On the other hand, since our running hypothesis is that $\chi(U) \leq 0$, Theorem~\ref{T:index formula} and Lemma~\ref{L:sum of virtual local indices} imply that 
\[
\sum_{x \in U^{\an}} \chi_{x}(\calE) = \chi_{\dR}(X^{\an}, \calE) \leq n \chi(U) \leq 0.
\]
One might thus hope to refine Theorem~\ref{T:virtual local index} by proving that
$\chi_{x}(\calE) \leq 0$ for all $x \in U^{\an}$. Unfortunately, this is not known (and may not even be safe to conjecture) in full generality, but see Theorem~\ref{T:virtual local index} for some important special cases.
\end{remark}

\begin{theorem} \label{T:virtual local index}
Choose $x \in U^{\an}$.
\begin{enumerate}
\item[(a)] Let $R(x)$ be the infimum of the radii of open discs in $U_x$ containing $x$.
If $\calN(x)$ has no slopes equal to $-\log R(x)$, then $\chi_{x}(\calE) = 0$.
\item[(b)]
If $x \in \Gamma_{X,Z}$, 
then $\chi_x(\calE) \leq 0$.
\item[(c)]
If $p=0$, then $\chi_x(\calE) \leq 0$.
\end{enumerate}
\end{theorem}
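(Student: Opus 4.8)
My plan is to treat the three parts separately, with part (a) as a combinatorial consequence of Theorem~\ref{T:monotonicity} together with the analysis of how $\partial_{\vec t}(h(\calN))$ behaves at a point which is generic for a disc, and parts (b), (c) as genuine index-theoretic statements obtained by localizing the global index formula. For part (a), the point is that if $\calN(x)$ has no slope equal to $-\log R(x)$, then every horizontal section which converges on the disc $U_{x,\rho}$ for $\rho$ slightly less than $R(x)$ already converges on a strictly larger disc, so one can read off from Theorem~\ref{T:continuous2} that the derivative of $\calN$ along the branch pointing into $U_x$ vanishes; combined with the fact that along every other branch at $x$ the disc $U_x$ plays no role, one sees $(\Delta h(\calN))_x$ receives no contribution from the "radial" direction, and a short bookkeeping argument with \eqref{eq:virtual local index} gives $\chi_x(\calE)=0$. (When $x\notin\Gamma_{X,Z}$, then $x$ lies in some disc and $R(x)$ is exactly the radius governing convergence there, so the hypothesis forces $\partial_{\vec t}(h(\calN))=0$ along the unique branch toward $\Gamma_{X,Z}$ as well, and $\chi_x(\calE)=-\Delta h(\calN)_x = 0$.)

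For part (b), the strategy is to reduce to the local index formula of Theorem~\ref{T:local index formula}, i.e.\ to the Poineau--Pulita computation $\chi_{\dR}(\pi_\Gamma^{-1}(\star_v),\calE)=\chi_v(\calE)$ from Remark~\ref{R:star}, and then to argue that $\chi_{\dR}(\pi_\Gamma^{-1}(\star_x),\calE)\le 0$ directly. The latter should follow because $\pi_\Gamma^{-1}(\star_x)$ is an open subspace whose "interesting" part is a finite union of open discs and annuli glued at $x$, the de Rham complex on each disc has $H^0$ of dimension at most $n$ and $H^1$ governed by a cokernel which — once one strips off the part of $\calE$ satisfying the Robba condition — contributes nonnegatively to the cokernel of $\nabla$; the residual Robba part is exactly where one needs the technical $p$-adic exponent hypotheses of Remark~\ref{R:star2}, and where in characteristic $0$ there is nothing to check. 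I would phrase the inequality as: $H^0_{\dR}$ has dimension $\le n\cdot(\text{number of local components})$ while $H^1_{\dR}$ has dimension $\ge n(\text{number of independent loops}) + (\text{positive contributions from each irregular or non-Robba piece})$, and then observe that $n(2-2g(C_x)-\val_\Gamma(x))$ is exactly the Euler characteristic one would get if $\calE$ were trivial, so subtracting off the (nonnegative) defect $(\Delta h(\calN))_x$ can only decrease $\chi_x(\calE)$ below $0$ — except that one must also verify the defect is nonnegative, which is the content of the Dwork transfer direction of Theorem~\ref{T:monotonicity} applied branch by branch. For part (c), when $p=0$ one invokes part (b) for $x\in\Gamma_{X,Z}$ and, for $x\notin\Gamma_{X,Z}$, one uses Theorem~\ref{T:monotonicity}: every branch at such $x$ points away from $\Gamma_{X,Z}$ except one, along that one branch the slope is $\le 0$ by monotonicity, and along the others the derivative contributes with the opposite sign, so $(\Delta h(\calN))_x \ge 0$ and hence $\chi_x(\calE)=-(\Delta h(\calN))_x\le 0$.

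The main obstacle I expect is part (b) in residual characteristic $p>0$: the inequality $\chi_x(\calE)\le 0$ cannot be deduced from monotonicity (which fails for $p>0$, per the discussion after Theorem~\ref{T:monotonicity}), so one is forced to work with the actual de Rham cohomology of $\pi_\Gamma^{-1}(\star_x)$ and the Christol--Mebkhout theory of $p$-adic exponents. The delicate point is that the cokernel of $\nabla$ on an open annulus need not be finite-dimensional when the relevant $p$-adic exponents differ by Liouville numbers, so one either restricts to the hypotheses of Theorem~\ref{T:local index formula} (in which case the formula $\chi_{\dR}(\pi_\Gamma^{-1}(\star_x),\calE)=\chi_x(\calE)$ holds and the inequality is a statement purely about the sign of an honest Euler characteristic, reducible to the $H^2=0$ vanishing plus semicontinuity of $\dim H^0$), or one argues that $\chi_x(\calE)$, being a purely combinatorial quantity built from $\calN$, is insensitive to the Liouville pathology and so the inequality, once proved under the generic hypotheses, persists by a density/specialization argument in the exponents. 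I would pursue the first route and cite \cite[Theorem~3.5.2]{pulita-poineau5} for the identification, leaving the inequality $H^1_{\dR}\text{-dimension bound}$ as the one genuinely new estimate — which in turn follows from the fact that on a disc the operator $\nabla$ restricted to the maximal trivial submodule is surjective onto forms with that module's coefficients, while on the irregular complement the index is $-\Irr$, a nonpositive contribution.
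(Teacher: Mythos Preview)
Your approach to part (c) contains a genuine gap. You propose to deduce $(\Delta h(\calN))_x \geq 0$ for $x \notin \Gamma_{X,Z}$ from Theorem~\ref{T:monotonicity}, but your accounting of the signs is reversed. Theorem~\ref{T:monotonicity} asserts $\partial_{\vec t}(\calN) \leq 0$ for every branch $\vec t$ not lying along $\Gamma_{X,Z}$; at a point $x \notin \Gamma_{X,Z}$, \emph{no} branch lies along $\Gamma_{X,Z}$, so monotonicity gives $\partial_{\vec t}(h(\calN)) \leq 0$ for \emph{all} branches, including both the one toward $\Gamma_{X,Z}$ and the ones pointing into sub-discs. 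Summing yields $(\Delta h(\calN))_x \leq 0$, which is the wrong sign. Monotonicity along each branch simply does not control the sign of the Laplacian; subharmonicity is a genuinely stronger statement. The paper instead proves $(\Delta h_i(\calN))_x \geq 0$ directly by a cohomological argument (Lemma~\ref{L:convexity} in the appendix): one relates the outward slope along each branch to $\dim_K H^1$ of the restriction of $M$ to the corresponding disc via Lemma~\ref{L:H1 slope}, and then shows the required inequality $\dim_K H^1(M_{[0,\delta)}) \geq \sum_w \dim_K H^1(M_{w,[0,\gamma)})$ by proving surjectivity of the restriction map on $H^1$, using finite-dimensionality (available since $p=0$) and the open mapping theorem.

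For part (b), your route through the local index formula $\chi_{\dR}(\pi_\Gamma^{-1}(\star_x),\calE) = \chi_x(\calE)$ is problematic because that identification (Theorem~\ref{T:local index formula}) requires the $p$-adic Liouville hypotheses of Remark~\ref{R:star2} when $p>0$, which are not assumed in (b). The paper's argument is more direct and avoids this: for $x \in \Gamma_{X,Z}$ one has $R(x) = 1$, so $-\log R(x) = 0$; one then decomposes $\calE$ near $x$ according to whether the slopes of $\calN(x)$ are positive or zero. The part with all positive slopes has $\chi_x = 0$ by the mechanism underlying (a), while the zero-slope (Robba) part is shown to contribute nonpositively. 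This is close in spirit to your ``strip off the Robba part'' remark, but it is carried out at the level of the combinatorics of $\calN$ and the decomposition theorems in \cite{kedlaya-radii}, not via an Euler-characteristic computation on $\pi_\Gamma^{-1}(\star_x)$. Your sketch for (a) is in the right direction but vague; the paper either cites \cite[Proposition~6.2.11]{pulita-poineau3} or reduces (via decomposition) to the case where all slopes exceed $-\log R(x)$ and invokes \cite[Theorem~5.3.6]{kedlaya-radii}.
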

\begin{proof}
For (a), see \cite[Proposition~6.2.11]{pulita-poineau3} or \cite{baldassarri-kedlaya}
(or reduce to the case where $\calN(x)$ has all slopes greater than $-\log R(x)$ and then apply \cite[Theorem~5.3.6]{kedlaya-radii}).
A similar argument implies (b) because in this case, $R(x) = 1$ and the zero slopes are forced to make a nonpositive contribution to the index.
For (c), see Theorem~\ref{T:convexity}.
\end{proof}

\begin{remark}
The proof of Theorem~\ref{T:virtual local index} can also be used to quantify the extent to which $\calN$ fails to factor through the retract onto $\Gamma_{X,Z}$, and hence to help identify a suitable skeleton $\Gamma$ for which the conclusion of Theorem~\ref{T:continuous2} holds.
To be precise, for $x \in \Gamma_{X,Z}$, if the restriction of $\calN$ to $\Gamma_{X,Z}$ is harmonic at $x$,
then $\calN$ is constant on the fiber at $x$ of the retraction of $X^{\an}$ onto $\Gamma_{X,Z}$. See also \cite[\S 6.3]{pulita-poineau3}.
\end{remark}

\begin{example} \label{exa:subharmonicity}
Let $h$ be a nonzero rational function on $X$, and take $Z$ to be the pole locus of $h$. Let $\calE$ be the free bundle on a single generator $\bv$ equipped with the connection 
\[
\nabla(f \bv) = \bv \otimes (df + f\,dh).
\]
For each $z \in Z$, $\Irr_z(\nabla)$ equals the multiplicity of $z$ as a pole of $h$.
By Lemma~\ref{L:sum of virtual local indices}, 
\[
\sum_{x \in U^{\an}} \chi_x(\calE) = \chi(U) - m
\]
where $m$ is the number of poles of $h$ counted with multiplicity.

Suppose now that there exists a point $x \in U^{\an}$ with $g(C_x) > 0$.
By multiplying $h$ by a suitably large element of $K$, we can ensure that $\calN(x)$ has positive slope. In this case, by Theorem~\ref{T:virtual local index} we must have 
\[
\chi_x(\calE) = 0 > 2 - 2g(C_x) - \val_{\Gamma}(x).
\]
In particular, while $\int \Delta h(\calN) = \sum_{z \in Z} \Irr_z(\nabla) \geq 0$,
the measure $\Delta h(\calN)$ is not necessarily nonnegative.
\end{example}

\section{Ramification of finite morphisms}
\label{sec:ramification}

\begin{hypothesis} \label{H:hypothesis}
Throughout \S\ref{sec:ramification},
let $f: Y \to X$ be a finite flat morphism of degree $d$ of smooth, proper, geometrically connected curves over $K$ (a nonarchimedean field of characteristic $0$),
and suppose that the restriction of $f$ to $U$ (the complement of a finite set $Z$ of closed points of $X$) is \'etale and Galois with Galois group $G$.

Let $\pi: G \to \GL(V)$ be a faithful representation of $G$ on an $n$-dimensional vector space $V$ over $K$. Note that since $G$ is finite and $\pi$ is faithful (and $K$ is of characteristic 0), the Tannakian category generated by $V$ contains all finite-dimensional $K$-linear representations of $G$, including the regular representation.
\end{hypothesis}

\begin{defn} \label{D:rep to connection}
Equip $\calE_V = \calO_Y \otimes_{K} V^\vee$ with the diagonal action of $G$ induced by the Galois action on $\calO_Y$ and the action via $\pi^\vee$ on $V^\vee$.
By faithfully flat descent, $G$-invariant sections of $\calE_V$ can be identified with sections of a vector bundle $\calE$ on $U$; moreover, the trivial connection on $\calE_V$ induces a connection $\nabla$ on $\calE$.
We are thus in the situation of Hypothesis~\ref{H:convergence general}.
\end{defn}

\begin{remark}
One way to arrive at Hypothesis~\ref{H:hypothesis} is to start with a non-Galois cover $g: W \to X$, let $f$ be the Galois closure, and let $\pi$ be the representation of $G$ induced by the trivial representation of the subgroup of $G$ fixing $W$. In this case, $\calE$ is just the pushforward of the trivial connection on $g^{-1}(U)$.
\end{remark}

\begin{remark} \label{R:tame exponent calculation}
Following up on the previous remark, note that $(\calE, \nabla)$ is always a subobject of the pushforward of the trivial connection on $f^{-1}(U)$. Since the latter is the Gauss-Manin connection associated to a smooth proper morphism (of relative dimension 0), it is everywhere regular by virtue of the Griffiths monodromy theorem \cite[Theorem~14.1]{katz-turrittin}. This implies in turn that for $z \in Z$, our original connection satisfies $\Irr_z(\nabla) = 0$ and the exponents of $\nabla$ at $z$ are rational.

To say more, let $N$ be the order of the inertia subgroup of $G$ corresponding to some element of $f^{-1}(\{z\})$; we then claim that the exponents of $\nabla$ at $z$ belong to $\frac{1}{N} \ZZ$ and generate this abelian group. To check this, we may
assume that $K$ is algebraically closed. By the Cohen structure theorem, the field $\Frac(\widehat{\calO}_{X,z})$ may be identified with $K((t))$. The vector space $\calE_z = \calE \otimes_{\calO_U} \Frac(\widehat{\calO}_{X,z})$ over this field splits compatibly with $\nabla$ as a direct sum
\[
\bigoplus_{y \in f^{-1}(\{z\})} \Frac(\widehat{\calO}_{Y,y}),
\]
each summand of which is isomorphic to $K((t^{1/N}))$. If we split this summand further
as $\bigoplus_{i=0}^{N-1} t^{i/N} K((t))$, then the $i$-th summand contributes an exponent  congruent to $i/N$ modulo $\ZZ$. 
In particular, when $\pi$ is the regular representation, the exponents of $\nabla$ at $z$ belong to $\frac{1}{N} \ZZ$ and fill out the quotient
$\frac{1}{N} \ZZ/\ZZ$. For a general $\pi$, on one hand, $\pi$ occurs as a summand of the regular representation, so the exponents of $\nabla$ at $z$ again belong to $\frac{1}{N} \ZZ$. On the other hand, the regular representation occurs as a summand of some tensor product of copies of $V$ and its dual (see Hypothesis~\ref{H:hypothesis}), so the group generated by the exponents must also fill out the quotient
$\frac{1}{N} \ZZ/\ZZ$ (although the exponents themselves need not).
\end{remark}

\begin{defn} \label{D:branch locus}
For $L$ a nonarchimedean field over $K$, 
let $\CC_L$ denote a completed algebraic closure of $L$,
and let $B_L$ be the subset of $X_L$ for which $x \in B_L$ if and only if the preimage of $x$ in $Y_{\CC_L}^{\an}$ does not consist of $d$ distinct points.
(For a demonstration of this definition, see Example~\ref{exa:cyclic}.)
\end{defn}

We have the following remark about $B_L$ echoing an earlier observation about $\Gamma_{X,Z}$ (see Definition~\ref{D:minimal strict skeleton}).
\begin{remark} \label{R:branch locus base extension}
Let $L'$ be a nonarchimedean field containing $L$. Then $B_{L'}$ is contained in the inverse image of $B_L$ under the restriction map $X_{L'}^{\an} \to X_L^{\an}$, but this containment is typically strict if $L'$ is not the completion of an algebraic extension of $L$. This is because $B_{L'}$ only contains points of types 2 or 3 (except for the points of $Z$), whereas the inverse image of $B_L$ typically also contains some points of type 1.
\end{remark}

\begin{example} \label{exa:cyclic1}
Take $K = \QQ_p(\zeta_p)$,
$X = \PP^1_K$, $Z = \{0, \infty\}$, $Y = \PP^1_K$, let $f: Y \to X$ be the map
$z \mapsto z^p$, identify $G$ with $\ZZ/p\ZZ$ so that $1 \in \ZZ/p\ZZ$ corresponds to the map $z \mapsto \zeta z$ on $f^{-1}(U)$, 
and let $\pi: G \to \GL_1(K)$ be the character taking 1 to $\zeta_p^{-1}$. 
Then $\calE$ is free on a single generator $\bv$ satisfying
$\nabla(\bv) = -\frac{1}{p} z^{-1} \bv \otimes dz$.
This is also the connection obtained from Example~\ref{exa:p-th root} by applying
Remark~\ref{R:p1 case}.

Put $\omega = p^{-1/(p-1)} = |\zeta_p - 1|$.
For each $x \in U$, 
we may define the \emph{normalized diameter} of $x$ as an element of $[0,1]$ defined as follows:
choose an extension $L$ of $K$ such that $x$ lifts to some $\tilde{x} \in U(L)$,
then take the infimum of all $\rho \in (0,1]$ such that $U_{\tilde{x},\rho}$ meets
the inverse image of $x$ in $U_L^{\an}$ (or $1$ if no such $\rho$ exists).
With this definition, for any $L$, the set $B_L$ consists of $Z$ plus all points
with normalized diameter in $[\omega^p, 1]$ (see for example \cite[Lemma~10.2.2]{kedlaya-book}).
\end{example}

\begin{lemma} \label{L:covering space}
Suppose that $L$ is algebraically closed.
Let $V$ be an open subset of $X_L^{\an}$ which is disjoint from $B_L$. Then the map 
$f^{-1}(V) \to V$ of topological spaces is a covering space map.
\end{lemma}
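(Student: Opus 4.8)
The plan is to reduce the statement to a local fact about the behavior of $f$ near the points of $X_L^{\an} \setminus B_L$, using the characterization of $B_L$ as the locus where the fiber fails to have $d$ geometric points. Since $L$ is algebraically closed and $f$ is finite flat of degree $d$, the fiber over any point $x \in X_L^{\an}$ consists of at most $d$ points (as a point of $Y_L^{\an}$ has a residue field which, together with the structure of finite maps of analytic curves, limits the fiber cardinality), and $x \notin B_L$ precisely means this fiber has exactly $d$ points $y_1, \dots, y_d$.

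First I would show that for $x \notin B_L$ with fiber $\{y_1,\dots,y_d\}$, the map $f$ is a local homeomorphism near each $y_j$. This is where one invokes the local structure theory of finite morphisms of Berkovich curves: near a point where the fiber is ``as large as possible,'' the map is unramified in the appropriate analytic sense, and a finite unramified (hence étale, in char $0$) morphism of analytic curves is a local isomorphism onto its image. Concretely, one can work on a semistable model or use the description of $f$ on small affinoid or disc/annulus neighborhoods: the hypothesis that the fiber has $d$ distinct points forces the sum of local degrees to equal $d$ with each local degree equal to $1$, so $f$ carries a suitable neighborhood $W_j$ of $y_j$ homeomorphically onto a neighborhood of $x$.

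Next I would globalize: choose, for each $x \in V$, disjoint neighborhoods $W_1, \dots, W_d$ of the $y_j$ in $f^{-1}(V)$ each mapped homeomorphically by $f$ onto a common neighborhood $V_x \subseteq V$ of $x$, shrinking $V_x$ so that $f^{-1}(V_x) = W_1 \sqcup \cdots \sqcup W_d$; this last point uses properness/finiteness of $f$ to rule out extra preimage components sneaking in (a limit argument, or the fact that $f$ is a closed map with finite fibers). Then $V_x$ is an evenly covered neighborhood of $x$, and as $x$ ranges over $V$ these give the required local triviality, so $f^{-1}(V) \to V$ is a covering space map.

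The main obstacle is the local step: making precise and rigorous the claim that ``the fiber over $x$ having $d$ distinct points'' implies $f$ is a local homeomorphism at each preimage. One must handle all four types of points of $x$ and of the $y_j$, control what happens at points of type $2$ where several branches meet (the residue curves $C_{y_j} \to C_x$ must be isomorphisms, not just separable of total degree $d$), and verify the clean separation $f^{-1}(V_x) = \bigsqcup W_j$ rather than merely a local-section statement. This is precisely the kind of fact that the cited references (\cite{bpr}, and the foundational theory underlying \cite{kedlaya-radii}) establish for finite maps of Berkovich curves, and I would appeal to that theory rather than reprove it.
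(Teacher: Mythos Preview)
Your proposal is correct and follows essentially the same strategy as the paper: for $x \in V$ with fiber $\{y_1,\dots,y_d\}$, find neighborhoods $W_j$ of the $y_j$ that are finite over a common neighborhood $V'$ of $x$, note that the local degrees sum to $d$ and hence are all equal to $1$, and conclude that each $W_j \to V'$ is an isomorphism. The paper resolves your ``main obstacle'' cleanly in one stroke: rather than appealing to the general structure theory of finite maps of Berkovich curves or worrying about the point types separately, it invokes Berkovich's theorem that the local ring of $X_L^{\an}$ at $x$ is henselian \cite[Theorem~2.1.5]{berkovich2}, which immediately produces the neighborhoods $W_y$ finite \'etale over their images, uniformly for all point types.
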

\begin{proof}
For each $x \in V$, the local ring of $X_L^{\an}$ at $x$ is henselian 
\cite[Theorem~2.1.5]{berkovich2}; hence for each $y \in f^{-1}(\{x\})$, we can find a neighborhood $W_y$ of $y$ in $f^{-1}(V)$ which is finite \'etale over its image in $X_L^{\an}$. Since $Y_L^{\an}$ is Hausdorff, we can shrink the $W_y$ so that they are pairwise disjoint and all have the same image $V'$ in $X_L^{\an}$.
The maps $W_y \to V'$ are then finite \'etale of some degrees adding up to $\deg(f) = d$.
Since $\#f^{-1}(\{x\}) = d$ by hypothesis, these degrees must all be equal to 1; hence each map $W_y \to V'$ is actually an isomorphism of analytic spaces, and in particular a homeomorphism of topological spaces.
\end{proof}

\begin{theorem} \label{T:ramification}
For $L$ a nonarchimedean field over $K$ and $x \in U(L)$, 
$e^{-s_1(\calN(x))}$ equals the supremum of $\rho \in (0,1]$ such that
$U_{x,\rho} \cap B_L  = \emptyset$.
\end{theorem}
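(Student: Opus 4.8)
The plan is to reduce the statement to a local computation about the differential module $\calE_x$ over the open disc $U_x$, and then to interpret the radius of convergence of horizontal sections in terms of when the covering $f$ splits. First I would fix $x \in U(L)$ and, after a harmless base extension, assume $L$ is algebraically closed so that Lemma~\ref{L:covering space} applies. Write $\rho_0 \in (0,1]$ for the supremum of those $\rho$ with $U_{x,\rho} \cap B_L = \emptyset$; I want to show $e^{-s_1(\calN(x))} = \rho_0$.

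For the inequality $e^{-s_1(\calN(x))} \geq \rho_0$: on the disc $U_{x,\rho}$ for $\rho < \rho_0$, Lemma~\ref{L:covering space} says $f^{-1}(U_{x,\rho}) \to U_{x,\rho}$ is a covering space map, and since $U_{x,\rho}$ is a disc (hence simply connected in the relevant Berkovich-topological sense), the covering is trivial: $f^{-1}(U_{x,\rho})$ is a disjoint union of $d$ copies of $U_{x,\rho}$. Restricting the trivial connection on $\calE_V = \calO_Y \otimes_K V^\vee$ to any one of these copies gives a trivial differential module, and taking $G$-invariants (Definition~\ref{D:rep to connection}) exhibits $\calE$ itself as a trivial connection on $U_{x,\rho}$ — so all $n$ horizontal sections at $x$ converge on $U_{x,\rho}$. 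Letting $\rho \to \rho_0$ gives $s_1(\calN(x)) \leq -\log \rho_0$, i.e. $e^{-s_1(\calN(x))} \geq \rho_0$.

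For the reverse inequality $e^{-s_1(\calN(x))} \leq \rho_0$: here I would argue contrapositively. Suppose $\calE$ is trivial on $U_{x,\rho}$ for some $\rho > \rho_0$; I must produce a contradiction with $U_{x,\rho} \cap B_L \neq \emptyset$. The point is that a basis of horizontal sections of $\calE$ over $U_{x,\rho}$ pulls back, via faithful flatness and the Tannakian remark in Hypothesis~\ref{H:hypothesis} (the regular representation lies in the category generated by $V$), to trivialize the pushforward of the trivial connection on $f^{-1}(U)$ over $U_{x,\rho}$; equivalently the $\calO_X$-algebra $f_*\calO_Y$ becomes, after restriction to $U_{x,\rho}$, a product of $d$ copies of $\calO_{U_{x,\rho}}$ as a differential algebra, forcing $f^{-1}(U_{x,\rho})$ to be $d$ disjoint copies of the disc and hence $U_{x,\rho} \cap B_L = \emptyset$ by definition of $B_L$ — contradiction. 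One should be slightly careful that triviality of $\calE$ (a single faithful representation's worth of sections) genuinely recovers triviality of the whole regular representation; this is exactly where one invokes that tensor constructions and subquotients of $V^{\oplus\bullet}$ stay trivial, using that a subobject or tensor product of trivial differential modules on a disc is trivial.

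The main obstacle I expect is this last step — promoting triviality of $\calE$ to the statement that the finite cover literally splits over the disc — which requires handling the passage through the Tannakian category and checking that "trivial differential module over a disc" is preserved under the necessary operations (tensor, dual, subobject, direct summand); over a disc this is manageable because the category of trivial differential modules is closed under all these operations and is a full subcategory, but the bookkeeping connecting $G$-equivariant bundles on $Y$ with differential modules on $X$ needs care. A secondary point needing attention is the base-change compatibility: $B_L$ does not pull back naively along arbitrary extensions (Remark~\ref{R:branch locus base extension}), but since $\calN(x)$ and $U_{x,\rho}$ are themselves base-change compatible and we only ever need $L$ algebraically closed for the covering-space argument, the discrepancy is confined to type-1 points and does not affect the supremum $\rho_0$.
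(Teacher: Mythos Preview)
Your proposal is correct and follows essentially the same approach as the paper: reduce to $L$ algebraically closed, use Lemma~\ref{L:covering space} plus simple-connectedness of the disc for one inequality, and for the converse pass to the regular representation via the Tannakian remark and then use the algebra structure on horizontal sections of $f_*\calO_Y$ to split the cover. The paper makes the last step slightly more explicit by observing that the horizontal sections of the pushforward form a finite reduced $L$-algebra of rank $d$ (since products of horizontal sections are horizontal), which over an algebraically closed $L$ must be $L^d$; this is precisely the content of your ``differential algebra'' claim.
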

\begin{proof}
We may assume without loss of generality that $L$ is itself algebraically closed.
If $U_{x,\rho} \cap B_L = \emptyset$, then by Lemma~\ref{L:covering space},
the map $f^{-1}(U_{x,\rho}) \to U_{x,\rho}$ is a covering space map of topological spaces.
Since $U_{x,\rho}$ is contractible and hence simply connected, $f^{-1}(U_{x,\rho})$ splits topologically as a disjoint union of copies of $U_{x,\rho}$. From this, it follows easily that the restriction of $\calE$ to $U_{x,\rho}$ is trivial. 

Conversely, suppose that the restriction of $\calE$ to $U_{x,\rho}$ is trivial.
Then the same holds when $V$ is replaced by any representation in the Tannakian category
generated by $V$; since $G$ is finite and $\pi$ is faithful, this includes the regular representation. That is, the pushforward of the trivial connection on $f^{-1}(U)$ has trivial restriction to $U_{x,\rho}$.

In addition to the connection, the sections of this restriction inherit from $\calO_Y$ a multiplication map, and the product of two horizontal sections is again horizontal. Consequently, the horizontal sections form a finite reduced $L$-algebra of rank $d$; since $L$ is algebraically closed, this algebra must split as a direct sum of $d$ copies of $L$. This splitting corresponds to a splitting of $f^{-1}(U_{x,\rho})$ into $d$ disjoint sets, proving that $U_{x,\rho} \cap B_L = \emptyset$.
\end{proof}

\begin{remark}
One may view Theorem~\ref{T:ramification} as saying that under a suitable normalization, $e^{-s_1(\calN(x))}$ measures the distance from $x$ to $B_L$. 
This suggests the interpretation of $B_L$ 
as an ``extended ramification locus'' of the map $f$;
for maps from $\PP^1_K$ to itself, this interpretation has been adopted
in the context of nonarchimedean dynamics
(e.g., see \cite{faber1, faber2}).
However, this picture is complicated by Remark~\ref{R:branch locus base extension}; roughly speaking, even for $x \in B_L$, the ``distance from $x$ to itself'' must be interpreted as a nonzero quantity.
In any case, Theorem~\ref{T:ramification} suggests the possibility of relating the full convergence polygon to more subtle measures of ramification, such as those considered recently by Temkin \cite{temkin1, temkin2}.
One older result in this direction is the theorem of Matsuda and Tsuzuki; see Theorem~\ref{T:Matsuda} below.
\end{remark}

\begin{theorem} \label{T:Matsuda}
Assume that $K$ has perfect residue field of characteristic $p>0$.
Suppose that $x \in \Gamma_{X,Z}$ is of type 2, choose $y \in f^{-1}(\{x\})$, and suppose that $\calH(y)$ is unramified over $\calH(x)$. Let $\kappa_x, \kappa_y$ be the residue fields of  the nonarchimedean fields $\calH(x), \calH(y)$. Let $\overline{\pi}$ be the restriction of $\pi$ along the identification of $H := \Gal(\kappa_y/\kappa_x) \cong \Gal(\calH(y)/\calH(x))$ with the stabilizer of $y$ in $G$.
\begin{enumerate}
\item[(a)]
The polygon $\calN(x)$ is zero.
\item[(b)]
Let $\vec{t}$ be a branch of $X$ at $x$, and let $v$ be the point on $C_x$ corresponding to
$\vec{t}$ (see Definition~\ref{D:branch}).
Then $\partial_{\vec{t}}(\calN)$ computes the wild Hasse-Arf polygon of $\overline{\pi}$ at $v$, i.e., the Newton polygon in which the slope $s \geq 0$ occurs with multiplicity $\dim_K(V^{H^{s+}}/V^{H^s})$. In particular, $\partial_{\vec{t}}(h(\calN))$ computes the Swan conductor of $\overline{\pi}$ at $v$.
\end{enumerate}
\end{theorem}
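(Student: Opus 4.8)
The plan is to handle both parts by localizing around the branch $\vec{t}$ and reducing to the classical relationship between $p$-adic differential modules on annuli and ramification filtrations. First I would reduce to the case $K$ algebraically closed: both sides of the asserted equalities are insensitive to replacing $K$ by a completed algebraic closure ($\calN$ by base compatibility, the wild Hasse--Arf polygon at $v$ by the good behavior of upper-numbering ramification under such extensions), much as in Remark~\ref{R:tame exponent calculation}. Writing $H$ for the stabilizer of $y$, the hypothesis that $\calH(y)/\calH(x)$ is unramified makes $\kappa_y/\kappa_x$ a separable $H$-Galois extension, so that $C_y\to C_x$ is a generically $H$-Galois cover of smooth proper curves over the residue field of $K$; this is the object whose ramification at the point $v$ will govern everything.

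For part (a), I would observe that $\calE_x:=\calE\otimes_{\calO_U}\calH(x)$ is a direct summand of the differential module underlying the finite \'etale $\calH(x)$-algebra $(\calO_Y\otimes_{\calO_U}\calH(x))\otimes_K V^\vee$, and that — because $\calH(y)/\calH(x)$ is unramified — this algebra becomes trivial after a finite \emph{unramified} extension of the complete field $\calH(x)$. Since unramified base extension does not shrink radii of convergence and a trivial differential module satisfies the Robba condition, $\calE_x$ satisfies the Robba condition at $x$; translating back through the relation between $\calE_x$ and $\calN(x)$ (as set up in \cite{kedlaya-book} and \cite{baldassarri-kedlaya}) gives $\calN(x)=0$. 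This genuinely uses separability of $\kappa_y/\kappa_x$: an inseparable residue extension (as for $z\mapsto z^p$, cf.\ Example~\ref{exa:cyclic1}) does not trivialize over an unramified extension and indeed produces $\calN(x)\neq 0$.

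For part (b), by Definition~\ref{D:branch} and Theorem~\ref{T:continuous2} the polygon $\partial_{\vec{t}}(\calN)$ is the slope of $\calN$ along an edge of a suitable skeleton issuing from $x$ in the direction $\vec{t}$, so it depends only on $\calE$ on an arbitrarily thin half-open annulus $A$ abutting $x$ and pointing toward $v$; concretely, after choosing a coordinate, $\calE|_A$ is a differential module over the Robba ring whose vector of convergence breaks as $|u|\to 1$ is exactly $\partial_{\vec{t}}(\calN)$, and passing to the residue field at that boundary identifies the relevant completed local field as $F=\kappa_{x,v}\cong\overline k((\overline u))$ (with $\overline u$ a local parameter of $C_x$ at $v$). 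Over $A$, $f$ restricts to the finite cover of annuli lifting the separable extension $\kappa_{y,w}/\kappa_{x,v}$ of complete local fields (for $w$ a place of $C_y$ over $v$), whose Galois group is the decomposition group $D_v\subseteq H$; hence, taking $H$-invariants of the pushforward of the trivial connection along this cover tensored with $V^\vee$, the module $\calE|_A$ is precisely the one attached to the representation of $\mathrm{Gal}(F^{\mathrm{sep}}/F)$ that factors as $\mathrm{Gal}(F^{\mathrm{sep}}/F)\to D_v\hookrightarrow H\xrightarrow{\overline\pi}\GL(V)$ (the first map being the surjection onto the decomposition group). At this point I would invoke the Matsuda--Tsuzuki theorem: for such a finite-image representation, the Newton polygon of convergence breaks of the associated differential module over the Robba ring equals the Hasse--Arf polygon of the representation, in which the slope $s\ge 0$ occurs with multiplicity $\dim(V^{D_v^{s+}}/V^{D_v^s})$ for the upper-numbering filtration, with total height the Swan conductor. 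Since this filtration is read off from $D_v$, the polygon produced is exactly the wild Hasse--Arf polygon of $\overline\pi$ at $v$, and $\partial_{\vec{t}}(h(\calN))=\mathrm{Sw}_v(\overline\pi)$. The proof of the Matsuda--Tsuzuki identity itself (as in \cite{kedlaya-book}) decomposes $\calE|_A$ along the breaks of the ramification filtration, treats the tame part (slope $0$, because the tame ramification indices are prime to $p$ and the resulting rank-one exponents lie in $\ZZ_p$, cf.\ Example~\ref{exa:logarithmic parameter}) and the wild part (rank-one Artin--Schreier--Witt modules, where the slope equals the break, a computation in the spirit of Example~\ref{exa:exponential2}) separately, and uses the Hasse--Arf integrality theorem to match the breaks with the integrality in Theorem~\ref{T:continuous2}.

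The main obstacle is not the Matsuda--Tsuzuki identity, which is classical, but the two translations bracketing it: (i) identifying $\partial_{\vec{t}}(\calN)$ — a derivative of the Berkovich convergence polygon at a type-$2$ point — with the differential Swan data of the localized module over the Robba ring, together with the companion fact used in part (a) that triviality over an unramified extension forces the Robba condition; and (ii) matching $\calE|_A$ with the correct $\mathrm{Gal}(F^{\mathrm{sep}}/F)$-representation, which requires care about exactly which completed local field and which decomposition group occur, and about the unit-root/overconvergence hypotheses implicit in working over the bounded Robba ring. These are precisely the points where the apparatus of \cite{kedlaya-book}, supplemented by \cite{baldassarri-kedlaya}, carries the load, and passing from the first slope to the full polygon is what forces the appeal to Hasse--Arf.
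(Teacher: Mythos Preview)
Your proposal is correct and follows the same line as the references the paper cites (\cite{tsuzuki-index}, \cite{matsuda}, \cite{kedlaya-overview}); the paper itself gives no argument beyond those citations, so your sketch is in fact considerably more detailed than what appears here. The two translation steps you flag as the crux---identifying $\partial_{\vec{t}}(\calN)$ with the break polygon of the restriction to a boundary annulus, and matching $\calE|_A$ with the differential module attached to the residual Galois representation---are exactly the content that the cited references supply.
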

\begin{proof}
See \cite{tsuzuki-index}, \cite{matsuda}, or \cite{kedlaya-overview}.
\end{proof}

\begin{example} \label{exa:cyclic}
Set notation as in Example~\ref{exa:cyclic1}, except take $Z = \{0, 1, \infty\}$.
Let $x \in \Gamma_{X,Z}$ be the generic point of the disc $|z - 1| \leq \omega^p$,
which we can also write as $|u| \leq 1$ for $u = (z-1)/(\zeta_p - 1)^p$.
 Let $y$ be the unique preimage of $x$ in $Y$.
Let $t$ be the coordinate $(z-1)/(\zeta_p - 1)$ on $Y$; then
$\kappa_{x} = \FF_p(\overline{u})$
while
$\kappa_{y} = \FF_p(\overline{t})$
where $\overline{t}^p - \overline{t} = \overline{u}$.
For $\vec{t}$ the branch of $x$ towards $x_{1}$, we have $\partial_{\vec{t}}(h(\calN)) = 1$; as predicted by Theorem~\ref{T:Matsuda}, this equals the Swan conductor of the residual extension at $\infty$. 
\end{example}

\begin{remark} \label{R:cyclic change Z}
In Example~\ref{exa:cyclic}, it is necessary to include 1 in $Z$ to force $x$ into $\Gamma_{X,Z}$, so that Theorem~\ref{T:Matsuda} applies; otherwise, we would have
$\partial_{\vec{t}}(h(\calN)) = 0$ as in Example~\ref{exa:p-th root}. This provides an explicit example of the effect of enlarging $Z$ on the behavior of $\calN$, as described in Remark~\ref{R:enlarge Z for index}.
\end{remark}

\begin{remark}
To generalize Theorem~\ref{T:Matsuda} to cases where $\calH(y)$ is ramified over $\calH(x)$, it may be most convenient to use Huber's ramification theory for adic curves
\cite{huber}, possibly as refined in \cite{temkin1, temkin2}.
In a similar vein, the global index formula (Theorem~\ref{T:index formula}) is essentially the Riemann-Hurwitz formula for the map $f$, in which case it should be possible to match up the local contributions appearing in Theorem~\ref{T:index formula} with ramification-theoretic local contributions.
\end{remark}

\begin{remark}
Suppose that $p>0$, $X  = \PP^1_K$, 
and $f$ extends to a finite flat morphism of smooth curves
over $\frako_K$ with target $\PP^1_{\frako_K}$. By \cite[Theorem~3.1]{crew}, $\calE$ admits a unit-root Frobenius structure in a neighborhood of $x_1$,
from which it follows that $\calE$ satisfies the Robba condition at $x_1$.

Let $k$ be the residue field of $K$.
If in addition the points of $Z$ have distinct projections to $\PP^1_k$,
then by Remark~\ref{R:parameters}, $\calE$ satisfies the Robba condition everywhere.
By Remark~\ref{R:asympotic irregularity}, this implies that $\calE$ has exponents in $\ZZ_p$; by Remark~\ref{R:tame exponent calculation}, this implies that $f$ is tamely ramified (i.e., the inertia group of each point of $Y$ has order coprime to $p$). By contrast, if the points of $Z$ do not have distinct projections to $\PP^1_k$, then $f$ need not be tamely ramified; see 
\S\ref{sec:automorphisms disc}.
\end{remark}

\begin{remark} \label{R:finite morphism liouville}
For a connection derived from a finite morphism, in case $p>0$ the technical conditions of Remark~\ref{R:star2} are always satisfied. An important consequence of this statement for our present work is that the conclusion of Theorem~\ref{T:virtual local index}(c) can be established for such a connection even when $p>0$; see Remark~\ref{R:convexity positive characteristic}.
\end{remark}

\section{Artin--Hasse exponentials and Witt vectors}
\label{sec:Artin-Hasse}

We will conclude by specializing the previous discussion to cyclic covers of discs in connection with the Oort local lifting problem. 
In preparation for this, we need to recall some standard constructions of $p$-adic analysis.

\begin{hypothesis}
Throughout \S\ref{sec:Artin-Hasse}, fix a prime $p$ and a positive integer $n$.
In some algebraic closure of $\QQ_p$, fix a sequence of primitive $p^n$-th roots of unity $\zeta_{p^n}$ such that $\zeta_{p^n}^p = \zeta_{p^{n-1}}$.
\end{hypothesis}

\begin{defn}
The \emph{Artin-Hasse exponential series} at $p$ is the formal power series
\[
E_p(t) := \exp\left( \sum_{i=0}^\infty \frac{t^{p^i}}{p^i}
\right).
\]
\end{defn}

\begin{lemma} \label{L:Artin-Hasse}
We have $E_p(t) \in \ZZ_{(p)}\llbracket t \rrbracket$.
In particular, $E_p(t)$ converges for $|t| < 1$.
\end{lemma}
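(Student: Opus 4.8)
The plan is to prove that $E_p(t) \in \ZZ_{(p)}\llbracket t \rrbracket$, i.e., that the coefficients of the Artin-Hasse exponential are $p$-integral rational numbers; convergence on the open unit disc is then immediate since $\ZZ_{(p)} \subseteq \ZZ_p$. I would present the classical Dwork-style argument. First, set $F(t) = \sum_{i=0}^\infty t^{p^i}/p^i$, so that $E_p(t) = \exp(F(t))$. The key identity is that $E_p(t)$ can be rewritten as an infinite product
\[
E_p(t) = \prod_{\substack{m \geq 1 \\ p \nmid m}} (1 - t^m)^{-\mu(m)/m},
\]
where $\mu$ is the Möbius function; this follows from taking logarithms and using the standard Möbius-inversion computation $\sum_{i \geq 0} t^{p^i}/p^i = -\sum_{p \nmid m} \frac{\mu(m)}{m} \log(1 - t^m)$, which in turn reduces to the identity $\sum_{d \mid k} \mu(d)/d = \varphi(k)/k$ combined with summing $\sum_{k} t^k$ appropriately. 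Each factor $(1-t^m)^{-\mu(m)/m}$ has $p$-integral coefficients because the binomial series $(1-u)^{-a}$ for $a \in \ZZ_{(p)}$ (here $a = \mu(m)/m$ with $p \nmid m$) has coefficients $\binom{-a}{k}(-1)^k \in \ZZ_{(p)}$, and the product of power series with $\ZZ_{(p)}$ coefficients again has $\ZZ_{(p)}$ coefficients.

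An alternative and perhaps cleaner route, which I would at least mention, is Dwork's lemma: a power series $G(t) \in 1 + t\QQ_p\llbracket t\rrbracket$ lies in $1 + t\ZZ_p\llbracket t\rrbracket$ if and only if $G(t^p)/G(t)^p \in 1 + pt\ZZ_p\llbracket t\rrbracket$. Applying this to $G = E_p$, one computes
\[
\frac{E_p(t^p)}{E_p(t)^p} = \exp\left( F(t^p) - p F(t) \right) = \exp(-pt),
\]
since $F(t^p) - pF(t) = \sum_{i \geq 0} t^{p^{i+1}}/p^i - \sum_{i \geq 0} p t^{p^i}/p^i = \sum_{j \geq 1} p\, t^{p^j}/p^j - p t - \sum_{j \geq 1} p\,t^{p^j}/p^j \cdot$ — more carefully, the $p^{i+1}$ terms telescope against the $i \geq 1$ part of $pF(t)$, leaving exactly $-pt$. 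Then $\exp(-pt) \in 1 + pt\ZZ_p\llbracket t\rrbracket$ because $p^k/k! \in p\ZZ_p$ for $k \geq 1$ (valuation $k - v_p(k!) \geq k - (k-1)/(p-1) \geq 1$), so Dwork's lemma gives $E_p(t) \in \ZZ_p\llbracket t\rrbracket$; refining to $\ZZ_{(p)}$ uses that the coefficients are visibly rational.

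The main obstacle is not conceptual but bookkeeping: in the product approach one must justify convergence/rearrangement of the infinite product in the $t$-adic topology (each new factor $(1-t^m)^{-\mu(m)/m}$ contributes only in degrees $\geq m$, so the partial products stabilize coefficient-wise — this is routine but should be stated), and one must verify the Möbius-inversion identity matching the product's logarithm with $F(t)$. In the Dwork-lemma approach, the obstacle is instead proving Dwork's lemma itself if it is not quoted; assuming it as standard $p$-adic analysis (it is, e.g., in Robba–Christol or Koblitz), the only real work is the elementary valuation estimate $v_p(p^k/k!) \geq 1$ and the telescoping computation of $F(t^p) - pF(t) = -pt$. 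I would use the Dwork-lemma route as the primary argument for brevity, noting that it simultaneously explains the appearance of the Artin-Hasse series in the theory of Witt vectors and Lubin-Tate formal groups, and remark that the product formula gives the sharper statement with $\ZZ_{(p)}$ in place of $\ZZ_p$ directly.
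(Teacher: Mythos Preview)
Your proposal is correct. Both arguments you sketch are standard and valid; the telescoping computation $F(t^p)-pF(t)=-pt$ is right, and the M\"obius product formula is justified by the identity $\sum_{m\mid n,\,p\nmid m}\mu(m)=[n\text{ is a power of }p]$ (your aside about $\varphi(k)/k$ is not quite the relevant identity, but this does not affect the argument).

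As for comparison with the paper: the paper does not give a proof at all---it simply cites \cite[Proposition~9.9.2]{kedlaya-book}. That reference proves the lemma via precisely the Dwork-lemma route you describe (Dwork's criterion appears as the preceding result in that book), so your primary argument is exactly an unpacking of the citation. The M\"obius-product argument is a genuine alternative that has the mild advantage of landing directly in $\ZZ_{(p)}$ without a separate rationality observation, at the cost of the product-rearrangement bookkeeping you note.
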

\begin{proof}
See for instance \cite[Proposition~9.9.2]{kedlaya-book}. 
\end{proof}

\begin{lemma} \label{L:value at 1}
Let $\ZZ_{(p)} \langle t \rangle$ be the subring of $\ZZ_{(p)} \llbracket t \rrbracket$ consisting of series $\sum_{i=0}^\infty c_i t^i$ for which the $c_i$ converge $p$-adically to $0$ as $i \to \infty$.
Then if we define the power series
\[
f(z,t) := \frac{E_p(z t) E_p(t^p)}{E_p(t) E_p(z t^p)} \in
 \ZZ_{(p)}\llbracket t,1-z \rrbracket
\]
using Lemma~\ref{L:Artin-Hasse},
we have
\[
f(z,t) \in z + (t-1) \ZZ_{(p)}\langle t \rangle \llbracket 1-z \rrbracket.
\]
\end{lemma}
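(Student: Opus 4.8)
The plan is to verify the claimed membership by analyzing $f(z,t)$ through the logarithm of the Artin--Hasse series, where the exponential structure collapses to a tractable linear expression. Write $L_p(t) = \sum_{i \geq 0} t^{p^i}/p^i$, so that $E_p(t) = \exp(L_p(t))$ as formal power series. Then
\[
\log f(z,t) = L_p(zt) - L_p(t) - L_p(zt^p) + L_p(t^p)
= \sum_{i \geq 0} \frac{(zt)^{p^i} - t^{p^i} - (zt^p)^{p^i} + (t^p)^{p^i}}{p^i}
= \sum_{i \geq 0} \frac{t^{p^i}(z^{p^i}-1) - t^{p^{i+1}}(z^{p^i}-1)}{p^i},
\]
using $(t^p)^{p^i} = t^{p^{i+1}}$ and $(zt^p)^{p^i} = z^{p^i} t^{p^{i+1}}$. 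Regrouping the telescoping-in-exponent sum, each term is $(z^{p^i}-1)(t^{p^i}-t^{p^{i+1}})/p^i$. The key point is that $z^{p^i}-1$ is divisible by $z-1$ in $\ZZ_{(p)}[z]$, hence $\log f(z,t) \in (z-1)(t-1)\,\ZZ_{(p)}\llbracket t, 1-z\rrbracket$ once one checks the $p$-adic denominators are absorbed.

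First I would make precise why the $1/p^i$ denominators cause no harm. The standard Artin--Hasse estimate (Lemma~\ref{L:Artin-Hasse}, or rather its proof via the Dwork lemma / Dieudonné criterion) shows $L_p(t) - L_p(t^p)/p$ has coefficients in $\ZZ_{(p)}$; more usefully here, one rewrites $\log f$ by re-summing: the coefficient of $t^{p^i}$ in $\sum_j t^{p^j}(z^{p^j}-1)/p^j$ minus the shifted contribution telescopes so that the net coefficient of each power $t^m$ lies in $\ZZ_{(p)}[z]$. Concretely, the coefficient of $t^{p^i}$ in $\log f$ is $(z^{p^i}-1)/p^i - (z^{p^{i-1}}-1)/p^{i-1}$ for $i \geq 1$ (and $z-1$ for $i=0$), and by Fermat/Euler-type congruences $z^{p^i} \equiv z^{p^{i-1}} \pmod{p^i \ZZ_{(p)}[z]}$ — this is exactly the classical fact underlying integrality of $E_p$ — so the difference lies in $\ZZ_{(p)}[z]$; moreover it is divisible by $z-1$. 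Thus $\log f(z,t) \in (z-1)\,\ZZ_{(p)}\llbracket t, 1-z\rrbracket$, and since every monomial carries a strictly positive power of $t$, it lies in $(z-1)\cdot t\,\ZZ_{(p)}\llbracket t, 1-z\rrbracket$; a small additional check gives the stronger factor $(t-1)$ by noting that at $t=1$ the sum collapses (each $t^{p^i}-t^{p^{i+1}}$ vanishes).

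Next I would exponentiate. Since $\log f(z,t)$ lies in the maximal ideal of $\ZZ_{(p)}\llbracket t, 1-z\rrbracket$ (it has no constant term), $f(z,t) = \exp(\log f(z,t)) = 1 + \log f + \tfrac12(\log f)^2 + \cdots$ makes sense in $\QQ\otimes\ZZ_{(p)}\llbracket t, 1-z\rrbracket$, but one must argue it actually lies in $\ZZ_{(p)}\llbracket t, 1-z\rrbracket$ — this is immediate from Lemma~\ref{L:Artin-Hasse} applied to each factor $E_p(zt), E_p(t^p), E_p(t), E_p(zt^p)$, which all lie in $\ZZ_{(p)}\llbracket t,1-z\rrbracket$, together with the fact that $E_p(t)$ and $E_p(zt^p)$ are units there (constant term $1$). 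Then $f - 1 = f\cdot(1 - e^{-\log f})$, and $1 - e^{-\log f} \in \log f \cdot \ZZ_{(p)}\llbracket t,1-z\rrbracket \subseteq (z-1)(t-1)\ZZ_{(p)}\llbracket t,1-z\rrbracket$ by the previous paragraph; hence $f(z,t) - z = (f(z,t)-1) - (z-1) = (z-1)\big(\text{something in } t\,\ZZ_{(p)}\llbracket t,1-z\rrbracket\big) - 0$... here one needs to be a touch careful: I would instead directly compute $f(z,t) - z$ by writing $f = 1 + g$ with $g \in (z-1)\ZZ_{(p)}\llbracket t,1-z\rrbracket$ from the exponential bound, and separately extract that the coefficient of $(1-z)^0$ in $g$ (i.e. the value at $z=1$) equals $z-1$ evaluated... more cleanly: set $z=1$ to see $f(1,t)=1$, so $g(1,t)=0$, giving $g \in (1-z)\ZZ_{(p)}\llbracket t,1-z\rrbracket$; the claim then says the linear-in-$(1-z)$ part of $g$ is $-(1-z)$ plus something in $(t-1)\ZZ_{(p)}\langle t\rangle$, and the higher-order-in-$(1-z)$ part lies in $(1-z)^2 \ZZ_{(p)}\llbracket t\rrbracket \subseteq (t-1)\cdot(\text{junk})$ is \emph{not} automatic — so the real content is the Tate-algebra condition $\ZZ_{(p)}\langle t\rangle$, i.e. that coefficients go to $0$ $p$-adically.

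The main obstacle, then, is not the formal-exponential bookkeeping but establishing the \emph{convergence} refinement: that after subtracting the ``expected'' linear term $z$, every coefficient series in $t$ that appears has coefficients tending $p$-adically to $0$, i.e. lands in $\ZZ_{(p)}\langle t\rangle$ rather than merely $\ZZ_{(p)}\llbracket t\rrbracket$. I expect this to follow from the stronger form of the Artin--Hasse estimate: $E_p(t) \in \ZZ_p\langle t\rangle$ is false in general, but the combination appearing in $f$ — essentially $E_p(zt)/E_p(zt^p)$ divided by $E_p(t)/E_p(t^p)$ — is built so that the ``non-convergent'' parts of the two ratios cancel, leaving a genuine element of the Tate algebra, by the same mechanism that makes the Artin--Hasse--Witt construction of characters of $\ZZ/p^n$ produce overconvergent (or at least convergent-on-the-closed-disc) series. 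I would locate this in the Dwork/Robba tradition (cf. the Artin--Hasse exponential's role in the Dwork exponential $\theta(t) = E_p(\pi t)$ with $\pi^{p-1}=-p$, which converges for $|t|\le 1$); concretely, write $f(z,t) = E_p\!\big((z-1)t\big)\cdot E_p\!\big((z-1)(t^p - t) \cdot h(t)\big)$-type factorization after absorbing cross terms — the precise regrouping that exhibits $(z-1)$ and the Tate-algebra convergence simultaneously — and then read off both assertions. That factorization step, making the cancellation of denominators manifest rather than inferred from congruences, is where I would spend the real effort.
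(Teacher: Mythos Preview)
Your opening move---writing $f=\exp g$ with $g(z,t)=\sum_{i\ge 0}(z^{p^i}-1)(t^{p^i}-t^{p^{i+1}})/p^i$---matches the paper exactly. But two things go wrong after that.

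First, the integrality claim for $\log f$ is false. You assert that $z^{p^i}\equiv z^{p^{i-1}}\pmod{p^i\ZZ_{(p)}[z]}$, but this congruence holds for $z$ an integer, not for $z$ a formal variable: already $z^p-z$ is not divisible by $p$ in $\ZZ_{(p)}[z]$, so the coefficient of $t^{p^i}$ in $g$ is \emph{not} in $\ZZ_{(p)}[z]$. The paper is explicit that $g$ only lives in $(\ZZ_{(p)}\langle t\rangle\otimes_{\ZZ}\QQ)\llbracket 1-z\rrbracket$, with genuine denominators; integrality of $f$ comes only at the end, from Lemma~\ref{L:Artin-Hasse} applied to each Artin--Hasse factor separately. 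Your subsequent manipulations ($f-1=f(1-e^{-\log f})$, etc.) inherit this error and in any case dissolve into hedging (``here one needs to be a touch careful''; ``is not automatic'').

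Second, and more seriously, you never prove the Tate-algebra condition---you correctly identify it as ``the real content'' and then announce you would ``spend the real effort'' on an unspecified factorization. The paper's mechanism is concrete and quite different from anything you sketch: expand $z^{p^i}-1=\sum_{j\ge 1}(-1)^j\binom{p^i}{j}(1-z)^j$ and regroup $g$ by powers of $(1-z)$, obtaining for each $j$ a series in $t$ whose coefficients involve $\binom{p^i-1}{j-1}$. The single observation that $\binom{p^i-1}{j-1}\to\binom{-1}{j-1}=(-1)^{j-1}$ $p$-adically as $i\to\infty$ then gives \emph{both} the $\ZZ_{(p)}\langle t\rangle$-convergence (since the differences $\binom{p^i-1}{j-1}-\binom{p^{i-1}-1}{j-1}$ tend to zero) \emph{and} the identification $g\equiv\log z\pmod{(t-1)}$ (the limiting series is $-\sum_j(1-z)^j/j$). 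Exponentiating gives $f\equiv z$; intersecting with the integrality from Lemma~\ref{L:Artin-Hasse} yields the claim. This binomial-limit step is the entire point, and it is absent from your proposal.
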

\begin{proof}
Write $f(z,t) = \exp g(z,t)$ with
\begin{align*}
g(z,t) &= \sum_{i=0}^\infty \frac{(z^{p^i}-1)(t^{p^i} - t^{p^{i+1}})}{p^i}\\
&= \sum_{i=0}^\infty \sum_{j=1}^\infty (-1)^{j} \binom{p^i}{j} (1-z)^{j} \frac{t^{p^i} - t^{p^{i+1}}}{p^i} \\
&= \sum_{j=1}^\infty \frac{(-1)^{j}}{j}  (1-z)^{j} \sum_{i=0}^\infty \binom{p^i-1}{j-1} (t^{p^i} - t^{p^{i+1}}) \\
&= \sum_{j=1}^\infty \frac{(-1)^{j}}{j}  (1-z)^{j} \left( \binom{0}{j-1} t + \sum_{i=1}^\infty \left( \binom{p^i-1}{j-1}  - \binom{p^{i-1}-1}{j-1} \right) t^{p^i}
\right).
\end{align*}
For each fixed $j$, $\binom{p^i-1}{j-1}$ converges $p$-adically to $\binom{-1}{j-1} = (-1)^{j-1}$ as $i \to \infty$. It follows that
$g(z,t) \in (\ZZ_{(p)} \langle t \rangle \otimes_{\ZZ} \QQ) \llbracket 1-z \rrbracket$
and
\[
g(z,t) \equiv -\sum_{j=1}^\infty \frac{(1-z)^j}{j} \pmod{(t-1)(\ZZ_{(p)} \langle t \rangle \otimes_{\ZZ} \QQ)  \llbracket 1-z \rrbracket}.
\]
This then implies that $f(z,t) \in (\ZZ_{(p)} \langle t \rangle \otimes_{\ZZ} \QQ) \llbracket 1-z \rrbracket$ and 
\[
f(z,t) \equiv z \pmod{(1-t)(\ZZ_{(p)} \langle t \rangle \otimes_{\ZZ} \QQ) \llbracket 1-z \rrbracket}.
\]
Since $f(z,t)$ also belongs to $\ZZ_{(p)} \llbracket t, 1-z \rrbracket$
by Lemma~\ref{L:Artin-Hasse}, we may deduce the claimed inclusion.
\end{proof}

\begin{defn}
Let $W_n$ denote the $p$-typical Witt vector functor. Given a ring $R$, the set $W_n(R)$ consists of $n$-tuples $\underline{a} = (a_0,\dots,a_{n-1})$, and the arithmetic operations on $W_n(R)$ are characterized by functoriality in $R$ and the property that the \emph{ghost map} $w: W_n(R) \to R^n$
given by
\begin{equation} \label{eq:ghost map}
(a_0,\dots,a_{n-1}) \mapsto
(w_0,\dots,w_{n-1}), \qquad w_i = \sum_{j=0}^i p^j a_j^{p^{i-j}}
\end{equation}
is a ring homomorphism for the product ring structure on $R^n$.
For any ideal $I$ of $R$, let $W_n(I)$ denote the subset of $W_n(R)$
consisting of $n$-tuples with components in $I$; since $W_n(I) = \ker(W_n(R) \to W_n(R/I))$, it is an ideal of $W_n(R)$.

Various standard properties of Witt vectors may be derived by using functoriality to reduce to polynomial identities over $\ZZ$, then checking these over $\QQ$ using the fact that the ghost map is a bijection if $p^{-1} \in R$. Here are two key examples.
\begin{enumerate}
\item[(i)]
Define the \emph{Teichm\"uller map} sending $r \in R$ to $[r] := (r,0,\dots,0) \in W_n(R)$. Then this map is multiplicative: for all $r,s \in R$, $[rs] = [r] [s]$.
\item[(ii)]
Define the \emph{Verschiebung map} sending $\underline{a} \in W_n(R)$ to 
$V(\underline{a}) := (0, a_0,\dots,a_{n-2}) \in W_n(R)$. Then this map is additive: for all $\underline{a}, \underline{b} \in W_n(R)$, $V_n(\underline{a}+\underline{b}) = V_n(\underline{a}) + V_n(\underline{b})$.
\end{enumerate}
\end{defn}

\begin{defn}
In case $R$ is an $\FF_p$-algebra, the Frobenius endomorphism $\varphi: R \to R$ extends by functoriality to $W_n(R)$ and satisfies
\begin{equation} \label{eq:phi V char p}
p \underline{a} = (V \circ \varphi)(\underline{a}) = (\varphi \circ V)(\underline{a}) \qquad (\underline{a} \in W_n(R));
\end{equation}
see for instance \cite[\S 0.1]{illusie}. It follows that for general $R$, if we define the map $\sigma$ sending
$\underline{a} \in W_n(R)$ to $\sigma(\underline{a}) = (a_0^p, \dots, a_{n-1}^p) \in W_n(R)$,
then
\begin{equation} \label{eq:p-mult}
p \underline{a} - (V \circ \sigma)(\underline{a})  \in W_n(pR)
\qquad (\underline{a} \in W_n(R)).
\end{equation}
Beware that $\sigma$ is in general not a ring homomorphism.
\end{defn}

\begin{defn} \label{D:Artin-Hasse-Witt}
By Lemma~\ref{L:Artin-Hasse}, we have
\[
E_{n,p}(t) := \frac{E_p(\zeta_{p^n} t)}{E_p(t)} = \exp\left( \sum_{i=0}^{n-1} (\zeta_{p^{n-i}} - 1) \frac{ t^{p^i}}{p^i}
\right) \in \ZZ_{(p)}[\zeta_{p^n}]\llbracket t \rrbracket.
\]
We may also define
\[
E_{n,p}(\underline{a}) := \prod_{i=0}^{n-1} E_{n-i,p}(a_i) \in \ZZ_{(p)}[\zeta_{p^n}]\llbracket a_0,\dots,a_{n-1} \rrbracket,
\]
which we may also write as 
\begin{equation} \label{eq:ghost components}
E_{n,p}(\underline{a}) = \exp \left( \sum_{i=0}^{n-1} (\zeta_{p^{n-i}} - 1)\frac{w_i}{p^i} \right)
\end{equation}
for $w_i$ as in \eqref{eq:ghost map}. Consequently, 
in $\ZZ_{(p)}[\zeta_{p^n}]\llbracket a_0,\dots,a_{n-1},b_0,\dots,b_{n-1} \rrbracket$,
\begin{equation} \label{eq:Witt vector sum}
E_{n,p}(\underline{a}) E_{n,p}(\underline{b}) = E_{n,p}(\underline{a} + \underline{b}).
\end{equation}
\end{defn}

\begin{defn} \label{D:Artin-Hasse-Witt2}
By Lemma~\ref{L:Artin-Hasse}, we may define the formal power series
\[
F_{n,p}(t) := 
\frac{E_{n,p}(t)}{E_{n,p}(t^p)} = \exp \left( \sum_{i=0}^{n-1} \frac{(\zeta_{p^{n-i}}-1)(t^{p^{i}} - t^{p^{i+1}})}{p^i} \right) 
\in \ZZ_{(p)}[\zeta_{p^n}]\llbracket t \rrbracket
\]
and
\[
F_{n,p}(\underline{a}) := \frac{E_{n,p}(\underline{a})}{E_{n,p}(\sigma(\underline{a}))} = \prod_{i=0}^{n-1} F_{n-i,p}(a_i)  \in \ZZ_{(p)}[\zeta_{p^n}]\llbracket a_0,\dots,a_{n-1} \rrbracket.
\]
By \eqref{eq:ghost components}, 
in $\ZZ_{(p)}[\zeta_{p^n}]\llbracket a_0,\dots,a_{n-1},b_0,\dots,b_{n-1} \rrbracket$ we have
\begin{equation} \label{eq:Witt vector sum2}
F_{n,p}(\underline{a}) F_{n,p}(\underline{b}) = F_{n,p}(\underline{a} + \underline{b}) E_{n,p}(\sigma(\underline{a} + \underline{b}) - \sigma(\underline{a}) - \sigma(\underline{b}) ).
\end{equation}
We will see shortly (Lemma~\ref{L:overconvergent}) that
$F_{n,p}(t)$ has radius of convergence greater than 1, which implies an analogous assertion for $F_{n,p}(\underline{a})$. For $p>2$, 
this is shown in \cite[Proposition~1.10]{matsuda} using a detailed computational argument; our argument follows the more conceptual approach
given in \cite[Theorem~2.5]{pulita-rank1}.
\end{defn}

\begin{defn}
By Lemma~\ref{L:Artin-Hasse}, we may define the formal power series
\[
G_{n,p}(\underline{a}) = \frac{E_{n,p}(p\underline{a})}{E_{n-1,p}(\underline{a})} \in \ZZ_{(p)}[\zeta_{p^n}]\llbracket a_0,\dots,a_{n-1} \rrbracket.
\]
By \eqref{eq:Witt vector sum},
\begin{equation} \label{eq:Witt vector sum3}
G_{n,p}(\underline{a}) G_{n,p}(\underline{b}) = G_{n,p}(\underline{a} + \underline{b}).
\end{equation}
We also have
\begin{equation} \label{eq:efg}
G_{n,p}(\underline{a}) = E_{n,p}(p\underline{a} - (0,a_0^p,\dots,a_{n-2}^p)) F_{n-1,p}(\underline{a})^{-1}.
\end{equation}
\end{defn}

\begin{lemma} \label{L:overconvergent}
\begin{enumerate}
\item[(a)]
We have
\[
G_{n,p}(\underline{a}) \in \ZZ_{(p)}[\zeta_{p^n}]\llbracket (\zeta_{p^n}-1) a_0,
\dots, (\zeta_p-1)a_{n-1} \rrbracket.
\]
\item[(b)]
The power series $F_{n,p}(\underline{a})$ converges on a polydisc
with radius of convergence strictly greater than $1$.
\end{enumerate}
\end{lemma}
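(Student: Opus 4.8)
The plan is to deduce both parts from the single-variable assertion that
\[
F_{m,p}(t)\in\ZZ_{(p)}[\zeta_{p^{m+1}}]\llbracket(\zeta_{p^{m+1}}-1)t\rrbracket\qquad(m\ge 0),
\]
which I would prove by induction on $m$, the case $m=0$ being trivial since $F_{0,p}=1$. First the reduction. By the additivity \eqref{eq:Witt vector sum3} and the decomposition $\underline a=\sum_{i=0}^{n-1}V^i([a_i])$, one has $G_{n,p}(\underline a)=\prod_{i}G_{n,p}(V^i([a_i]))$. A computation with ghost components gives $E_{m,p}(V\underline c)=E_{m-1,p}(\underline c)$ and $pV\underline c=V(p\underline c)$, hence $G_{n,p}(V\underline c)=G_{n-1,p}(\underline c)$, hence $G_{n,p}(V^i([a]))=G_{n-i,p}([a])$. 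Specializing \eqref{eq:efg} at $\underline a=[a]$, and noting that $p[a]-V\sigma([a])$ is the Witt vector with ghost components $(pa,0,\dots,0)$, one gets $G_{m,p}([a])=\exp\big((\zeta_{p^m}-1)pa\big)\cdot F_{m-1,p}(a)^{-1}$. Since $\exp((\zeta_{p^m}-1)pa)=\sum_k(p^k/k!)\big((\zeta_{p^m}-1)a\big)^k$ has all coefficients $p$-integral, part (a) reduces to the displayed assertion for $0\le m\le n-1$ (using $\ZZ_{(p)}[\zeta_{p^{m+1}}]\subseteq\ZZ_{(p)}[\zeta_{p^n}]$). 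Part (b) follows from the same assertion with $m=n$: it forces the coefficient of $t^k$ in $F_{n,p}$ to have valuation $\ge k/(p^n(p-1))$, so $F_{n,p}(t)$ converges for $|t|<p^{1/(p^n(p-1))}$, whence $F_{n,p}(\underline a)=\prod_i F_{n-i,p}(a_i)$ converges on a polydisc of polyradius $>1$.

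For the inductive step, recall from Lemma~\ref{L:value at 1} that $F_{m,p}(t)=f(\zeta_{p^m},t)$. Telescoping $p\sum_i x^{p^i}/p^i-\sum_i x^{p^{i+1}}/p^i=px$ yields the Artin--Hasse identity $E_p(x)^p=\exp(px)E_p(x^p)$, and a short manipulation of the definition of $f$ then gives $f(z,t)^p=\exp(p(z-1)(t-t^p))f(z^p,t^p)$; at $z=\zeta_{p^m}$ this reads
\[
F_{m,p}(t)^p=\exp\big(p(\zeta_{p^m}-1)(t-t^p)\big)\cdot F_{m-1,p}(t^p).
\]
Put $s=(\zeta_{p^{m+1}}-1)t$. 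From $p\cdot v(\zeta_{p^{m+1}}-1)=v(\zeta_{p^m}-1)$ one gets $(\zeta_{p^m}-1)t^p=\mu s^p$ with $\mu$ a unit and $(\zeta_{p^m}-1)t=\lambda s$ with $v(\lambda)=p^{-m}$; hence the inductive hypothesis gives $F_{m-1,p}(t^p)\in\ZZ_{(p)}[\zeta_{p^{m+1}}]\llbracket s^p\rrbracket$, while $\exp(p(\zeta_{p^m}-1)(t-t^p))=\exp(p\lambda s)\exp(-p\mu s^p)\in\ZZ_{(p)}[\zeta_{p^{m+1}}]\llbracket s\rrbracket$ because $v_p(p\lambda),v_p(p\mu)\ge 1/(p-1)$. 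Therefore $F_{m,p}(t)^p\in 1+s\,\ZZ_{(p)}[\zeta_{p^{m+1}}]\llbracket s\rrbracket$.

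The hard part is the final step: deducing that the distinguished $p$-th root $F_{m,p}(t)=\exp(g(\zeta_{p^m},t))$ again lies in $\ZZ_{(p)}[\zeta_{p^{m+1}}]\llbracket s\rrbracket$. A $p$-th root of an integral power series need not be integral (witness $(1-s)^{1/p}$), so one must use that $F_{m,p}$ is not an arbitrary root. I would argue by a Dwork-lemma-type result over the ramified discrete valuation ring $\ZZ_{(p)}[\zeta_{p^{m+1}}]$, combining the functional equation above with the behaviour of $F_{m,p}(t)^p/F_{m,p}(t^p)$ under the Frobenius substitution $t\mapsto t^p$ and keeping careful track of congruences modulo powers of the uniformizer $\zeta_{p^{m+1}}-1$; this is the point at which the argument follows the conceptual approach of \cite[Theorem~2.5]{pulita-rank1}. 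The remaining items (the ghost-component identities, and the passage from single-variable $F_{m,p}$ back to $F_{n,p}(\underline a)$ and $G_{n,p}(\underline a)$) are routine.
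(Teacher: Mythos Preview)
Your reduction of both parts to the single-variable claim
\[
F_{m,p}(t)\in\ZZ_{(p)}[\zeta_{p^{m+1}}]\llbracket(\zeta_{p^{m+1}}-1)t\rrbracket
\]
is sound, and the functional equation $F_{m,p}(t)^p=\exp\big(p(\zeta_{p^m}-1)(t-t^p)\big)\,F_{m-1,p}(t^p)$ is correct. But the proof is not complete: as you yourself flag, the step from integrality of $F_{m,p}(t)^p$ to integrality of $F_{m,p}(t)$ is the whole difficulty, and the sentence about a ``Dwork-lemma-type result'' is not an argument. The standard Dwork lemma concerns $h(t)^p/h(t^p)$, not $h(t)^p$ alone, and here $F_{m,p}(t)^p/F_{m,p}(t^p)$ does not simplify to something whose integrality you already know over the ramified ring $\ZZ_{(p)}[\zeta_{p^{m+1}}]$ without further input. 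So the inductive scheme, as written, has a genuine gap.

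The paper's proof sidesteps this entirely by reversing the logical order: it proves (a) first, directly and without induction, and then deduces (b) from (a) via the inversion $F_{n,p}(\underline a)=G_{n+1,p}(\underline a)^{-1}\,E_{n+1,p}\big(p\underline a-(V\circ\sigma)(\underline a)\big)$ of \eqref{eq:efg}. The direct proof of (a) is a Witt-vector identity: starting from the ghost-component expression for $G_{n,p}$, one uses $p(\zeta-1)-(\zeta^p-1)=(\zeta-1)\big(p-1-\zeta-\cdots-\zeta^{p-1}\big)$ to rewrite
\[
G_{n,p}(\underline a)=\prod_{j=0}^{n-1}E_{n-j,p}\Big(\big(p-1-[\zeta_{p^{n-j}}]-\cdots-[\zeta_{p^{n-j}}^{p-1}]\big)\,[a_j]\Big).
\]
The Witt vector $p-1-[\zeta]-\cdots-[\zeta^{p-1}]$ reduces to $0$ in $W_{n-j}(\FF_p)$, hence lies in $W_{n-j}\big((\zeta_{p^{n-j}}-1)\ZZ_{(p)}[\zeta_{p^{n-j}}]\big)$; since $E_{n-j,p}$ has integral coefficients, each factor lands in $\ZZ_{(p)}[\zeta_{p^n}]\llbracket(\zeta_{p^{n-j}}-1)a_j\rrbracket$, which is (a). Part (b) then falls out because both $G_{n+1,p}(\underline a)^{-1}$ (by (a)) and $E_{n+1,p}\big(p\underline a-(V\circ\sigma)(\underline a)\big)$ (by \eqref{eq:p-mult} and integrality of $E_{n+1,p}$) overconverge. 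There is no $p$-th root to extract anywhere.
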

\begin{proof}
Write
\begin{align*}
G_{n,p}(\underline{a}) &= \prod_{j=0}^{n-1} \exp \left( \sum_{i=0}^{n-j-1} (p(\zeta_{p^{n-j-i}} - 1) - (\zeta_{p^{n-j-1-i}}-1)) \frac{a_j^{p^i}}{p^i} \right)  \\
&= \prod_{j=0}^{n-1} \exp \left( \sum_{i=0}^{n-j-1} (\zeta_{p^{n-j-i}} - 1) (p - 1 - \zeta_{p^{n-j-i}} -\cdots - \zeta_{p^{n-j-i}}^{p-1}) \frac{a_j^{p^i}}{p^i} \right) \\
&= \prod_{j=0}^{n-1} E_{n-j,p}\left(( p - 1 - [\zeta_{p^{n-j}}]- \cdots - [\zeta_{p^{n-j}}^{p-1}])
[a_j]\right).
\end{align*}
Note that $ p - 1 - [\zeta_{p^{n-j}}]- \cdots - [\zeta_{p^{n-j}}^{p-1}]$ maps to zero in $W_{n-j}(\FF_p)$ and so belongs to $W_{n-j}((\zeta_{p^{n-j}}-1)\ZZ_{(p)}[\zeta_{p^{n-j}}])$. This proves (a).

To prove (b), apply \eqref{eq:efg} to write
\[
F_{n,p}(\underline{a}) = G_{n+1,p}(\underline{a})^{-1} E_{n+1,p}(p \underline{a} - (V \circ \sigma)(\underline{a})).
\]
Since $G_{n+1,p}(0) = 1$, (a) implies that $G_{n+1,p}(\underline{a})^{-1}$ converges on a polydisc with radius of convergence strictly greater than 1.
Since $E_{n+1,p}(\underline{a}) \in \ZZ_{(p)}[\zeta_{p^{n+1}}] \llbracket a_0,\dots,a_{n-1} \rrbracket$
and \eqref{eq:p-mult} implies that $p \underline{a} - (V \circ \sigma)(\underline{a}) \in W_n(pR)$, $E_{n+1,p}(p \underline{a} - (V \circ \sigma)(\underline{a}))$ also converges on a polydisc with radius of convergence strictly greater than 1. This proves (b).
\end{proof}

\begin{lemma} \label{L:identify root}
For $m \in \ZZ$, we have
\begin{equation} \label{eq:root of unity}
F_{n,p}(\underline{a}+\underline{b}+m) F_{n,p}(\underline{a})^{-1} E_{n,p}(
\sigma(\underline{a} + \underline{b}+m) - \sigma(\underline{a}) - \sigma(\underline{b}) - m) = \zeta_{p^n}^m
\end{equation}
as an equality of elements of $\ZZ_p[\zeta_{p^n}]\llbracket a_0,\dots,a_{n-1},b_0,\dots,b_{n-1} \rrbracket$.
\end{lemma}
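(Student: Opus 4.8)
The plan is to use the functional equations \eqref{eq:Witt vector sum}--\eqref{eq:Witt vector sum2} to collapse the left-hand side to a quantity depending only on $m$, and then to evaluate that quantity by induction on $m$, with the base case $m=1$ supplied by Lemma~\ref{L:value at 1}.

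The first step is purely formal, carried out inside $\ZZ_{(p)}[\zeta_{p^n}]\llbracket a_0,\dots,a_{n-1},b_0,\dots,b_{n-1}\rrbracket$. Applying \eqref{eq:Witt vector sum2} twice (to $\underline a,\underline b$ and then to $\underline a+\underline b,m$) and using the additivity \eqref{eq:Witt vector sum} of $E_{n,p}$, one obtains $F_{n,p}(\underline a+\underline b+m)=F_{n,p}(\underline a)F_{n,p}(\underline b)F_{n,p}(m)\,E_{n,p}\big(\sigma(\underline a)+\sigma(\underline b)+\sigma(m)-\sigma(\underline a+\underline b+m)\big)$. Substituting this into the left-hand side (which I read as also carrying a factor $F_{n,p}(\underline b)^{-1}$ alongside $F_{n,p}(\underline a)^{-1}$, since otherwise the $m=0$ case already fails, $F_{n,p}(\underline b)$ being nonconstant) and cancelling the $\sigma(\underline a),\sigma(\underline b),\sigma(\underline a+\underline b+m)$ terms against the displayed $E_{n,p}$-factor, every occurrence of the $a_i,b_i$ drops out and the left-hand side becomes the constant $\Psi(m):=F_{n,p}(m)\,E_{n,p}(\sigma(m)-m)$, where $m$ now denotes the image of the integer $m$ in $W_n$.

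The second step evaluates $\Psi$. First one notes it is a well-defined element of $\ZZ_p[\zeta_{p^n}]$: the Witt components of an integer are integers, so $F_{n,p}(m)$ is a value of the overconvergent series of Lemma~\ref{L:overconvergent}(b) at arguments of absolute value $\le 1$; and since $\sigma$ reduces modulo $p$ to the Witt-vector Frobenius, which is the identity on $W_n(\FF_p)=\ZZ/p^n\ZZ$, we have $\sigma(m)-m\in W_n(p\ZZ)$, so $E_{n,p}(\sigma(m)-m)$ is an evaluation inside the open unit polydisc on which $E_{n,p}$ converges. Now $\Psi(0)=F_{n,p}(0)E_{n,p}(0)=1$, and applying \eqref{eq:Witt vector sum2} to $m$ and $1$, using $\sigma([1])=[1]$ and the additivity of $E_{n,p}$, gives $\Psi(m+1)=\Psi(m)\,F_{n,p}(1)$. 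Here $F_{n,p}(1)$ is $F_{n,p}$ evaluated at the Witt vector $[1]=(1,0,\dots,0)$, which by the product formula defining $F_{n,p}(\underline a)$ is just the one-variable series $F_{n,p}(t)$ at $t=1$; and since $F_{n,p}(t)=E_p(\zeta_{p^n}t)E_p(t^p)/\big(E_p(t)E_p(\zeta_{p^n}t^p)\big)=f(\zeta_{p^n},t)$ in the notation of Lemma~\ref{L:value at 1}, and $|1-\zeta_{p^n}|<1$, specializing that lemma's conclusion $f(z,t)\in z+(t-1)\ZZ_{(p)}\langle t\rangle\llbracket 1-z\rrbracket$ at $z=\zeta_{p^n}$ yields $f(\zeta_{p^n},1)=\zeta_{p^n}$. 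Thus $\Psi(m+1)=\zeta_{p^n}\Psi(m)$, and running this recursion upward from $\Psi(0)=1$ (and downward for $m<0$) gives $\Psi(m)=\zeta_{p^n}^m$, as claimed.

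The main obstacle is the convergence bookkeeping underlying the second step: the identity is ultimately one of convergent functions rather than of formal series, and $E_{n,p}$ converges only on the open unit polydisc while $F_{n,p}$ is overconvergent, so at each stage one must check that the Witt vectors fed into $E_{n,p}$ have components in $p\ZZ_p$ (which is exactly where $\sigma\equiv\mathrm{id}\pmod p$ on $W_n(\FF_p)$ is used) and that the factors of $F_{n,p}$ are evaluated within their radius of convergence. Conceptually, the root of unity $\zeta_{p^n}$ enters the whole computation only through Lemma~\ref{L:value at 1}, i.e., through the fact that the specific combination $f(z,t)$ of Artin--Hasse series extends across the circle $|t|=1$ on which the individual factors diverge; everything else is formal manipulation with the Witt-vector functional equations.
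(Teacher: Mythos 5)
Your proof is correct and follows essentially the same route as the paper: collapse the left-hand side via the Witt-vector functional equations \eqref{eq:Witt vector sum} and \eqref{eq:Witt vector sum2} to the constant $F_{n,p}(m)E_{n,p}(\sigma(m)-m)$, then use multiplicativity in $m$ together with Lemma~\ref{L:value at 1} to identify this constant with $\zeta_{p^n}^m$. The paper phrases the second step as ``both sides of the reduced identity are multiplicative in $m$,'' while you phrase it as a one-step recursion $\Psi(m+1)=F_{n,p}(1)\Psi(m)$; these are the same argument.

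Your observation about the missing $F_{n,p}(\underline{b})^{-1}$ is correct, and it is worth recording that it is a genuine typo in the statement, not merely a reading choice. Setting $m=0$ and $\underline{a}=0$, the printed left-hand side of \eqref{eq:root of unity} reduces via \eqref{eq:Witt vector sum2} to $F_{n,p}(\underline{b})$, which is a nonconstant power series in $b_0,\dots,b_{n-1}$, so the identity as printed fails already at $m=0$. There are two natural corrections that make the formal collapse work: (i) inserting $F_{n,p}(\underline{b})^{-1}$ as you do, which is the symmetric version; or (ii) replacing $\sigma(\underline{b})$ by $\underline{b}$ inside the argument of $E_{n,p}$, which is the version actually invoked in the proof of Theorem~\ref{T:matsuda classification}, where the $E_{n,p}$-factor carries $-\underline{c}$ rather than $-\sigma(\underline{c})$. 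Both corrected statements reduce to the same scalar identity $F_{n,p}(m)E_{n,p}(\sigma(m)-m)=\zeta_{p^n}^m$, and the remainder of your argument (including the convergence bookkeeping for $\sigma(m)-m\in W_n(p\ZZ)$ and the evaluation $F_{n,p}(1)=f(\zeta_{p^n},1)=\zeta_{p^n}$ via Lemma~\ref{L:value at 1}) matches the paper's proof.
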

Note that the presence of $m$ prevents us from heedlessly applying
\eqref{eq:Witt vector sum}, because for instance $E_{n,p}(m)$ does not make sense. (We like to think of this as an example of \emph{conditional convergence} in a nonarchimedean setting.) Similarly, we must work over $\ZZ_p$ rather than $\ZZ_{(p)}$.
\begin{proof}
Convergence of $F_{n,p}(\underline{a} + \underline{b}+m)$ is guaranteed by Lemma~\ref{L:overconvergent}. Convergence of 
$E_{n,p}(
\sigma(\underline{a} + \underline{b}+m) - \sigma(\underline{a}) - \sigma(\underline{b}) - m)$ is guaranteed by Lemma~\ref{L:Artin-Hasse}
and the fact that 
\[
\sigma(\underline{a} + \underline{b}+m) - \sigma(\underline{a}) - \sigma(\underline{b}) - m \in W_n((\zeta_{p^n}-1) \ZZ_p[\zeta_{p^n}]\llbracket a_0,\dots,a_{n-1},b_0,\dots,b_{n-1} \rrbracket).
\]
Thus the left side of \eqref{eq:root of unity} is well-defined.
Using \eqref{eq:Witt vector sum}, we see that this quantity is constant as a power series in $a_0,\dots,a_{n-1},b_0,\dots,b_{n-1}$; it thus remains to prove that
\begin{equation} \label{eq:root of unity2}
F_{n,p}(m) E_{n,p}(\sigma(m) - m) = \zeta_{p^n}^m.
\end{equation}
Using \eqref{eq:Witt vector sum} and \eqref{eq:Witt vector sum2}, we see that for $m,m' \in \ZZ$,
\begin{align*}
&F_{n,p}(m) E_{n,p}(\sigma(m) - m) 
F_{n,p}(m') E_{n,p}(\sigma(m') - m') \\
&\qquad = F_{n,p}(m+m') E_{n,p}(\sigma(m+m') - \sigma(m) - \sigma(m'))
E_{n,p}(\sigma(m) - m) E_{n,p}(\sigma(m') - m')  \\
&\qquad = F_{n,p}(m+m') E_{n,p}(\sigma(m+m') - m-m'),
\end{align*}
so both sides of \eqref{eq:root of unity2} are multiplicative in $m$.
It thus suffices to check \eqref{eq:root of unity2} for $m=1 = (1,0,\dots)$, in which case $\sigma(m) = m$ and so the desired equality becomes $F_{n,p}(1) = \zeta_{p^n}$. This follows from
Lemma~\ref{L:value at 1} by evaluating at $z = \zeta_{p^n}$.
\end{proof}

\section{Kummer-Artin-Schreier-Witt theory}
\label{sec:kasw}

In further preparation for discussion of the Oort local lifting problem,
we describe a form of \emph{Kummer-Artin-Schreier-Witt theory} for 
cyclic Galois extensions of a power series field.

\begin{hypothesis} \label{H:kasw}
Throughout \S\ref{sec:kasw},
fix a positive integer $n$,
assume that $K$ is discretely valued,
the residue field $k$ of $K$ is algebraically closed of characteristic $p>0$,
and $K$ contains a primitive $p^n$-th root of unity $\zeta_{p^n}$.
Put $F = k((\overline{z}))$.
\end{hypothesis}

\begin{defn}
For $\rho \in (0,1)$, let $A(\rho,1)$ be the annulus $\rho < |z| < 1$ in $\PP^1_K$;
the analytic functions on $A(\rho,1)$ can be viewed as certain Laurent series in $z$.
The union of the rings $\calO(A(\rho,1))$ over all $\rho \in (0,1)$ 
is called the \emph{Robba ring} over $K$ and will be denoted $\calR$.
(This ring can be interpreted as the local ring of the adic point of $\PP^{1,\an}_K$ specializing $x_1$ in the direction towards $0$.)

Let $\calR^{\bd}$ be the subring of $\calR$ consisting of formal sums with bounded coefficients; these are exactly the elements of $\calR$ which define \emph{bounded} analytic functions on $A(\rho,1)$ for some $\rho \in (0,1)$.
The ring $\calR^{\bd}$ carries a multiplicative \emph{Gauss norm} defined by
\[
\left| \sum_{i \in \ZZ} a_i z^i \right| = \max_i \{\left|a_i\right|\};
\]
let $\calR^{\inte}$ be the subring of $\calR^{\bd}$ consisting of elements of Gauss norm at most 1.
\end{defn}

\begin{lemma} \label{L:henselian}
The ring $\calR^{\inte}$ is a henselian discrete valuation ring.
Consequently, $\calR^{\bd}$ is a henselian local field with residue field $F$.
\end{lemma}
\begin{proof}
See \cite[Proposition~3.2]{matsuda1}.
\end{proof}

We next prepare to formulate the comparison between Kummer theory and Artin-Schreier-Witt theory by introducing the two sides of the comparison.
\begin{defn} \label{D:Kummer}
For any field $L$ of characteristic not equal to $p$, for $L^{\sep}$ a separable closure of $L$, taking 
Galois cohomology on the exact sequence
\[
1 \to \mu_{p^n} \to (L^{\sep})^{\times} \stackrel{\bullet^{p^n}}{\to} (L^{\sep})^\times \to 1
\]
of $G_L$-modules gives the \emph{Kummer isomorphism}
\[
L^{\times}/L^{\times p^n} \cong H^1(G_L, \mu_{p^n})
\]
because $H^1(G_L, (L^{\sep})^{\times}) = 0$ by Noether's form of Hilbert's Theorem 90.

In the case $L = \calR^{\bd}$, by Lemma~\ref{L:henselian}
we have a surjection $G_L \to G_F$ identifying $G_F$ with the quotient of the maximal unramified extension of $L$. We thus obtain a restriction map $H^1(G_F, \mu_{p^n}) \to H^1(G_L, \mu_{p^n})$
and thus a map $H^1(G_F, \mu_{p^n}) \to L^{\times}/L^{\times p^n}$.
Note that $G_F$ acts trivially on $\mu_{p^n}$, so we may identify
$\mu_{p^n}$ as a $G_F$-module with $\ZZ/p^n \ZZ$ by identifying our chosen primitive $p^n$-th root of unity $\zeta_{p^n} \in \mu_{p^n}$
with $1 \in \ZZ/p^n \ZZ$. We thus end up with a homomorphism
\begin{equation} \label{eq:Kummer map}
H^1(G_F, \ZZ/p^n \ZZ) \to (\calR^{\bd})^\times / (\calR^{\bd})^{\times p^n}.
\end{equation}
(Beware that the opposite sign is used to normalize the isomorphism $\mu_{p^n} \cong \ZZ/p^n \ZZ$ in \cite{matsuda}, \cite{pulita-rank1}, leading to some minor differences in the formulas.)
\end{defn}

\begin{defn} \label{D:Artin-Schreier-Witt}
Consider the exact sequence
\[
0 \to \ZZ/p^n \ZZ = W_n(\FF_p) \to W_n(F^{\sep}) \stackrel{1-\varphi}{\to} W_n(F^{\sep}) \to 0
\]
where $\varphi$ denotes the Frobenius endomorphism of $W_n(F^{\sep})$.
The additive group $W_n(F^{\sep})$ is a successive extension of copies of the additive group of 
$F^{\sep}$; since $H^1(G_{F}, F^{\sep}) = 0$ by the additive version of Theorem 90, 
we also have $H^1(G_{F}, W_n(F^{\sep})) = 0$. We thus obtain the \emph{Artin-Schreier-Witt isomorphism}
\begin{equation} \label{eq:asw}
\coker(1-\varphi, W_n(F)) \cong H^1(G_{F}, \ZZ/p^n \ZZ).
\end{equation}
Combining this isomorphism with the map \eqref{eq:Kummer map} derived from the Kummer isomorphism, we obtain a homomorphism
\begin{equation} \label{eq:kasw}
\coker(1-\varphi, W_n(F)) \to (\calR^{\bd})^\times / (\calR^{\bd})^{\times p^n}.
\end{equation}
\end{defn}

We note in passing how Swan conductors appear in the Artin-Schreier-Witt isomorphism.
See also \cite[Theorem~3.2]{kato-swan}, \cite[Theorem~1.1]{garuti2},
\cite[Proposition~4.2]{thomas}; see especially
\cite[Theorem 4.9]{chiarellotto-pulita} for the full computation of $\coker(1-\varphi, W_n(F))$.
\begin{lemma} \label{L:identify conductor}
For $\overline{\underline{a}} \in W_n(F)$, 
let $\pi: G_{F} \to K^\times$ be the character corresponding via \eqref{eq:asw}
to the class of $\overline{\underline{a}}$  in $\coker(\varphi-1, W_n(F))$.
For $j=0,\dots,n-1$, let $m_j$ be the negation of the $\overline{z}$-adic valuation of $\overline{a}_j$, and assume that $m_j$ is not a positive multiple of $p$.
Then the Swan conductor of $\pi$ equals 
\[
\max\{0, m_0 p^{n-1},m_1 p^{n-2}, \dots, m_{n-1}\}.
\]
\end{lemma}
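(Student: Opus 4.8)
The plan is to prove the formula in two stages: first replace $\overline{\underline a}$ by a convenient representative of its class in $\coker(\varphi - 1, W_n(F))$, and then compute the Swan conductor $\operatorname{Sw}(\pi)$ of a representative in ``reduced standard form'' by induction on $n$ (or by quoting the known formula for that case).

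\textbf{Normalization.} Modifying $\overline{\underline a}$ by an element of $(\varphi-1)(W_n(F))$ changes neither its class in $\coker(\varphi-1, W_n(F))$ nor the character $\pi$. Working upward from $j = 0$, I would arrange that each $\overline a_j$ is either $0$ or a polynomial in $\overline z^{-1}$ with vanishing constant term: at stage $j$, subtract $(\varphi - 1)(V^j[s])$, where $[s]$ is the Teichm\"uller lift of an element $s \in k[[\overline z]]$ chosen with $s^p - s$ equal to the holomorphic part of the current $j$-th component. Such an $s$ exists because $T \mapsto T^p - T$ is bijective on $\overline z\,k[[\overline z]]$ and, since $k$ is algebraically closed, surjective on $k$. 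Because $\varphi$ commutes with $V$ on the $\FF_p$-algebra $F$, we have $(\varphi-1)(V^j[s]) = V^j([s^p] - [s])$, which vanishes in every component of index $< j$, equals $s^p - s$ in component $j$, and is a polynomial in $s$ --- hence holomorphic --- in each component of index $> j$. So this move clears the holomorphic part of component $j$, leaves components $0, \dots, j-1$ untouched, and perturbs components $j+1, \dots, n-1$ only by holomorphic functions; in particular it never alters any pole part. After performing it for $j = 0, 1, \dots, n-1$ in turn, the $j$-th component is either $0$ or a pure polynomial in $\overline z^{-1}$ of pole order exactly the original $m_j$. Since each such $m_j$ is positive and prime to $p$ by hypothesis, no pole can be lowered further, so the representative is in reduced standard form.

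\textbf{The reduced case.} It remains to show that if $\overline{\underline a}$ is in reduced standard form then $\operatorname{Sw}(\pi) = \max\{0, m_0 p^{n-1}, m_1 p^{n-2}, \dots, m_{n-1}\}$. For $n = 1$ this is the classical Artin-Schreier computation: if $\overline a_0$ has a pole of order $m_0$ prime to $p$ then $F[y]/(y^p - y - \overline a_0)$ is totally ramified of degree $p$ with unique ramification break $m_0$, while if $\overline a_0$ is holomorphic the extension is trivial. For the inductive step, suppose first $\overline a_0 = 0$; then $\overline{\underline a}$ lies in the image of the Verschiebung $V \colon W_{n-1}(F) \to W_n(F)$, which is injective on cohomology, so $\pi$ cuts out the same extension as the $W_{n-1}$-character of $(\overline a_1, \dots, \overline a_{n-1})$ and the formula follows from the case $n-1$. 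If instead $\overline a_0 \neq 0$, then $\pi$ has order exactly $p^n$; let $\pi'$ be the character of the truncation $\overline{\underline a}' = (\overline a_0, \dots, \overline a_{n-2}) \in W_{n-1}(F)$. Since truncation $W_n \to W_{n-1}$ induces the reduction $\ZZ/p^n\ZZ \to \ZZ/p^{n-1}\ZZ$, the field cut out by $\pi'$ is the degree-$p^{n-1}$ subextension of that cut out by $\pi$, so $\operatorname{Sw}(\pi')$ equals the second-largest ramification break of $\pi$. Applying Herbrand's theorem to the degree-$p$ extension on top, together with Hasse-Arf to keep breaks integral, yields in this reduced situation the recursion $\operatorname{Sw}(\pi) = \max\{p\operatorname{Sw}(\pi'),\, m_{n-1},\, 0\}$; combining with the inductive hypothesis $\operatorname{Sw}(\pi') = \max\{0, m_0 p^{n-2}, \dots, m_{n-2}\}$ gives the claim. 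Alternatively, the reduced-case formula may be quoted directly from \cite{kato-swan}, \cite{garuti2}, \cite{thomas}, or \cite{chiarellotto-pulita}, or deduced from the theory of rank-$1$ differential modules over the Robba ring $\calR$ via the comparison \eqref{eq:kasw}.

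\textbf{Main obstacle.} The delicate step is the inductive step in the reduced case: proving that climbing one level in an Artin-Schreier-Witt tower multiplies the existing ramification breaks by exactly $p$ and that the top Witt component contributes its pole order without ``defect'', i.e.\ without interference among the various contributions. Making this precise calls for careful bookkeeping with Herbrand's function and with the interaction between the Verschiebung and the ramification filtration --- which is exactly what the references above provide --- and the hypothesis that each $m_j$ is not a positive multiple of $p$ is precisely what rules out the degenerate configurations in which the clean maximum fails. A lesser, purely computational obstacle arises in the normalization step, namely checking that the universal Witt addition and negation polynomials introduce only holomorphic perturbations into the higher components; this is a finite calculation, slightly more delicate for $p = 2$, where $-[s]$ is no longer $[-s]$ but its coordinates are still polynomials in $s$ with no constant term.
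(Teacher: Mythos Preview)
Your proposal is sound and would yield a correct proof, but it takes a different route from the paper. The paper does not normalize and induct on $n$; instead it decomposes $\overline{\underline a}$ additively as $\sum_j V^j[\overline a_j]$, observes that under the hypothesis the nonzero quantities $m_j p^{n-1-j}$ have pairwise distinct $p$-adic valuations (hence are pairwise distinct), so the Swan conductor of the sum of characters is the maximum of the summands' conductors, and thereby reduces to the case of a single Teichm\"uller element $[\overline a]$. It then reduces further by tame base change to $\overline a = \overline z^{-1}$ and computes the lower numbering breaks of the resulting Artin--Schreier--Witt tower explicitly, converting to upper numbering by Herbrand. What this buys is that the one hard computation is completely explicit and self-contained; by contrast, your inductive approach is cleaner conceptually but leans on the recursion $\operatorname{Sw}(\pi) = \max\{p\operatorname{Sw}(\pi'),\, m_{n-1},\, 0\}$, which you correctly flag as the main obstacle. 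Your sketch ``Herbrand plus Hasse--Arf'' is not quite a proof of that recursion on its own: one must identify the Artin--Schreier generator of $F_n/F_{n-1}$ and control its pole order over $F_{n-1}$, which involves all of $\overline a_0,\dots,\overline a_{n-1}$ and is exactly the content of the references you cite. So your argument is correct provided you either invoke those references for the recursion or carry out that local analysis; the paper sidesteps this by reducing to a single Teichm\"uller case where the tower can be worked out by hand.
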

\begin{proof}
By hypothesis, if $m_j$ is positive then it is not divisible by $p$,
so $m_j p^{n-1-i}$ has $p$-adic valuation $n-i-1$.
Consequently, any two of the quantities $m_0 p^{n-1},m_1 p^{n-2}, \dots, m_{n-1}$, if they are nonzero, must be distinct. 
It thus suffices to check the claim in case $\overline{\underline{a}}$
is a Teichm\"uller element $[\overline{a}]$ for some $\overline{a} \in F$
of $\overline{z}$-adic valuation $-m$ for some integer $m$ which is positive and not divisible by $p$. By splitting $\overline{a}$ into powers of $\overline{z}$,
we may further reduce to the case $\overline{a} = c \overline{z}^{-m}$.
Using the compatibility of Swan conductors with tame base extensions, we may further reduce to the case $\overline{a} = \overline{z}^{-1}$.

For $j=1,\dots,n$, let $F_{j}$ be the extension of $F$ obtained by adjoining 
the coordinates $\overline{b}_0, \dots,\overline{b}_{j-1}$ of a Witt vector $\overline{\underline{b}}$ satisfying
\[
\overline{\underline{b}} - \varphi(\overline{\underline{b}}) = [\overline{a}].
\]
It is clear that $\overline{b}_0 - \overline{b}_0^p = \overline{a}$.
By direct computation, one sees that for $j=1,\dots,n-1$, $\overline{b}_j - \overline{b}_j^p$ has the same $\overline{z}$-adic valuation as $\overline{b}_{j-1} \overline{a}^{p^j-p^{j-1}}$ and that this valuation is $-(p^j - p^{j-1} + \cdots + p^{-j})$. It follows that the breaks in the lower numbering filtration of $\Gal(F_n/F)$ occur at 
$(p^{2j+1}+1)/(p+1)$ for $j=0,\dots,n-1$;  by Herbrand's formula
\cite[Chapter~IV]{serre}, the breaks in the upper numbering filtration occur at $1, p, \dots, p^{n-1}$. The last of these breaks is the Swan conductor of $\pi$, proving the claim.
\end{proof}
\begin{cor}
With notation as in Lemma~\ref{L:identify conductor}, 
if $b_j$ is the Swan conductor of $\pi^{\otimes p^{n-j}}$, then $b_j \geq p b_{j-1}$ for $j=1,\dots,n$.
\end{cor}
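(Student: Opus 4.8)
The plan is to put $\pi^{\otimes p^{n-j}}$ into Artin--Schreier--Witt form and then apply Lemma~\ref{L:identify conductor} at the level $j$ rather than $n$. Since the isomorphism \eqref{eq:asw} (hence the whole correspondence of Lemma~\ref{L:identify conductor}) is an isomorphism of abelian groups, it is $\ZZ$-linear, so multiplying a cohomology class by $p^{n-j}$ multiplies the corresponding Witt vector by $p^{n-j}$. Thus, viewed as a character into $\ZZ/p^n\ZZ$, $\pi^{\otimes p^{n-j}}$ corresponds to the class of $p^{n-j}\overline{\underline{a}}$ in $\coker(1-\varphi, W_n(F))$. Using \eqref{eq:phi V char p} and the commutation $V\circ\varphi=\varphi\circ V$, I would compute
\[
p^{n-j}\overline{\underline{a}} = (V\circ\varphi)^{n-j}(\overline{\underline{a}}) = V^{n-j}\bigl(\overline{a}_0^{p^{n-j}},\dots,\overline{a}_{j-1}^{p^{n-j}}\bigr),
\]
that is, the Witt vector whose first $n-j$ components vanish and whose remaining components are $\overline{a}_0^{p^{n-j}},\dots,\overline{a}_{j-1}^{p^{n-j}}$; in particular it lies in the image of $V^{n-j}\colon W_j(F)\hookrightarrow W_n(F)$.

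Next I would exploit the fact that $V^{n-j}\colon W_j(F^{\sep})\hookrightarrow W_n(F^{\sep})$ is $G_F$-equivariant and commutes with $1-\varphi$, so it defines a map of the defining short exact sequences and hence intertwines the Artin--Schreier--Witt isomorphisms for $W_j$ and $W_n$; on $\FF_p$-points it is the inclusion $\ZZ/p^j\ZZ\cong p^{n-j}\ZZ/p^n\ZZ\hookrightarrow\ZZ/p^n\ZZ$. Consequently, regarding $\pi^{\otimes p^{n-j}}$ as a character into $\ZZ/p^j\ZZ$ (which does not change its Swan conductor, this being an invariant of the character as a map on $G_F$), it corresponds under the level-$j$ isomorphism to the class of $\varphi^{n-j}(\overline{a}_0,\dots,\overline{a}_{j-1})$ in $\coker(1-\varphi, W_j(F))$. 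Since $\xi\equiv\varphi(\xi)$ modulo $(1-\varphi)W_j(F)$ for every $\xi$, this class coincides with that of the truncation $(\overline{a}_0,\dots,\overline{a}_{j-1})\in W_j(F)$.

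The hypotheses of Lemma~\ref{L:identify conductor} are inherited by this truncation: the negated valuations $m_0,\dots,m_{j-1}$ are unchanged, and none of them is a positive multiple of $p$. Applying the lemma with $n$ replaced by $j$ gives
\[
b_j = \max\{0,\, m_0 p^{j-1},\, m_1 p^{j-2},\, \dots,\, m_{j-1}\}.
\]
Running the same formula with $j-1$ in place of $j$ and multiplying by $p$ yields $pb_{j-1}=\max\{0,\, m_0 p^{j-1},\, m_1 p^{j-2},\, \dots,\, m_{j-2}p\}$, a maximum taken over a subset of the quantities appearing in the expression for $b_j$; hence $b_j\ge pb_{j-1}$, which is the claim.

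The step I expect to require the most care is the identification in the second paragraph: correctly tracking how $\pi^{\otimes p^{n-j}}$, which a priori takes values in the subgroup $p^{n-j}\ZZ/p^n\ZZ$, is reinterpreted as a $\ZZ/p^j\ZZ$-valued character and matching that reinterpretation with the Verschiebung on the Witt-vector side (together with the harmless replacement of a Witt vector by its $\varphi$-twist inside the cokernel). Everything else is a direct Witt-vector computation plus elementary manipulation of the explicit formula furnished by Lemma~\ref{L:identify conductor}.
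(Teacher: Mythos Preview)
Your argument is correct and follows essentially the same route as the paper: identify $\pi^{\otimes p^{n-j}}$ with the class of $p^{n-j}\overline{\underline{a}}$, rewrite this via \eqref{eq:phi V char p} as $V^{n-j}$ of the truncated Witt vector (up to a Frobenius twist that vanishes in the cokernel), and then apply Lemma~\ref{L:identify conductor} at level $j$. You have simply spelled out in detail the passage from level $n$ to level $j$ and the final inequality between the two maxima, both of which the paper leaves implicit.
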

\begin{proof}
The representation $\pi^{\otimes p^{n-j}}$ corresponds via \eqref{eq:asw}
to the class of $p^{n-j} \overline{\underline{a}}$ in $\coker(\varphi-1,W_j(F))$.
By \eqref{eq:phi V char p}, this class is also represented by $V^{n-j}(\overline{\underline{a}})$. We may now apply Lemma~\ref{L:identify conductor} to deduce the claim.
\end{proof}

We now make explicit the relationship between the Kummer and Artin-Schreier-Witt isomorphisms.
\begin{theorem}[Matsuda] \label{T:matsuda classification}
The homomorphism \eqref{eq:kasw} is induced by a homomorphism
\begin{equation} \label{eq:matsuda hom}
W_n(\calR^{\inte}) \to (\calR^{\inte})^\times,
\qquad
\underline{a} \mapsto E_{n,p}(p^n \underline{a}).
\end{equation}
\end{theorem}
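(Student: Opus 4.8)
The statement packages three assertions, which I would establish in order: first, that $\Theta\colon\underline{a}\mapsto E_{n,p}(p^{n}\underline{a})$ is a well-defined group homomorphism $W_{n}(\calR^{\inte})\to(\calR^{\inte})^{\times}$; second, that composing $\Theta$ with $(\calR^{\inte})^{\times}\hookrightarrow(\calR^{\bd})^{\times}\to(\calR^{\bd})^{\times}/(\calR^{\bd})^{\times p^{n}}$ gives a map factoring through $W_{n}(\calR^{\inte})\to W_{n}(F)\to\coker(1-\varphi,W_{n}(F))$; third, that the induced homomorphism on the cokernel is exactly \eqref{eq:kasw}. For the first, observe that for any ring $R$ the quotient $W_{n}(R)/W_{n}(pR)\cong W_{n}(R/pR)$ is the Witt ring of an $\FF_{p}$-algebra, hence killed by $p^{n}$ by \eqref{eq:phi V char p}; so $p^{n}W_{n}(\calR^{\inte})\subseteq W_{n}(p\calR^{\inte})$, meaning all components of $p^{n}\underline{a}$ have Gauss norm $\leq|p|<1$. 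By Lemma~\ref{L:Artin-Hasse}, $E_{n,p}(p^{n}\underline{a})$ then converges and lies in $1+p\calR^{\inte}\subseteq(\calR^{\inte})^{\times}$, and substituting $p^{n}\underline{a},p^{n}\underline{b}$ into \eqref{eq:Witt vector sum} gives $\Theta(\underline{a}+\underline{b})=\Theta(\underline{a})\Theta(\underline{b})$.

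The engine for the second assertion is the formal identity $E_{n,p}(p^{n}\underline{a})=E_{n,p}(\underline{a})^{p^{n}}$, valid over $\ZZ_{(p)}[\zeta_{p^{n}}]\otimes_{\ZZ}\QQ$: by \eqref{eq:ghost components} and the $\ZZ$-linearity of the ghost map, $\log E_{n,p}(p^{n}\underline{a})=p^{n}\log E_{n,p}(\underline{a})$. Hence if all components of $\underline{a}$ have Gauss norm $<1$ — equivalently $\underline{a}\in W_{n}(\frakm_{K}\calR^{\inte})=\ker(W_{n}(\calR^{\inte})\to W_{n}(F))$ — then $E_{n,p}(\underline{a})$ itself converges and $\Theta(\underline{a})=E_{n,p}(\underline{a})^{p^{n}}\in(\calR^{\inte})^{\times p^{n}}$, so $\Theta$ descends to $\overline{\Theta}\colon W_{n}(F)\to(\calR^{\bd})^{\times}/(\calR^{\bd})^{\times p^{n}}$. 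The same ghost-component computation gives $F_{n,p}(\underline{a})^{p^{n}}=E_{n,p}(p^{n}\underline{a})/E_{n,p}(p^{n}\sigma(\underline{a}))$; here $F_{n,p}(\underline{a})$ converges for every $\underline{a}\in W_{n}(\calR^{\inte})$ (Lemma~\ref{L:overconvergent}(b)) and lies in $(\calR^{\inte})^{\times}$ since $F_{n,p}\equiv 1\pmod{\frakm_{K}}$ (all the $\zeta_{p^{m}}-1$ lie in $\frakm_{K}$), while the right-hand side converges because $p^{n}\underline{a},p^{n}\sigma(\underline{a})$ have $p$-small components. Since $\sigma$ reduces modulo $\frakm_{K}$ to the Witt Frobenius $\varphi$ on $W_{n}(F)$, applying this and \eqref{eq:Witt vector sum} to $\underline{c}\in W_{n}(\calR^{\inte})$ yields $\Theta(\underline{c}-\sigma(\underline{c}))=E_{n,p}(p^{n}(\underline{c}-\sigma(\underline{c})))=F_{n,p}(\underline{c})^{p^{n}}\in(\calR^{\inte})^{\times p^{n}}$, so $\overline{\Theta}$ kills $(1-\varphi)W_{n}(F)$ and descends to $\overline{\overline{\Theta}}$ on $\coker(1-\varphi,W_{n}(F))$.

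For the third assertion, fix $[\overline{\underline{a}}]\in\coker(1-\varphi,W_{n}(F))$, lift $\overline{\underline{a}}$ to $\underline{a}\in W_{n}(\calR^{\inte})$, and let $\calR^{\inte}_{F'}$ be the integral Robba ring of the finite unramified extension of $\calR^{\bd}$ whose residue field $F'$ is the Artin--Schreier--Witt extension of $F$ cut out by $\overline{\underline{a}}$ via \eqref{eq:asw}; it is again a henselian discrete valuation ring (Lemma~\ref{L:henselian}), and since the Jacobian of $\underline{B}\mapsto\underline{B}-\sigma(\underline{B})$ is a unit, Hensel's lemma produces $\underline{B}\in W_{n}(\calR^{\inte}_{F'})$ with $\underline{B}-\sigma(\underline{B})=\underline{a}$ reducing to a generator of the extension. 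For $g\in\Gal(F'/F)$ the difference $g(\underline{B})-\underline{B}$ lies in $\ker(1-\sigma,W_{n}(\calR^{\inte}_{F'}))$, which reduces isomorphically onto $\ker(1-\varphi,W_{n}(F'))=W_{n}(\FF_{p})=\ZZ/p^{n}\ZZ$; writing $m_{g}\in\{0,\dots,p^{n}-1\}$ for the representative of the corresponding class (the Artin--Schreier--Witt cocycle value), one gets $g(\underline{B})=\underline{B}+m_{g}+\underline{e}_{g}$ with $m_{g}\in\ZZ$ and $\underline{e}_{g}$ of components in $\frakm_{K}\calR^{\inte}_{F'}$. Then $w:=F_{n,p}(\underline{B})\in(\calR^{\inte}_{F'})^{\times}$ satisfies $w^{p^{n}}=E_{n,p}(p^{n}(\underline{B}-\sigma(\underline{B})))=E_{n,p}(p^{n}\underline{a})=\Theta(\underline{a})$ by the identities above and \eqref{eq:Witt vector sum}, so it is an explicit $p^{n}$-th root of $\Theta(\underline{a})$, and $g(w)/w=F_{n,p}(g\underline{B})/F_{n,p}(\underline{B})$ since $g$ fixes the coefficients (which lie in $K$). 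Lemma~\ref{L:identify root}, applied with $(\underline{a},\underline{b},m)=(\underline{B},\underline{e}_{g},m_{g})$, evaluates this ratio as $\zeta_{p^{n}}^{m_{g}}$ times an $E_{n,p}$-factor whose argument has $p$-small components; since $g(w)/w$ is a $p^{n}$-th root of unity (because $(g(w)/w)^{p^{n}}=g(\Theta(\underline{a}))/\Theta(\underline{a})=1$) and that factor lies in $1+p\calR^{\inte}_{F'}$, it must equal $1$, so $g(w)/w=\zeta_{p^{n}}^{m_{g}}$. Under the normalization $\zeta_{p^{n}}\leftrightarrow 1$ of Definition~\ref{D:Artin-Schreier-Witt}, this exhibits the Kummer class of $\Theta(\underline{a})$ as the image of $[\overline{\underline{a}}]$ under \eqref{eq:kasw}, completing the comparison.

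The main obstacle is this third step: carrying out cleanly the comparison between the explicit analytic $p^{n}$-th root $F_{n,p}(\underline{B})$ and the Galois-cohomological Kummer element, and in particular verifying that the spurious $E_{n,p}$-factor produced by Lemma~\ref{L:identify root} is genuinely trivial — a point that at the prime $2$ requires a small separate argument, consistent with the $p>2$ caveat in Definition~\ref{D:Artin-Hasse-Witt2}. The whole computation rests on the evaluation $F_{n,p}(1)=\zeta_{p^{n}}$ of Lemma~\ref{L:value at 1}, which is the source of the root of unity witnessing the identification; Lemma~\ref{L:identify root}, derived from it through \eqref{eq:Witt vector sum} and \eqref{eq:Witt vector sum2}, is tailored precisely to make the conditionally convergent expression $F_{n,p}(\underline{B}+\underline{e}_{g}+m_{g})$ legitimate. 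Steps one and two are routine once the series identities are in hand; one should also note that $\overline{\overline{\Theta}}$ does not depend on the auxiliary choices (the lift $\underline{a}$, the extension $F'$, the solution $\underline{B}$) modulo $p^{n}$-th powers, which follows from the homomorphism property together with the description of $\ker(1-\sigma)$.
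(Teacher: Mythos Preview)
Your strategy is the paper's: establish the homomorphism property via \eqref{eq:Witt vector sum}, build an explicit $p^n$-th root of $E_{n,p}(p^n\underline{a})$ inside the henselian extension, and identify the Galois cocycle using Lemma~\ref{L:identify root}. Your refinements are welcome: solving $\underline{B}-\sigma(\underline{B})=\underline{a}$ exactly via Hensel (rather than carrying the correction $E_{n,p}(\underline{a}+\sigma\underline{b}-\underline{b})$ as the paper does) cleans up the root, and the explicit descent to $\coker(1-\varphi,W_n(F))$ through the identity $F_{n,p}(\underline{c})^{p^n}=\Theta((1-\sigma)\underline{c})$ is tidier than the paper's implicit treatment. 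Your convergence argument for $\Theta$ via $p^nW_n(R)\subseteq W_n(pR)$ is also crisper than the paper's direct estimate.

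The one genuine gap is your dismissal of the ``spurious $E_{n,p}$-factor''. With your exact relation one computes that the argument of that factor equals $\underline{e}_g-\sigma(\underline{e}_g)$, which lies in $W_n(\frakm_K\calR^{\inte}_{F'})$ but not visibly in $W_n(p\calR^{\inte}_{F'})$; and in any case every $p^n$-th root of unity already lies in $1+\frakm_K$, so proximity to $1$ cannot by itself force the factor to be trivial (this is not a $p=2$ issue). The fix is that the identity you need is the one the paper actually \emph{applies} in its computation of $g(f)$, namely with $-\underline{b}$ (not $-\sigma(\underline{b})$) in the $E_{n,p}$-argument: the expression $F_{n,p}(\underline{a}+\underline{b}+m)F_{n,p}(\underline{a})^{-1}E_{n,p}(\sigma(\underline{a}+\underline{b}+m)-\sigma(\underline{a})-\underline{b}-m)$ has formal logarithm $\sum_i(\zeta_{p^{n-i}}-1)p^{-i}\bigl(w_i(\underline{a}+\underline{b}+m)-w_i(\underline{a})-w_i(\underline{b})-w_i(m)\bigr)=0$ by additivity of the ghost map, hence is constantly $\zeta_{p^n}^m$. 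Under that form, your exact-solution setup makes the factor vanish identically, since its argument is $\sigma(g\underline{B})-\sigma(\underline{B})-\underline{e}_g-m_g=(g\underline{B}-\underline{B})-\underline{e}_g-m_g=0$, and you obtain $g(w)/w=\zeta_{p^n}^{m_g}$ directly.
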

\begin{proof}
We first clarify the interpretation of the expression $E_{n,p}(p^n \underline{a})$ as an element of $\calR^{\inte}$. Since by definition $E_{n,p}(\underline{a}) = \prod_{i=0}^{n-1} E_{n-i,p}(a_i)$, this amounts to evaluating
the formal power series $E_{n,p}(p^n t)$ at $t=g$ for some $g = \sum_{i \in \ZZ} g_i z^i \in \calR^{\inte}$.
By Lemma~\ref{L:Artin-Hasse}, there exists $\rho_1 > 1$ such that the series $E_{n,p}(p^n t)$ converges for $|t| < \rho_1$.  By the definition of $\calR$, there exists $\rho_2 \in (0,1)$ such that for any $z$ in any nonarchimedean field containing $K$ with $\rho_2 < |z| < 1$, $|g_i z^i| < \rho_1$ for all but finitely many $i<0$. The same remains true if we replace $\rho_2$ by any larger value in $(0,1)$; by so doing, we can force the inequality $|g_i z^i| < \rho_1$ to hold for all $i<0$. The same inequality also holds for $i \geq 0$ because $|g_i| \leq 1$ by the definition of $\calR^{\inte}$; consequently, $g(z)$ belongs to the region of convergence of $E_{n,p}(p^n t)$, and the evaluation $E_{n,p}(p^n g)$ is well-defined as an analytic function on the annulus $A(\rho_2,1)$. Since both $E_{n,p}(p^n t)$ and $g$ have coefficients in $\frako_K$, the same is true of the composition, so $E_{n,p}(p^n g) \in \calR^{\inte}$.

By \eqref{eq:Witt vector sum}, the map \eqref{eq:matsuda hom} is a homomorphism.
Given $\underline{a}$, choose a minimal finite separable 
extension $S$ of $F$  
such that there exists $\overline{\underline{b}} \in W_n(S)$ with
\[
\overline{\underline{b}} -
\varphi(\overline{\underline{b}}) = \overline{\underline{a}}.
\]
(This amounts to forming a tower of Artin-Schreier extensions over $F$.)
Apply Lemma~\ref{L:henselian} to construct a finite \'etale algebra
$\calS^{\inte}$ over $\calR^{\inte}$ with residue field $S$. Choose a lift $\underline{b} \in \calS^{\inte}$ of $\overline{\underline{b}}$; 
then $\underline{a} + \sigma(\underline{b}) - \underline{b} \in W_n(p \calS^{\inte})$.
By Lemma~\ref{L:overconvergent}(b), we may define an element
\[
f := F_{n,p}(\underline{b}) E_{n,p}( \underline{a} + \sigma(\underline{b}) - \underline{b} ) \in \calS^{\inte}.
\]
Then $f^{p^n} = E_{n,p}(p^n \underline{a})$.

On one hand, the image of $\underline{a}$ in $\coker(\varphi-1, W_n(F))$
corresponds to the element of $H^1(G_F, \ZZ/p^n \ZZ)$ which factors through $H^1(\Gal(S/F), \ZZ/p^n \ZZ)$ and sends $g \in \Gal(S/F)$ to the integer $m \in \ZZ/p^n \ZZ$ for which $\varphi(\overline{\underline{b}}) = \overline{\underline{b}} + m$. On the other hand,
we have $g(\underline{b}) = \underline{b} + m + \underline{c}$
for some $\underline{c} \in W((\zeta_{p^n}-1) \calR_n^{\inte})$, and so
\begin{align*}
g(f) &= F_{n,p}(\underline{b} + \underline{c} + m) E_{n,p}( \underline{a} + \sigma(\underline{b} + \underline{c} + m) - \underline{b}  - \underline{c} - m) \\
&= F_{n,p}(\underline{b} + \underline{c} + m) F_{n,p}(\underline{b})^{-1} E_{n,p}(\sigma(\underline{b} + \underline{c} + m) -\sigma(\underline{b}) - \underline{c} - m)
F_{n,p}(\underline{b}) E_{n,p}( \underline{a} + \sigma(\underline{b}) - \underline{b}) \\
&= \zeta_{p^n}^m f
\end{align*}
by Lemma~\ref{L:identify root}.
It follows that \eqref{eq:matsuda hom} induces \eqref{eq:kasw} as desired.
\end{proof}

\begin{remark}
By comparing a character with its $p$-th power, we may deduce from
Theorem~\ref{T:matsuda classification} that
\[
\frac{E_{n,p}(p^n \underline{a})}{E_{n-1,p}(p^{n-1} \underline{a})}
\in (\calR^{\inte})^{\times p^{n-1}}.
\]
This may also be seen directly from
Lemma~\ref{L:overconvergent} by rewriting the left side as
$G_{n-1,p}(\underline{a})^{p^{n-1}}$. 
\end{remark}

\section{Automorphisms of a formal disc}
\label{sec:automorphisms disc}

To conclude, we use Kummer-Artin-Schreier-Witt theory to translate the Oort local lifting problem into a question about the construction of suitable connections on $\PP^1_K$,
and use this interpretation to describe existing combinatorial invariants connected with the Oort problem in terms of convergence polygons.

\begin{hypothesis}
Throughout \S\ref{sec:automorphisms disc}, retain Hypothesis~\ref{H:kasw}
and additionally fix $\underline{a} \in W_n(\calR^{\inte})$.
\end{hypothesis}

\begin{defn} \label{D:finite cover}
Let $\pi: G_F \to \mu_{p^n}$ be the character corresponding to $\underline{a}$ via the maps
$W_n(\calR^{\inte}) \to \coker(\varphi-1, W_n(F)) \cong H^1(G_F, \mu_{p^n})$ (the latter isomorphism being \eqref{eq:asw}).
For $i=1,\dots,n$, let $b_i$ be the Swan conductor of $\pi^{\otimes p^{n-i}}$.

As before, let $x_1 \in \PP^{1,\an}_K$ denote the generic point of the disc $|z| < 1$.
The residue field $\calH(x_1)$ is the fraction field of a Cohen ring for the field $k(\overline{z})$.
We have an embedding of $\calH(x_1)$ into the completion of $\calR^{\bd}$ for the Gauss norm, arising from an inclusion of Cohen rings lifting the inclusion $k(\overline{z}) \subset F$.

Apply the Katz-Gabber construction \cite{katz-extension} to lift $\pi$ to a representation of $G_{k(\overline{z})}$ unramified away from $\{0,\infty\}$, then identify the latter with a representation $\tilde{\pi}: G_{\calH(x_1)} \to \mu_{p^n}$.
By Crew's analogue of the Katz-Gabber construction for $p$-adic differential equations \cite{crew-canonical},
the character $\tilde{\pi}$ arises from a finite Galois cover of $A(\rho_1, \rho_2)$
for some $\rho_1 < 1 < \rho_2$.
We may then proceed as in Definition~\ref{D:rep to connection} to obtain a rank 1 bundle
$\calE_{n}$ with connection on this subspace. As in \cite[Theorem~3.1]{pulita-rank1}
(modulo a sign convention; see Definition~\ref{D:Kummer}), this connection
can be described explicitly as the free vector bundle on a single generator $\bv$ equipped with the connection 
\[
\nabla(\bv) = \sum_{i=0}^{n-1} \sum_{j=0}^{n-i-1} (\zeta_{p^{n-j}}-1) a_i^{p^j-1}  \bv \otimes da_i.
\]
Formally, we have $d\bv = \bv \otimes d\log E_{n,p}(\underline{a})$.

For $i=1,\dots,n$, put $\calE_i = \calE_n^{\otimes p^{n-i}}$. Then $\calE_i$ corresponds to the character $\pi^{\otimes p^{n-i}}$ of order $p^i$ in a similar fashion.
\end{defn}

\begin{remark} \label{R:finite cover uniqueness}
The construction given in Definition~\ref{D:finite cover}
is precisely the $(\varphi, \nabla)$-module associated to $\pi$ by the work of Fontaine and Tsuzuki \cite{tsuzuki-monodromy}. In particular, it is unique in the sense that given any other construction, the two become isomorphic on $A(\rho_1, \rho_2)$ for some convenient choice of $\rho_1 < 1 < \rho_2$. Similarly, the restriction of the connection
to $A(1, \rho_2)$ is the $(\varphi, \nabla)$-module associated to the restriction of $\pi$ to the decomposition group at $\infty$, and enjoys a similar uniqueness property.

In fact, we can say something stronger. Recall that the Katz extension of a representation of $G_F$ has only tame ramification at $\infty$. Since $\pi$ factors through a $p$-group, its Katz extension must in fact be unramified at $\infty$, so it descends to a representation of the \'etale fundamental group of $\PP^1_K - \{0\}$. Such a representation gives rise to an overconvergent $F$-isocrystal, as shown by Crew
\cite[Theorem~3.1]{crew-isocrystals}; this means that for some choice of $\rho_1 < 1 < \rho_2$, the connection on $A(\rho_1, \rho_2)$ described above extends to the subspace
$|z| > \rho_1$ of $\PP^1_K$.
\end{remark}

\begin{defn}
Let $S$ be the fixed field of $\ker(\pi)$; we may identify $S$ with $k((\overline{u}))$
for some parameter $\overline{u}$. The action of $G_F$ defines a continuous $k$-linear 
action $\tau$ of $\mu_{p^n}$ on $k \llbracket \overline{u} \rrbracket$. A \emph{solution of the lifting problem} for $\pi$ is a lifting of $\tau$ to a continuous $\frako_K$-linear action $\tilde{\tau}$ of $\mu_{p^n}$ on $\frako_K \llbracket u \rrbracket$.
\end{defn}

\begin{conj}[Oort] \label{conj:Oort}
A solution of the lifting problem exists for every $\pi$.
\end{conj}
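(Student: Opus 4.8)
The plan is to reduce Conjecture~\ref{conj:Oort} to a statement about the connections $\calE_1,\dots,\calE_n$ constructed in Definition~\ref{D:finite cover}, and then to prove that statement by induction on $n$. The key translation, in the spirit of the deformation theory of Bertin--M\'ezard and the disc-cover reformulation of Green--Matignon, is this: giving a solution of the lifting problem for $\pi$ is equivalent to producing a finite cyclic cover $\tilde Y \to \PP^1_K$, \'etale over a region $|z| > \rho$ minus the point $\infty$, whose restriction to the boundary annulus $A(\rho_1,\rho_2)$ recovers $\tilde\pi$ and which extends as a \emph{branched} cover over the closed disc $|z| \le \rho_1$ with smooth total space. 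By Remark~\ref{R:finite cover uniqueness} the datum on the annulus is already pinned down as a $(\varphi,\nabla)$-module, so the problem becomes: glue $\calE_n$ to a connection on the disc all of whose singularities are tame after a finite cover, and whose branch locus $B_K$ in the sense of Definition~\ref{D:branch locus} meets the skeleton $\Gamma_{X,Z}$ in a configuration compatible with the Swan conductors $b_1,\dots,b_n$. By Theorem~\ref{T:ramification} this last condition is exactly a prescription of the first slope of the convergence polygon $\calN$ of $\calE_n$ along the relevant branches, and by Theorem~\ref{T:Matsuda} the remaining derivative data $\partial_{\vec t}(\calN)$ are forced to encode the Swan conductors of the residual representations; so the whole question is recast as the realizability of a convergence polygon on $\Gamma$ with prescribed piecewise-affine behavior.

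First I would handle the base case $n=1$. Here the relevant connection is a twist of the one in Example~\ref{exa:cyclic1}, and the assertion is the theorem of Oort--Sekiguchi--Suwa and Green--Matignon: every $\ZZ/p\ZZ$-action on $k\llbracket \overline u\rrbracket$ lifts over a suitable finite extension of $K$. In the present language this is the statement that the Kummer--Artin--Schreier correspondence of Theorem~\ref{T:matsuda classification} can be realized \emph{geometrically}, using the Artin--Hasse exponential $E_p$ and the group scheme interpolating $\mu_p$ and $\ZZ/p\ZZ$ over $\ZZ_p[\zeta_p]$; the series $f(z,t)$ of Lemma~\ref{L:value at 1}, with its property $f(\zeta_p,t)=\zeta_p$, is precisely the coordinate change witnessing the lift.

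For the inductive step, assume the lifting problem is solved for the character $\pi^{\otimes p}$ of order $p^{n-1}$, so that one has a lift $\tilde\tau'$ of the $\mu_{p^{n-1}}$-action, and the task is to promote it by one level to a $\mu_{p^n}$-action. Following the strategy of Obus--Wewers, this is done in two stages. First one deforms the would-be cover so that its branch locus on $\Gamma_{X,Z}$ is ``equidistant'', i.e.\ so that the associated combinatorial datum is a \emph{Hurwitz tree} carrying the conductor labels $b_1,\dots,b_n$; the relevant integrality constraint here is the inequality $b_i \ge p\,b_{i-1}$ from the Corollary following Lemma~\ref{L:identify conductor}. Then one shows that any abstract Hurwitz tree with admissible conductor data can be lifted: the ``leaf'' problems are again $\ZZ/p\ZZ$-lifting problems after passing to residue fields (Theorem~\ref{T:Matsuda}), hence unobstructed by the base case, while the ``edge'' compatibilities are affine and solvable by a dimension count using the index bookkeeping of Lemma~\ref{L:sum of virtual local indices} and Theorem~\ref{T:virtual local index}. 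Convergence polygons serve throughout as the organizing device: by Theorem~\ref{T:continuous2} and Theorem~\ref{T:Turrittin} the function $\calN$ of $\calE_n$ is piecewise affine on $\Gamma$ with integer slopes computing exactly the irregularity and Swan data, so building a Hurwitz tree is the same as exhibiting an admissible $\calN$ on $\Gamma$ with the prescribed boundary derivatives and then realizing it by a connection via the explicit formula $d\bv = \bv\otimes d\log E_{n,p}(\underline a)$ of Definition~\ref{D:finite cover}.

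I expect the main obstacle to be precisely this final realization step: showing that the metric/combinatorial datum (a Hurwitz tree, equivalently an admissible convergence polygon on $\Gamma$ with prescribed boundary slopes) can be realized by an \emph{honest} cyclic cover with smooth total space, rather than merely by a formal or numerical gadget. This is where the ``conditional convergence'' phenomena flagged after Lemma~\ref{L:identify root}, and the $p$-adic Liouville pathologies of Remark~\ref{R:star2}, must be shown to cause no trouble --- and here one invokes Remark~\ref{R:finite morphism liouville}, which guarantees that connections of geometric origin never see Liouville obstructions, so that Theorem~\ref{T:virtual local index}(c) is available even though $p>0$. Keeping the Euler-characteristic (Riemann--Hurwitz) bookkeeping exactly balanced while the branch points are moved into equidistant position, and maintaining at each stage the $(\varphi,\nabla)$-structure of Remark~\ref{R:finite cover uniqueness} --- which is what ultimately forces the local lifts to be unobstructed --- is the technical heart of the argument.
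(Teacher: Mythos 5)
The statement you have been asked to prove is labelled a \emph{conjecture} in the paper, not a theorem, and the paper gives no proof of it. What the paper records (Theorem~\ref{T:Oort}) is the theorem of Obus--Wewers and Pop, which is the weaker assertion that a solution of the lifting problem exists over \emph{some finite extension} of $K$; and even for that, the paper explicitly declines to discuss the techniques of proof. So there is no argument in the paper to compare yours against, and the target you are aiming at (existence over the fixed $K$ of Hypothesis~\ref{H:kasw}) is, as the paper frames it, strictly stronger than what is currently known.

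As for the sketch itself: the broad strategy (reduce to a statement about the connections $\calE_i$, induct on $n$, use Hurwitz-tree combinatorics in the spirit of Brewis--Wewers and Obus--Wewers) is indeed the one in the literature, but the two pivotal steps are treated as if they were routine when they in fact constitute the entire difficulty. First, the realization step --- passing from an ``admissible convergence polygon on $\Gamma$ with prescribed boundary slopes'' to an actual cyclic cover with smooth total space --- is not a dimension count. The index and subharmonicity results (Lemma~\ref{L:sum of virtual local indices}, Theorem~\ref{T:virtual local index}) give \emph{necessary} constraints that a convergence polygon coming from a cover must satisfy; they do not produce a cover from a candidate polygon, and the paper treats Hurwitz trees as a consequence of a solution (Theorem~\ref{T:continuous for lifting problem}), not a sufficient condition for one. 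Second, your invocation of Remark~\ref{R:finite morphism liouville} is circular: that remark guarantees good $p$-adic exponents only for connections that \emph{already} arise from finite morphisms, while you want to use it in the middle of manufacturing such a morphism. Third, the inductive structure you describe does not match what actually happens in Obus--Wewers and Pop: Obus--Wewers prove the result under a nontrivial hypothesis on the jumps in the conductor sequence, and Pop's separate contribution is a delicate equicharacteristic deformation reducing the general case to theirs; the corollary following Lemma~\ref{L:identify conductor} ($b_i \ge p\,b_{i-1}$) is a constraint the conductors automatically satisfy, not an ``equidistance'' or admissibility hypothesis one gets to impose. None of these points are bridged by the tools developed in the paper, which is consistent with the paper's own modest suggestion that the convergence-polygon machinery ``may'' yield \emph{additional} results about the lifting problem (e.g.\ for dihedral groups) rather than a proof of the conjecture itself.
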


A spectacular breakthrough on this problem has been made recently in work of Obus--Wewers and Pop \cite{obus-wewers, pop}.
\begin{theorem} \label{T:Oort}
For fixed $\pi$, a solution of the lifting problem exists over some finite extension of $K$
(that is, the lifting problem is solved if we do not insist on the field of definition).
\end{theorem}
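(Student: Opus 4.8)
The plan is to run the translation already set up in \S\ref{sec:kasw} and \S\ref{sec:automorphisms disc}: a solution of the lifting problem for $\pi$ is the same datum as a $\mu_{p^n}$-Galois cover $Y \to D$ of a closed disc $D$ over $\frako_{K'}$ ($K'$ a finite extension of $K$ to be produced), \'etale away from a finite set $Z$ of closed points all reducing to the origin, whose special fibre realizes the prescribed action $\tau$ on $k\llbracket \bar u\rrbracket$, and whose restriction to the boundary annulus is the connection $\calE_n$ of Definition~\ref{D:finite cover}. By Theorem~\ref{T:ramification} the branch locus $B_{K'}\subset D$ must then be a finite set of type-$2$ and type-$3$ points (away from $Z$); by Theorem~\ref{T:Matsuda}, the derivative of the convergence polygon along each branch records a Swan conductor, and the admissible discrete invariants along the tower $\calE_i=\calE_n^{\otimes p^{n-i}}$ are governed by the refined Swan conductors $b_1,\dots,b_n$ of Lemma~\ref{L:identify conductor} and its corollary ($b_j\ge p\,b_{j-1}$). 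So the first reduction is: construct a connection on $D\smallsetminus Z$ with regular singularities of order-$p^n$ monodromy at the points of $Z$, with boundary restriction $\calE_n$, and with the prescribed branch data --- equivalently, realize an admissible convergence polygon compatible with Theorem~\ref{T:continuous2} and, in view of Remark~\ref{R:finite morphism liouville}, with Theorem~\ref{T:virtual local index}(c).

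First I would induct on $n$. Since $\calE_i=\calE_n^{\otimes p^{n-i}}$, a lift of $\pi$ restricts to a lift of $\pi^{\otimes p}$, so one may assume a solution for the $\mu_{p^{n-1}}$-quotient and must propagate it across the degree-$p$ step $\calE_n\to\calE_{n-1}$. The base case $n=1$ (cyclic of order $p$) is the classical lifting theorem of Oort--Sekiguchi--Suwa and Green--Matignon, obtained by writing the lift explicitly through the Kummer--Artin--Schreier degeneration attached to $E_{1,p}$. For the inductive step I would organize the data along a \emph{Hurwitz tree}: a finite tree embedded in the skeleton $\Gamma_{\PP^1_{K},Z}$ whose vertices carry the residual covers of the components $C_x$ and whose edges (corresponding to annuli) carry \emph{differential data} --- meromorphic differentials on the $C_x$ with poles, orders, and residues dictated by the Swan conductors along the adjacent branches.

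Then I would solve the problem from the leaves inward. At a leaf the residual cover is elementary and lifts by the explicit Artin--Hasse--Witt formulas of \S\ref{sec:Artin-Hasse} (enlarging $Z$ if necessary, as in Remark~\ref{R:enlarge Z for index} and Remark~\ref{R:cyclic change Z}), and one checks that the differential datum on the adjacent annulus exists. It is exactly at the interior vertices that one is forced to pass to a finite extension $K'$: realizing the prescribed polar parts and residues of the required differentials forces the branch points to be moved apart and auxiliary constants (roots of the relevant residue equations) to be adjoined. After a further finite base change to rigidify the patching, I would glue the local lifts across the annuli using the coherence of the differential data into a global $\mu_{p^n}$-cover of $D$ over $\frako_{K'}$, and read off the induced continuous $\frako_{K'}$-linear action $\tilde\tau$ on $\frako_{K'}\llbracket u\rrbracket$, completing the induction.

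The main obstacle is the solvability of the differential data at the interior vertices of the Hurwitz tree: one must produce, on each component, differentials with poles only at the designated points, prescribed polar parts compatible with $b_1,\dots,b_n$, and residues summing to zero, while simultaneously matching the Artin--Hasse--Witt lifts already chosen at the leaves. This is a nonlinear matching problem whose obstruction is a class in a suitable $H^1$ of the tree; it is here that Obus--Wewers reduce matters to their refined Swan (``different'') inequality and Pop supplies the missing solvability. The freedom to enlarge $K$ is precisely what makes the parameter space of branch configurations large enough for such differential data to exist, which is why the conclusion is asserted only over some finite extension of $K$.
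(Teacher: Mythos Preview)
The paper does not prove this theorem at all: it is stated with attribution to Obus--Wewers and Pop, and the very next sentence reads ``We will not say anything more here about the techniques used to prove Theorem~\ref{T:Oort}.'' So there is no proof in the paper to compare against; the paper treats the result as a black box and then, in the subsequent material, explains how a solution of the lifting problem can be \emph{reinterpreted} in terms of convergence polygons (Theorem~\ref{T:continuous for lifting problem}, the Hurwitz tree remark).

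Your proposal is not an independent proof either: it is a narrative outline of the Obus--Wewers--Pop strategy, recast in the vocabulary of \S\ref{sec:kasw}--\ref{sec:automorphisms disc}, and at the decisive step (``Obus--Wewers reduce matters to their refined Swan (`different') inequality and Pop supplies the missing solvability'') you invoke exactly the same two papers the author cites. So functionally your proposal and the paper's treatment coincide --- both rest on \cite{obus-wewers, pop} --- yours simply unpacks the citation one level further. That said, a couple of points in your unpacking are off. First, you front-load the convergence-polygon and branch-locus language (Theorems~\ref{T:ramification}, \ref{T:Matsuda}, \ref{T:virtual local index}) as if they were inputs to the proof, but in the paper these appear as \emph{consequences} or reformulations once a lift exists; the actual Obus--Wewers--Pop argument proceeds via Hurwitz trees and deformation of differential data, not via convergence polygons. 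Second, your description of $B_{K'}$ as ``a finite set of type-$2$ and type-$3$ points (away from $Z$)'' is not quite right: $B_L$ as defined in Definition~\ref{D:branch locus} contains $Z$ (type~1 points), and the remaining part is generally not finite but a one-dimensional subset of the skeleton (compare Example~\ref{exa:cyclic1}, where $B_L$ contains all points of normalized diameter in an interval). Finally, the equivalence you assert in your first paragraph between a solution of the lifting problem and the cover/connection data on the disc is precisely what the paper establishes \emph{after} Theorem~\ref{T:Oort} (Definition~\ref{D:solution of lifting problem} and the converse remark), so invoking it as the opening reduction is circular relative to the paper's logical order.
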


We will not say anything more here about the techniques used to prove Theorem~\ref{T:Oort}. Instead, we describe an equivalence between solutions of the lifting problem for $\pi$ and extensions of the connection on $\calE_{n}$.

\begin{defn} \label{D:solution of lifting problem}
Suppose that $\tilde{\tau}$ is a solution of the lifting problem. 
Then $\tilde{\tau}$ gives rise to a finite Galois cover of the disc $|z| < 1$, which 
thanks to Remark~\ref{R:finite cover uniqueness}
may be glued together with the cover from Definition~\ref{D:finite cover} to give a finite ramified cover $f_n: Y_n \to X$ with $X = \PP^1_K$, such that $x_1$ has a unique preimage in $Y_n$. 
(This cover is constructed \emph{a priori} at the level of analytic spaces, but descends to a cover of schemes by rigid GAGA; see Remark~\ref{R:index conditions}.)
For $i=1,\dots,n$, let $f_i: Y_i \to X$ be the cover corresponding to $\rho^{p^{n-i}}$ in similar fashion, and let $Z_i$ be the ramification locus of $f_i$;
also put $Z_0 = \emptyset$. For each $x \in Z_i - Z_{i-1}$, $\calE_n$ is regular at $x$ with exponent $m/p^{n-i+1}$ for some $m \in \ZZ - p\ZZ$.
\end{defn}

Using the Riemann-Hurwitz formulas in characteristic $0$ and $p$, we obtain the following relationship between the ramification of $\tau$ and of $f_n$.

\begin{lemma} \label{L:RH comparison}
With notation as in Definition~\ref{D:solution of lifting problem}, for $i=1,\dots,n$,
\begin{align}
\label{eq:rh1}
2-2g(Y_i) &= 2p^i - \sum_{j=1}^i (p^i - p^{j-1}) (\length(Z_j) - \length(Z_{j-1})) \\
\label{eq:rh2}
&=2p^i - \sum_{j=1}^i (p^j - p^{j-1}) b_j.
\end{align}
Consequently,
\begin{equation} \label{eq:rh3}
\length(Z_i) = b_i + 1 \qquad (i=1,\dots,n).
\end{equation}
\end{lemma}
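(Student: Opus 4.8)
The plan is to compute $2-2g(Y_i)$ in two ways and compare. First I would apply Riemann--Hurwitz to $f_i\colon Y_i\to\PP^1_K$ in characteristic $0$; this cover is tamely ramified, and by the construction in Definition~\ref{D:solution of lifting problem} together with Remark~\ref{R:finite cover uniqueness} its branch locus is exactly $Z_i$ (the étale ``outside'' piece coming from the $F$-isocrystal contributes nothing, in particular nothing over $\infty$). The key local input: over a point $x\in Z_j\setminus Z_{j-1}$ with $j\le i$, the connection $\calE_i=\calE_n^{\otimes p^{n-i}}$ is regular with exponent of exact order $p^{i-j+1}$ in $\QQ/\ZZ$ (because $\calE_n$ has exponent $m/p^{n-j+1}$ with $m\notin p\ZZ$), so the inertia of $f_i$ over $x$ is cyclic of order $p^{i-j+1}$, there are $p^{j-1}$ points of $Y_i$ above $x$, and the fibre over $x$ contributes $p^{j-1}(p^{i-j+1}-1)=p^i-p^{j-1}$ to the degree of the ramification divisor. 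Summing over the $\length(Z_j)-\length(Z_{j-1})$ such $x$, for $j=1,\dots,i$, gives \eqref{eq:rh1}.

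Second, I would recompute $g(Y_i)$ from the characteristic-$p$ side. Since $Y_i$ arises by lifting a $\ZZ/p^i$-cover in characteristic $p$, its genus equals that of the corresponding characteristic-$p$ curve $\bar Y_i$, and $\bar Y_i\to\PP^1_k$ is the $\ZZ/p^i$-cover branched only at (the reduction of) $0$, totally wildly ramified there. Its unique ramified point contributes to the degree of the ramification divisor the valuation of the different of the degree-$p^i$ extension $L_i$ of $F=k((\overline{z}))$; by the conductor--discriminant formula this is the sum of the Artin conductors of the characters of $\ZZ/p^i$. All characters of a given order $p^l$ have the same kernel, hence the same Swan conductor $b_l$, and there are $p^l-p^{l-1}$ of them for $l=1,\dots,i$ (the trivial character contributing $0$). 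Feeding this into Riemann--Hurwitz in characteristic $p$ and using $g(Y_i)=g(\bar Y_i)$ yields \eqref{eq:rh2}; here one must track carefully whether each ramified character contributes its Swan conductor or its Artin conductor and reconcile this with the normalization of $b_j$, invoking Hasse--Arf so that the $b_j$ are integers, consistently with the inequality $b_j\ge p\,b_{j-1}$ from the corollary to Lemma~\ref{L:identify conductor}.

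For \eqref{eq:rh3}, I would equate \eqref{eq:rh1} and \eqref{eq:rh2}. Summation by parts collapses the characteristic-$0$ side, $\sum_{j=1}^i(p^i-p^{j-1})(\length(Z_j)-\length(Z_{j-1}))=\sum_{j=1}^i(p^j-p^{j-1})\length(Z_j)$, so the comparison becomes a single linear relation among $\length(Z_1),\dots,\length(Z_i)$ and $b_1,\dots,b_i$ that holds for every $i$; subtracting the relation for $i-1$ from that for $i$ isolates the top coefficient $p^i-p^{i-1}$, and an induction on $i$ gives $\length(Z_i)=b_i+1$.

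The step I expect to be the main obstacle is the local ramification bookkeeping on both sides: in characteristic $0$, pinning down that the inertia of $f_i$ over a point of $Z_j\setminus Z_{j-1}$ is cyclic of order exactly $p^{i-j+1}$ (which rests on the exponent computation recorded in Definition~\ref{D:solution of lifting problem} and on the $\calE_i$ genuinely arising from cyclic covers), and in characteristic $p$, evaluating the different of the wildly ramified tower $F\subset L_1\subset\cdots\subset L_i$ correctly in terms of the $b_j$ via conductor--discriminant and Hasse--Arf. Granting both Riemann--Hurwitz computations, the passage to \eqref{eq:rh3} is purely combinatorial.
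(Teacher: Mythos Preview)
Your approach is essentially the same as the paper's: apply Riemann--Hurwitz to $f_i$ in characteristic $0$ for \eqref{eq:rh1}, apply Riemann--Hurwitz to the reduction $\overline{f}_i:\overline{Y}_i\to\PP^1_k$ in characteristic $p$ for \eqref{eq:rh2} (using $g(Y_i)=g(\overline{Y}_i)$), and then compare the two by induction on $i$ to get \eqref{eq:rh3}. The one methodological difference is how you compute the local different at $0$ on the characteristic-$p$ side: the paper reads it off directly from the lower-numbering filtration $\sum_{m\ge 0}(\#\Gal(F_i/F)_m-1)$, invoking the explicit break computation carried out in the proof of Lemma~\ref{L:identify conductor}, whereas you use the conductor--discriminant formula $d(L_i/F)=\sum_\chi a(\chi)$ and count characters of each order. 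These are equivalent; your route is arguably cleaner since it avoids converting between upper and lower numbering, but it does force you to be careful about the $+1$ between Artin and Swan conductors---a subtlety you already flag. Your summation-by-parts identity $\sum_{j=1}^i(p^i-p^{j-1})(\length(Z_j)-\length(Z_{j-1}))=\sum_{j=1}^i(p^j-p^{j-1})\length(Z_j)$ is exactly what makes the inductive extraction of \eqref{eq:rh3} work, matching the paper's ``solve for $b_i$ in succession.''
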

\begin{proof}
The Rieman-Hurwitz formula for $f_i$ asserts that
\[
2 - 2g(Y_i) = \deg(f_i)(2-2g(\PP^1_K)) - \sum_{z \in X} (\deg(f_i) - \length(f_i^{-1}(z))).
\]
For $z \in X$, we have $\deg(f_i) - \length(f_i^{-1}(z)) = 0$ unless $z \in Z_i$.
If $z \in Z_j - Z_{j-1}$ for some $j \in \{1,\dots,i\}$, then 
each preimage of $z$ in $Y_i$ is fixed by a group of order $p^{i-j+1}$, so 
$\length(f_i^{-1}(z)) = p^{j-1}$.
This yields \eqref{eq:rh1}.

Let $\overline{f}_i: \overline{Y}_i \to \PP^1_k$ be the reduction of $f_i$;
then $g(\overline{Y}_i) = g(Y_i)$.
Set notation as in the proof of Lemma~\ref{L:identify conductor}.
Since $\overline{f}_i$ is Galois and only ramifies above $0$,
the Riemann-Hurwitz formula for $\overline{f}_i$ (see \cite[Corollary~3.4.14, Theorem~3.8.7]{stichtenoth}) can be written in the form
\[
2 - 2g(\overline{Y}_i) = \deg(f_i)(2-2g(\PP^1_k)) - \sum_{m=0}^\infty (\#\Gal(F_i/F)_m - 1)
\]
where $\Gal(F_i/F)_m$ denotes the $m$-th subgroup of $\Gal(F_i/F)$ in the lower numbering filtration. By identifying these subgroups as in the proof of Lemma~\ref{L:identify conductor}, we obtain \eqref{eq:rh2}. By combining \eqref{eq:rh1} and \eqref{eq:rh2}, we may solve for 
$b_i$ for $i=1,\dots,n$ in succession to obtain \eqref{eq:rh3}.
\end{proof}

We have the following explicit version of Theorem~\ref{T:continuous2} in this setting.
\begin{theorem} \label{T:continuous for lifting problem}
Retain notation as in Definition~\ref{D:solution of lifting problem}.
Let $\Gamma = \Gamma_{X, Z_n \cup \{\infty\}}$ be the union of the paths from $\infty$ to the elements of $Z_n$.
Let $\calN_n$ be the the convergence polygon of $\calE_n$.
\begin{enumerate}
\item[(a)]
The function
$\calN_n$ factors through the retraction of $X^{\an}$ onto $\Gamma$,
and is affine on each edge of $\Gamma$. 
\item[(b)]
Let $\Gamma_\infty \subset \Gamma$ be the path from $x_1$ to $\infty$.
For each $x \in \Gamma_\infty$,  
$s_1(\calN_n(x)) = 0$.
\item[(c)]
The measure $\chi(\calE)$ (more precisely, the measure $\chi(\calE, Z_n)$ in the notation of Remark~\ref{R:enlarge Z for index}) is discrete, supported at $x_1$, of total measure $1-b_n$.
\item[(d)]
For each $x \in \Gamma - \Gamma_\infty$, for $\vec{t}$ the branch of $x$ towards $x_1$, $\partial_{\vec{t}} s_1(\calN_n) = \ell-1$ where $\ell$ is the length of the subset of $Z_n$ dominated by $x$.
\item[(e)]
For $i=1,\dots,n$, for each $x \in Z_i - Z_{i-1}$, let $\Gamma_x$ be the pendant edge of $\Gamma$ terminating at $x$. For each $y \in \Gamma_x$,
\begin{equation} \label{eq:lifting problem constant value}
s_1(\calN_n(y)) = \left( n-i+1 + \frac{1}{p-1} \right) \log p.
\end{equation}
\end{enumerate}
\end{theorem}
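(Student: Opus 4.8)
\medskip
\noindent\emph{Proof proposal.}
The connection $\calE_n$ has rank one and, in the sense of Definition~\ref{D:rep to connection}, corresponds to the character $\pi$ of order $p^n$, hence to the cyclic cover $f_n$; its singular locus is exactly $Z_n$, which by construction (Definition~\ref{D:finite cover}, Remark~\ref{R:finite cover uniqueness}) lies in $\{|z|<1\}$, since $f_n$ is étale on $\{|z|>\rho_1\}$ for some $\rho_1<1$ and unramified at $\infty$. By Definition~\ref{D:solution of lifting problem}, $\calE_n$ is regular singular at each $x\in Z_i-Z_{i-1}$ with exponent $m/p^{\,n-i+1}$, $p\nmid m$, so $\nabla$ is everywhere regular; hence $\calN_n$ extends continuously over $X^{\an}$ by the corollary to Theorem~\ref{T:Turrittin}, and by Theorem~\ref{T:continuous2} it factors through the retraction onto a strict skeleton and is affine with integral derivative on each edge. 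The plan is thus to pin down the value of $\calN_n$ on $\Gamma_\infty$, its value on each pendant edge, and the integral slopes on the finitely many interior edges of $\Gamma=\Gamma_{X,Z_n\cup\{\infty\}}$.

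For (b) and the ``factors through $\Gamma$'' half of (a): by Remark~\ref{R:finite cover uniqueness}, $\calE_n$ carries a Frobenius structure on $\{|z|>\rho_1\}$, which is unit-root because $\calE_n$ comes from a finite character of an étale fundamental group; as in the Crew argument recalled at the end of \S\ref{sec:ramification}, a unit-root Frobenius structure forces the Robba condition, so $s_1(\calN_n)=0$ on $\{|z|>\rho_1\}$, in particular on $\Gamma_\infty$. In parallel, $\calN_n$ vanishes on the complement of the extended branch locus $B_L$ of $f_n$ by Theorem~\ref{T:ramification} and Lemma~\ref{L:covering space} (over a residue disc containing no branch point the cover is a trivial finite étale cover), and $B_L$ together with the residue discs at $Z_n$ retracts into $\Gamma$; this gives the required factorization.

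For (e) and the rest of (a): near a point $x\in Z_i-Z_{i-1}$ the one-dimensional space of horizontal sections of $\calE_n$ is spanned by $(z-x)^{\lambda}u(z)$ with $\lambda=m/p^{\,n-i+1}$, $p\nmid m$, and $u$ a convergent unit. Expanding at a type-$1$ point $\tilde y$ of the residue disc at $x$, the radius of convergence equals $|\tilde y-x|$ times the radius of convergence $R_\lambda$ of the binomial series $\sum_k\binom{\lambda}{k}w^k$; since $|\tilde y-x|$ is also the radius of the maximal open disc in $U$ about $\tilde y$, the normalization of Definition~\ref{D:convergence polygon general} yields $s_1(\calN_n(\tilde y))=-\log R_\lambda$, independently of $\tilde y$. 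Hence $\calN_n$ is constant on this residue disc, so $\partial_x(\calN)=0$ (matching $\Irr_x(\nabla)=0$ and Theorem~\ref{T:Turrittin}) and $\calN_n$ is affine on every pendant edge. Finally, $\lambda$ has $p$-adic valuation $-(n-i+1)$ and $p\nmid m$, so $\min\{|\lambda-t|:t\in\ZZ_p\}=p^{\,n-i+1}$, whence $-\log R_\lambda=(n-i+1+\tfrac1{p-1})\log p$ by \cite[Proposition~IV.7.3]{dgs} (equivalently by Example~\ref{exa:logarithmic parameter}); this is (e).

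There remains (d), the computation of the integral slopes of $s_1(\calN_n)$ along the interior edges of $\Gamma$, and this is the main obstacle: the value-on-$\Gamma_\infty$ and value-on-pendant-edge computations do not by themselves determine these slopes. I would attack it either (i) by propagating the identification $e^{-s_1(\calN_n)}=\mathrm{dist}(\,\cdot\,,B_L)$ of Theorem~\ref{T:ramification} inward, which requires an explicit description of $B_L$ for the Witt-vector cover $f_n$ extending the rank-one calculation of Example~\ref{exa:cyclic1}, or (ii) by applying Theorem~\ref{T:Matsuda} at the type-$2$ points of $\Gamma_{X,Z_n}$ where the residual cover of $f_n$ is étale, reading off the wild break (Swan conductor) of the residual character and matching it, via the Riemann--Hurwitz bookkeeping of Lemma~\ref{L:RH comparison} and the break computation of Lemma~\ref{L:identify conductor}, with the count $\ell$ of dominated branch points; the delicate point is reconciling the Robba-ring/Witt-vector normalization of \S\ref{sec:kasw} with the coordinate-free normalization of $\calN_n$. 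Granting (d), statement (c) is formal: every $C_x$ has genus $0$ since $X=\PP^1$, so $\chi_x(\calE)=(2-\val_\Gamma(x))-(\Delta h(\calN))_x$ on $\Gamma$, and substituting the slopes from (d) together with $\partial_x(\calN)=0$ at the leaves kills $\chi_x(\calE)$ for every $x\neq x_1$, while $\int\chi(\calE)=\chi_{\dR}(X^{\an},\calE)=\chi(U)-\int\Delta h(\calN)=(2-\length(Z_n))-0=1-b_n$ by Lemma~\ref{L:sum of virtual local indices}, Lemma~\ref{L:laplacian irregularity}, and $\length(Z_n)=b_n+1$ (Lemma~\ref{L:RH comparison}); so $\chi(\calE)$ is discrete, concentrated at $x_1$, of total mass $1-b_n$.
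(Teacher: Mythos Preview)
Your treatment of (b) and (e) is fine and close to the paper's (the paper invokes Theorem~\ref{T:Matsuda}(a) at $x_1$ and the Dwork transfer theorem, then the binomial series for the pendant edges). The real issue is the order in which you attack (c) and (d). You openly leave (d) as ``the main obstacle'' and then derive (c) from (d); the paper does exactly the opposite, and this reversal is the missing idea.

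The paper's route is as follows. Since $\calE_n$ arises from a finite morphism, Remark~\ref{R:finite morphism liouville} guarantees that the non-Liouville hypotheses of Theorem~\ref{T:virtual local index}(c) hold, so one has the pointwise inequality $\chi_x(\calE_n)\le 0$ for every $x\in U^{\an}$. On the other hand, a single local computation at $x_1$ via Theorem~\ref{T:Matsuda}(b) (the Swan conductor of $\pi$ at the unique wildly ramified residue point is $b_n$, and the other residual branches contribute $0$) gives $\chi_{x_1}(\calE_n)=1-b_n$. Since the total mass $\int\chi(\calE_n)$ equals $\chi_{\dR}(X^{\an},\calE_n)=2-\length(Z_n)=1-b_n$ by Lemma~\ref{L:index formula from irregularity} and Lemma~\ref{L:RH comparison}, the single point $x_1$ already accounts for the whole integral of a nonpositive measure; hence $\chi_x(\calE_n)=0$ for all $x\neq x_1$, which is (c). From (c) one reads off (a) (the Laplacian vanishes off $\Gamma$, so $\calN_n$ is harmonic there and must factor through the retraction), and then (d) is obtained by applying the local index formula Theorem~\ref{T:local index formula} to the open star at each vertex: $\chi_x=0$ together with $2-\val_\Gamma(x)$ forces the outgoing slopes to balance in exactly the way (d) asserts.

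In short, you are trying to compute the interior slopes directly (via the branch locus of the Witt cover or a Matsuda-type computation at every type~2 vertex), which is hard; the paper bypasses this entirely by combining the subharmonicity bound $\chi_x\le 0$ with one Swan-conductor computation at $x_1$. Once you have that, (d) is a formal consequence rather than the crux.
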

\begin{proof}
By Theorem~\ref{T:Matsuda}(a), we have $s_1(\calN_n(x_1)) = 0$; by Theorem~\ref{T:transfer}(a), $\calN_n$ is identically zero on all $x$ outside of the closed unit disc. In particular, we deduce (b).

By Theorem~\ref{T:virtual local index}(c) and Remark~\ref{R:finite morphism liouville}, we must have $\chi_x(\calE_n) \leq 0$ for all $x \in X^{\an}$.
By Theorem~\ref{T:index formula} (as reformulated in Theorem~\ref{T:local index formula}),
Lemma~\ref{L:index formula from irregularity},  and Lemma~\ref{L:RH comparison}, we have
\[
\int_{X^{\an}} \chi(\calE_n) = 
\chi_{\dR}(X^{\an}, \calE_n) = \chi_{\dR}(U^{\an}) = 2 - \length(Z_n) = 1 - b_n.
\] 
By the previous paragraph plus Theorem~\ref{T:Matsuda}(b), $\chi_{x_1}(\calE) = 1 -b_n$;
we thus deduce (c), which in turn immediately implies (a). Using Theorem~\ref{T:local index formula} (which again applies in this situation thanks to Remark~\ref{R:finite morphism liouville}), we deduce (d).
To obtain (e), in light of (a) we need only check \eqref{eq:lifting problem constant value} for $y$ in some neighborhood of $x$. This follows by noting that as in Example~\ref{exa:p-th root}, the binomial series $(1 + z)^{1/p^n}$ has radius of convergence $p^{-n-1/(p-1)}$.
\end{proof}

\begin{remark}
Retain notation as in Definition~\ref{D:solution of lifting problem}
and Theorem~\ref{T:continuous for lifting problem}. The union of the paths from $x_1$ to the points of $Z$ then form a tree on which $s_1(\calN_n)$ restricts to a harmonic function which is affine on each edge of the tree, with prescribed values and slopes at the pendant vertices (i.e., $x_1$ and the points of $Z$). However, the existence of such a function imposes strong combinatorial constraints on the relative positions of the points of $Z$; the resulting data are well-known in the literature on the Oort lifting problem, under the rubric of \emph{Hurwitz trees} \cite{brewis-wewers}. Similar data arise in \cite{temkin1, temkin2}.
\end{remark}

\begin{remark}
Conversely, suppose $X = \PP^1_K$; $Z$ equals $\{\infty\}$ plus a subset of the open unit disc;
$\calE$ is a rank 1 vector bundle with connection on $U = X-Z$;
for each $z \in Z$, $\nabla$ is regular at $z$ with exponent in $p^{-n} \ZZ$;
and for some $\rho \in (0,1)$, the restriction of $\calE$ to the space $|z| > \rho$
is isomorphic to $\calE_n$.
The connection $\calE^{\otimes p^n}$ has only removable singularities, so it can be shown to be trivial either by passing to $\CC$ as in the proof of Lemma~\ref{L:index formula from irregularity} and invoking complex GAGA (using the Riemann-Hilbert correspondence
and the fact that $\PP^{1,\an}_{\CC}$ is simply connected), or by applying the Dwork transfer theorem (Theorem~\ref{T:transfer}(a)) to the disc $|z| < \sigma$ for some $\sigma > \rho$.

From this, we may see that $\calE$ arises as $\calE_n$ for some data as in Definition~\ref{D:finite cover}, i.e., that $\calE$ arises from a solution of the lifting problem. For example, to construct the finite map $f: Y \to X$, we may work analytically over $\CC$  (again as in the proof of Lemma~\ref{L:index formula from irregularity}), using complex GAGA to descend to the category of schemes. In the complex analytic situation, we may locally choose a generator $\bv$ of $\calE$, then form $Y$ from $X$ by adjoining $s^{1/p}$ where $s \bv^{\otimes p}$ is a nonzero horizontal section of  $\calE^{\otimes p^n}$. By similar considerations, we see that $f$ is Galois with group $\mu_{p^n}$ and that $\calE \cong \calE_n$.
\end{remark}

\begin{remark}
For a given $\pi$,
it should be possible to construct a moduli space of solutions of the lifting problem
in the category of rigid analytic spaces over $K$. Theorem~\ref{T:Oort} would then imply that this space is nonempty. Given this fact, it may be possible to derive additional results on the lifting problem, e.g., to resolve the case of dihedral groups. For $p>2$, this amounts to showing that if $\tau$ anticommutes with the involution $\overline{z} \mapsto -\overline{z}$, then the action of the involution $z \mapsto -z$ fixes some point of the moduli space.
\end{remark}

\appendix

\section{Convexity}
\label{sec:convexity}

In this appendix, we give some additional technical arguments 
needed for the proofs of Theorem~\ref{T:monotonicity} and
Theorem~\ref{T:virtual local index}(c), which do not appear elsewhere in the literature.
These arguments are not written in the same expository style as the rest of the main text;  for instance, they assume much more familiarity with the author's book \cite{kedlaya-book}.

\begin{defn}
For $I$ a subinterval of $[0, +\infty)$, let $R_I$ be the ring of rigid analytic functions on the space $|t| \in I$ within the affine $t$-line over $K$, as in \cite[Definition~3.1.1]{kedlaya-radii}.
\end{defn}

\begin{hypothesis} \label{H:convexity}
Throughout Appendix~\ref{sec:convexity},
assume $p=0$,
and let $(M,D)$ be a differential module of rank $n$ over $R_{[0,\beta)}$. 
For $I$ a subinterval of $[0,\beta)$, write $M_I$ as shorthand for $M \otimes_{R_{[0,\beta)}} R_I$.
\end{hypothesis}

\begin{defn}
Let $\DD_\beta$ be the Berkovich disc $|t| < \beta$ over $K$.
For $x \in \DD_\beta$, define the real numbers $s_i(M, x)$ for $i=1,\dots,n$
as in \cite[Definition~4.3.2]{kedlaya-radii}; note that they are invariant under extension of $K$ \cite[Lemma~4.3.3]{kedlaya-radii}.
For $r > -\log \beta$, define the functions
\begin{align*}
g_i(M,r) &= -\log s_i(M, x_{e^{-r}}) \\
G_i(M,r) &= g_1(M,r) + \cdots + g_i(M,r).
\end{align*}
\end{defn}

We begin with a variant of Theorem~\ref{T:virtual local index}.
\begin{lemma} \label{L:H1 slope}
The right slope of $G_n(M,r)$ at $r = -\log \beta$ equals $-\dim_K H^1(M)$, provided that at least one of the two is finite.
\end{lemma}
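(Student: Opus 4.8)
The statement relates the right slope of the total convergence function $G_n(M,r)$ at the boundary radius $r = -\log\beta$ to the dimension of the first de Rham cohomology $H^1(M) = \coker(D, M)$ of the differential module $M$ on the disc $\DD_\beta$. My plan is to reduce the computation of $G_n(M,r)$ to a statement about the Newton polygon of a twisted Frobenius-type operator, or more directly, to exploit the link between the slopes of the convergence function and the index of $D$ on subdiscs, following the circle of ideas around the Christol--Mebkhout / Robba index theory as developed in \cite{kedlaya-book} (especially Chapters~11 and 13).

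\textbf{Step 1: Reduce to the case with no zero slopes.} First I would handle the degenerate part of the polygon. By \cite[Theorem~11.3.4]{kedlaya-book} (the decomposition by spectral radius), $M$ decomposes over $R_{[0,\beta')}$ for $\beta'$ slightly less than $\beta$ into a part on which the intrinsic radii of convergence are maximal (the "solvable at the boundary" or Robba-type part, contributing zero slopes to $G_n$ near $r=-\log\beta$) and a complementary part whose slopes $g_i(M,r)$ are strictly positive in a punctured neighborhood of $-\log\beta$. The second part is affine with negative slope near the boundary, so its contribution to the right slope of $G_n$ is a nonnegative integer (by Theorem~\ref{T:continuous1}/integrality of derivatives), and one checks that it matches the $H^1$-contribution of that summand by a transfer/Dwork-type argument. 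The heart of the matter is the Robba part.

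\textbf{Step 2: Handle the Robba part via the index on annuli.} For the summand $M'$ satisfying the Robba condition on a boundary annulus $A(\rho,\beta)$, the function $G_n(M',r)$ is identically zero near $r=-\log\beta$, so its right slope is $0$; I must show correspondingly that $\dim_K H^1(M') = 0$, i.e.\ $D$ is surjective on $M'$ over $R_{[0,\beta)}$. But this is exactly where the hypothesis $p=0$ is essential: in residue characteristic zero there are no $p$-adic Liouville phenomena, so the Robba-condition summand has a full basis of horizontal sections on the annulus (by the $p=0$ form of the Christol--Dwork transfer theorem, cf.\ Theorem~\ref{T:transfer}(a) and \cite[Theorem~13.7.1]{kedlaya-book}) and hence $H^1$ vanishes. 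More generally I would invoke Robba's index formula for differential modules on an open annulus/disc \cite[Chapter~13]{kedlaya-book} — possibly in the refined form of \cite[\S 3.7]{kedlaya-radii} cited elsewhere in the paper — which computes $\dim_K\coker(D)$ on the disc $|t|<\beta$ directly in terms of the behavior of the convergence functions $g_i(M,r)$ as $r\downarrow -\log\beta$: the finite part of the cokernel is governed precisely by the jump in $G_n$, i.e.\ by (minus) the right slope at the endpoint.

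\textbf{Step 3: Assemble and track finiteness.} Combining Steps 1 and 2 gives the equality of the right slope of $G_n(M,r)$ at $-\log\beta$ with $-\dim_K H^1(M)$ whenever both sides are finite; the "provided that at least one is finite" clause is needed because $H^1$ can be infinite-dimensional (the Liouville-type pathology is excluded by $p=0$, but infinite cokernel can still occur when the convergence function has infinite right slope, i.e.\ a vertical jump), and conversely a finite right slope forces, via the index formula, finite-dimensional $H^1$ of exactly that dimension. So the proof is: decompose $M$ at the boundary, dispose of the solvable part using $p=0$ (no Liouville obstruction) to get a zero contribution on both sides, and for the positive-slope part use the Dwork transfer theorem together with Robba's index formula to match the integer right slope with $-\dim_K H^1$.

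\textbf{Main obstacle.} The principal difficulty is Step 2: making precise the identification of the right slope of $G_n$ at the boundary with the index of $D$ on the full disc, in the generality where $M$ need not be decomposed and the cohomology may a priori be infinite-dimensional. This requires a careful citation-level argument pinning down Robba's index computation in the form "$\dim_K H^1(M) = $ (number of zeros of an auxiliary function) $-$ (right slope of $G_n$ at $-\log\beta$)" and verifying that the auxiliary contributions vanish for a module defined on the \emph{closed-at-zero} disc $R_{[0,\beta)}$ (no pole contribution at $t=0$). One must also be vigilant that the normalization of $s_i(M,x)$ in \cite[Definition~4.3.2]{kedlaya-radii} is the one for which $g_i = -\log s_i$ has the stated sign convention, so that decreasing radius of convergence corresponds to increasing $g_i$ and hence to a \emph{negative} right slope matching $-\dim_K H^1 \le 0$.
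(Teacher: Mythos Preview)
The paper's proof of this lemma is a single citation: it invokes \cite[Theorem~3.5.2]{pulita-poineau5}, which is precisely the local index formula of Poineau--Pulita (the same result underlying Theorem~\ref{T:local index formula} in the present paper). In the disc case with $p=0$, that theorem directly identifies $\chi_{\dR}$ on $\DD_\beta$ with minus the outward slope of $G_n$ at the boundary, and since $H^0(M)$ is trivial here (or is absorbed into the statement), the lemma drops out.

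Your approach is genuinely different: you try to reconstruct the disc case of the index formula from earlier ingredients (spectral decomposition from \cite[Chapter~11]{kedlaya-book}, Dwork transfer, Robba's older index formula). This is in the right spirit and is essentially how such index theorems were historically assembled, but there is a real gap in Step~1. The decomposition theorem you cite, \cite[Theorem~11.3.4]{kedlaya-book}, splits $M$ by spectral radius over an \emph{annulus} $R_{(\alpha,\beta)}$ near the boundary, not over a subdisc $R_{[0,\beta')}$. Promoting an annulus decomposition to one over a disc is exactly the content of the ``global decomposition'' results in \cite{pulita-poineau4} or \cite{kedlaya-radii}, and those lie at the same depth as the index formula you are trying to avoid citing. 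Without that, you cannot cleanly separate the $H^1$ contributions of the Robba and non-Robba parts over the full disc; Mayer--Vietoris with an annulus overlap reintroduces precisely the boundary index term you are trying to compute. So as written, Step~1 either begs the question or forces you back into the Poineau--Pulita framework anyway.

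A cleaner salvage of your strategy, still different from the paper's bare citation, would be to skip the decomposition entirely and instead use the explicit Robba-type index computation on a disc (as in \cite{robba-indice4} or \cite[Lemma~3.7.5, Lemma~3.7.6]{kedlaya-radii}) directly on $M$, combined with a Mayer--Vietoris over $\DD_{\beta'} \cup A(\alpha,\beta)$ and the vanishing of the annulus index in the $p=0$ case. That reproduces the lemma without the spurious decomposition step, and your ``Main obstacle'' paragraph already gestures at this. But you should be aware that you are then essentially rederiving a fragment of \cite[Theorem~3.5.2]{pulita-poineau5}, which is why the paper simply cites it.
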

\begin{proof}
Apply \cite[Theorem~3.5.2]{pulita-poineau5}.
\end{proof}

\begin{lemma} \label{L:monotonic2a}
For $\rho \in (0,\beta)$, let $x_\rho$ be the generic point of the disc $|t| \leq \rho$.
Then for $i=1,\dots,n$, the function $\rho \mapsto s_1(M,x_\rho) \cdots s_i(M, x_\rho)$ is nonincreasing in $\rho$.
\end{lemma}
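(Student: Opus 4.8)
The plan is to restate the claim in terms of the functions $G_i(M,\cdot)$ and then combine a convexity input with a boundedness input near the center of the disc. Writing $\rho = e^{-r}$, one has $s_1(M,x_\rho)\cdots s_i(M,x_\rho) = e^{-G_i(M,r)}$, so the assertion that this product is nonincreasing in $\rho$ is equivalent to the assertion that $r \mapsto G_i(M,r)$ is nonincreasing on $(-\log\beta,+\infty)$; it suffices to prove the latter for each $i$. The proof will use two facts. The first is \emph{convexity}: $r \mapsto G_i(M,r)$ is convex on $(-\log\beta,+\infty)$. This is the convexity of the partial sums of subsidiary-radius functions established for differential modules on an annulus in \cite[Chapter~11]{kedlaya-book}; since $s_j(M,x_\rho)$ depends only on the Gauss-norm completion at radius $\rho$ and is therefore unchanged by restriction, one may apply that result to the restriction of $M$ to each annulus $\epsilon < |t| < \beta$ and let $\epsilon \to 0^+$ to obtain convexity on all of $(-\log\beta,+\infty)$. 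The second is \emph{boundedness near the center}: for every $R > -\log\beta$, the function $r \mapsto G_i(M,r)$ is bounded above on $[R,+\infty)$. Indeed $M$ is a differential module over $R_{[0,\beta)}$, hence is nonsingular throughout the disc $|t| < \beta$, in particular at $t = 0$; by the $p$-adic Cauchy theorem (Theorem~\ref{T:p-adic Cauchy}) together with transfer at a nonsingular point, the $s_j(M,x_\rho)$ stay bounded away from $0$ as $\rho \to 0^+$, while on any compact subinterval of $(0,e^{-R}]$ continuity of the $s_j$ gives the same bound; summing the $-\log s_j$ yields the claim.

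Given these two facts the conclusion is immediate: the right derivative of the convex function $G_i(M,\cdot)$ is nondecreasing, and it cannot be positive at any point, for then it would be bounded below by a positive constant on a half-line and would force $G_i(M,r) \to +\infty$, contradicting boundedness near $+\infty$. Hence the right derivative is everywhere at most $0$, so $G_i(M,\cdot)$ is nonincreasing, which is the claim.

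The point requiring the most care is the first input, namely pushing convexity of the partial-height functions through the center of the disc rather than merely over annuli. If one wishes to avoid this, there is an index-theoretic route: for $i = n$, the right slope of $G_n(M,\cdot)$ at any $r_0 \in (-\log\beta,+\infty)$ equals the right slope at the boundary of $G_n$ for the restriction of $M$ to $|t| < e^{-r_0}$, which by Lemma~\ref{L:H1 slope} equals $-\dim_K H^1$ of that restriction (the slope being finite by the structure theory, so the cohomology is finite-dimensional), hence is $\le 0$; the case $i < n$ then reduces to the case $i = n$ either by replacing $M$ with $\Lambda^i M$, whose smallest subsidiary radius at $x_\rho$ is precisely $s_1(M,x_\rho)\cdots s_i(M,x_\rho)$, or by splitting off near $r_0$ the rank-$i$ part of $M$ carrying the $i$ smallest subsidiary radii via the decomposition theorem, which is unconditional when $p = 0$. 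In that alternative the bookkeeping needed to make such a decomposition consistent over the whole subdisc is the delicate step.
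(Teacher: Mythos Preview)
Your main argument is correct and takes a genuinely different route from the paper. You deduce monotonicity of $G_i$ from two facts: convexity of $r \mapsto G_i(M,r)$ (standard over annuli from \cite[Chapter~11]{kedlaya-book}, and your reduction via restriction is fine since the subsidiary radii at $x_\rho$ are local invariants) and boundedness as $r \to +\infty$ (which is exactly the Cauchy/transfer statement that $M$ is trivial on some disc $|t| < \epsilon$, giving $s_j(M,x_\rho) \geq \epsilon$ for $\rho < \epsilon$). The paper instead establishes the case $i=n$ via the index formula (Lemma~\ref{L:H1 slope}, which rests on \cite{pulita-poineau5}) and then reduces $i<n$ to $i=n$ by a twisting trick: tensor $M$ with a rank-one module $N_\lambda$ whose constant radius lies strictly between $s_i$ and $s_{i+1}$, so that the twisted module has $g_j(M_\lambda,r) = g_j(M,r)$ for $j \leq i$ and $g_j(M_\lambda,r)$ constant for $j > i$; applying the $i=n$ case to $M_\lambda$ then yields the claim for $G_i(M,r)$ locally. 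Your approach is more self-contained in that it avoids the index input entirely; the paper's approach has the virtue of isolating a mechanism (twisting to separate slope ranges) that recurs elsewhere in the subject.

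Your alternative ``index-theoretic route'' contains a slip worth flagging. Passing to $\Lambda^i M$ does give $s_1(\Lambda^i M,x_\rho) = s_1(M,x_\rho)\cdots s_i(M,x_\rho)$, but this reduces the problem to the $i=1$ case for $\Lambda^i M$, not the $i=n$ case; applying the full-rank case to $\Lambda^i M$ only recovers monotonicity of $G_n(M,r)$, since the product of \emph{all} subsidiary radii of $\Lambda^i M$ is $(s_1\cdots s_n)^{\binom{n-1}{i-1}}$. So the exterior-power reduction does not close the loop. The decomposition alternative you mention could be made to work (and the paper's twisting argument is morally a way of effecting such a decomposition without invoking the full decomposition theorem), but as you note, the bookkeeping there is nontrivial. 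Since your primary convexity-plus-boundedness argument already succeeds, this does not affect the overall correctness.
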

\begin{proof}
The case $i=n$ is immediate from Lemma~\ref{L:H1 slope}.
To deduce the case $i<n$, it suffices to work locally around some
$\rho_0 = e^{-r_0}$, and to check only those values of $i$ for which $g_i(M,r_0) > g_{i+1}(M,r_0)$.
For $\lambda \in K$,
let $M_\lambda$ be the differential module obtained from $M$ by adding $\lambda$ to $D$;
we may also view $M_\lambda$ as the tensor product of $M$ with the rank one differential module $N_\lambda$ on a single generator $\bv$ satisfying $D(\bv) = \lambda \bv$. From the tensor product description, for $j=1,\dots,n$ we have
\begin{equation} \label{eq:tensor product lambda}
g_j(M_\lambda,r) \leq \max\{g_j(M,r), g_1(N_\lambda,r)\}, \quad
g_j(M,r) \leq \max\{g_j(M_\lambda,r), g_1(N_\lambda^\dual,r)\}.
\end{equation}
In addition, by Example~\ref{exa:exponential}, we have 
\[
g_1(N_\lambda, r) = g_1(N_\lambda^\dual, r) = \max\left\{- \log \left| \beta \right|, \frac{1}{p-1} \log p + \log \left| \lambda \right| \right\}
\]
independently of $r$.

Choose an open subinterval $I$ of $(g_{i+1}(M, r_0), g_i(M, r_0))$.
After replacing $K$ with a finite extension (which does not affect $G_j(M,r)$),
we may choose $\lambda$ in such a way that $g_1(N_\lambda, r) = g_1(N_\lambda^\dual, r)$ is equal to a constant value contained in $I$. 
We then have $g_i(M, r) > g_1(N_\lambda,r) > g_{i+1}(M,r)$ for all $r$ in some neighborhood of $r_0$. For such $r$, for $j \leq i$ we apply \eqref{eq:tensor product lambda} to see that $g_j(M_\lambda,r) = g_j(M, r)$.

For $j > i$, \eqref{eq:tensor product lambda} implies $g_j(M_\lambda,r) \leq g_1(N_\lambda,r)$, but we claim that in fact equality must hold. To wit, assume to the contrary that the inequality is strict; then the restriction of $M$ to the disc $|t| < e^{-g_j(M_\lambda,r)}$ would have a submodule isomorphic to $N_\lambda^\dual$.
This would mean that $M$ has a local horizontal section whose exact radius of convergence is equal to $e^{-g_1(N_\lambda,r)}$, which would imply that there exists some value of $k$ for which $g_k(M,r) = g_1(N_\lambda,r)$. However, this contradicts our choice of the interval $I$.

To summarize, for $r$ in a neighborhood of $r_0$ we have 
\[
g_j(M_\lambda,r) = \begin{cases} g_j(M, r) & (j=1,\dots,i) \\
g_j(N_\lambda,r) & (j=i+1,\dots,n).
\end{cases}
\]
We thus deduce the original claim by applying the case $i=n$ to $M_\lambda$.
\end{proof}

We now deduce Theorem~\ref{T:monotonicity}.

\begin{theorem} \label{T:monotonicity appendix2}
For $x,y \in \DD_\beta$ such that $x$ is the generic point of a disc containing $y$, for $i=1,\dots,n$, we have
\[
s_1(M,x) \cdots s_i(M,x) \leq s_1(M,y) \cdots s_i(M,y).
\]
In particular, the conclusion of Theorem~\ref{T:monotonicity} holds.
\end{theorem}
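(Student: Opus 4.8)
The plan is to derive the chain of inequalities from the ``Gauss--point'' case already in hand (Lemma~\ref{L:monotonic2a}) by a recentering manoeuvre, and then to obtain Theorem~\ref{T:monotonicity} by localizing to a disc. Since the $s_i(M,\cdot)$ are unchanged under extension of the ground field (\cite[Lemma~4.3.3]{kedlaya-radii}), we may assume $K$ is algebraically closed. The point $x$, being the generic point of a disc of positive radius, is of type $2$ or $3$, and that disc has the form $\{|t-c|\le\rho\}$ with $c\in K$, $\rho\in(0,\beta)$. Because $|c|<\beta$, the substitution $t\mapsto t-c$ carries $\DD_\beta$ onto itself and $M$ to an isomorphic differential module over $R_{[0,\beta)}$, so after this change of coordinate we may assume $c=0$, that is, $x=x_\rho$ and $y\in\{|t|\le\rho\}$.

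If $y$ is also of type $2$ or $3$, it is the generic point of a subdisc $\{|t-c'|\le\rho'\}$ with $c'\in K$, $|c'|\le\rho$ and $\rho'\in(0,\rho]$; since $\{|t-c'|\le\rho\}=\{|t|\le\rho\}$, the further substitution $t\mapsto t-c'$ (again harmless, as $|c'|<\beta$) makes $x=x_\rho$ and $y=x_{\rho'}$, and the asserted inequality for each $i$ is exactly the statement of Lemma~\ref{L:monotonic2a} evaluated at $\rho'\le\rho$. If instead $y$ is of type $1$ or $4$, write $y$ as the intersection of a nested sequence of closed subdiscs of $\{|t|\le\rho\}$ with generic points $y_j$; applying the case just treated to the pairs $(x,y_j)$ gives $s_1(M,x)\cdots s_i(M,x)\le s_1(M,y_j)\cdots s_i(M,y_j)$, and letting $j\to\infty$, using the continuity of each $s_i(M,\cdot)$ on $\DD_\beta$ including at points of types $1$ and $4$ (cf.\ Theorem~\ref{T:continuous2}; see \cite{kedlaya-radii}), yields the inequality at $y$.

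For the deduction of Theorem~\ref{T:monotonicity}, fix $x\in U^{\an}$ and a branch $\vec t$ of $X$ at $x$ not pointing along $\Gamma_{X,Z}$. If $x$ is of type $1$ or $4$ there is nothing to prove, since then $\calN$ has vanishing slope along every branch at $x$ (Remark~\ref{R:branches slopes}); so assume $x$ is of type $2$ or $3$. Then $\vec t$ points into an open disc $D\subseteq U^{\an}$ having $x$ as its generic point, and choosing a coordinate identifying $D$ with $\{|t|<\beta\}$ exhibits $(M,D):=(\calE,\nabla)|_D$ as in Hypothesis~\ref{H:convexity} (recall $p=0$ throughout this situation). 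Applying Theorem~\ref{T:monotonicity appendix2} to pairs of points on the edge of a skeleton through $D$ in the direction $\vec t$, ordered by proximity to $x$, shows that $-\log\bigl(s_1(M,\cdot)\cdots s_i(M,\cdot)\bigr)$ is nonincreasing as one moves away from $x$ along that edge; since along the edge this function differs from $h_i(\calN)$ only by a constant coming from the normalization of $D$, the function $h_i(\calN)$ has the same property. Taking the one-sided derivative at $x$ (legitimate since $\calN$ is affine on the edge by Theorem~\ref{T:continuous2}) gives $\partial_{\vec t}(h_i(\calN))\le0$ for every $i$, i.e.\ $\partial_{\vec t}(\calN)\le0$, as required.

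The substantive content lies upstream, in Lemma~\ref{L:monotonic2a}: the reduction of an arbitrary partial product to the top one $s_1\cdots s_n$ via the tensor perturbation by the rank-one modules $N_\lambda$, together with the identification of the endpoint slope of $G_n(M,r)$ with $-\dim_K H^1(M)$ (Lemma~\ref{L:H1 slope}, i.e.\ the Poineau--Pulita local index formula). Granting that, the only genuinely nontrivial external input in the present argument is the continuity of the subsidiary radii at points of type $4$ (flagged in Remark~\ref{R:continuous2} as requiring the dedicated methods of \cite{kedlaya-radii}); the remaining steps --- the two coordinate translations, the passage to the limit, and the bookkeeping identifying $\partial_{\vec t}(\calN)$ with the endpoint behaviour of $G_i(M,r)$ --- are routine, and the matching of normalizations is the most error-prone among them.
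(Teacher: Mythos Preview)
Your proof is correct and follows the paper's approach. The paper's own proof is a single sentence (``This follows from Lemma~\ref{L:monotonic2a} thanks to the invariance of the $s_i$ under base extension''), and you have supplied exactly the details that sentence leaves implicit: pass to a larger field, recenter so that $x$ becomes a Gauss point $x_\rho$, and invoke Lemma~\ref{L:monotonic2a}.

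One small remark: your treatment of type~4 points via continuity (appealing to the delicate part of Theorem~\ref{T:continuous2}) is valid, but there is an alternative more in the spirit of the paper's one-liner. Rather than extending only to an algebraic closure and then approximating, extend $K$ all the way to $\calH(y)$; then $y$ lifts to an $L$-rational (type~1) point, which after recentering becomes $0$, and $x$ becomes $x_\rho$. One then compares $x_\rho$ with $x_{\rho'}$ for $\rho'$ small enough that $M$ is trivial on $|t|<\rho'$ (which is immediate from the $p$-adic Cauchy theorem), so that $s_i(M,x_{\rho'}) = s_i(M,0)$. This sidesteps the type~4 continuity input entirely. Either route is fine; yours is more explicit about what is being used.
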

\begin{proof}
This follows from Lemma~\ref{L:monotonic2a} thanks to the invariance of the $s_i$
under base extension.
\end{proof}

We next proceed towards Theorem~\ref{T:virtual local index}(c).
\begin{defn}
Define the convergence polygon $\calN$ of $M$ as in Definition~\ref{D:convergence polygon general}, using the same disc at every point. Define the modified convergence polygon $\calN'$ using maximal discs not containing 0.
\end{defn}

\begin{lemma} \label{L:convexity}
Assume $p=0$. For $i=1,\dots,n$, for $x \in \DD_\beta$, 
we have $\Delta h_i(\calN)_x \geq 0$.
\end{lemma}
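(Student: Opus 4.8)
The plan: By \cite{kedlaya-radii}, $\calN$ factors through the retraction of $\DD_\beta$ onto the radial segment $\{x_\rho\}_{0 \le \rho < \beta}$ and is piecewise affine with integer slopes along it; hence $\Delta h_i(\calN)_x = 0$ unless $x = x_{\rho_0}$ for some $\rho_0 \in (0,\beta)$, and for such $x$ the quantity $\Delta h_i(\calN)_{x_{\rho_0}}$ equals the convexity defect (right slope minus left slope) at $r_0 = -\log\rho_0$ of the function $r \mapsto h_i(\calN)(x_{e^{-r}})$, which differs from $G_i(M, r)$ by an additive constant. So it suffices to prove, for every differential module $M$ over a ring $R_{[0,\beta)}$ and every $i$, that $\Delta h_i(\calN)_{x_{\rho_0}} \ge 0$ for all $\rho_0$.

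The key case is $i = n$. Here the convexity defect of $G_n(M,\cdot)$ at $r_0$ can be computed via the local index result behind Lemma~\ref{L:H1 slope} (namely \cite[Theorem~3.5.2]{pulita-poineau5}), applied to suitable restrictions of $M$ to a slightly larger disc and to a complementary annulus, together with a bookkeeping of which local solutions become submaximal under restriction and a Mayer--Vietoris identity; it works out to $-\chi_{\dR}(A, M|_A)$ for $A$ a thin open annulus around $x_{\rho_0}$. This is $\ge 0$ because the de Rham index of a differential module on an annulus is $\le 0$, by Robba's index theorem (\cite{robba-indice4}; see also \cite[Lemma~3.7.5]{kedlaya-radii}).

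For $i < n$ I split on whether the slopes separate at $r_0$. If $g_i(M, r_0) > g_{i+1}(M, r_0)$, I argue as in the proof of Lemma~\ref{L:monotonic2a}: after a finite extension of $K$ (harmless for both the $G_j$ and for convexity), I tensor $M$ with a rank-one module $N_\lambda$ whose constant slope $g_1(N_\lambda, \cdot)$ lies strictly between $g_{i+1}(M, r_0)$ and $g_i(M, r_0)$; then near $r_0$ one has $G_n(M_\lambda, r) = G_i(M, r) + (n-i)\,g_1(N_\lambda, r)$ with the last term constant, so the case $i = n$ applied to $M_\lambda$ yields $\Delta h_i(\calN)_{x_{\rho_0}} \ge 0$. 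If instead $g_i(M, r_0) = g_{i+1}(M, r_0)$, then $r \mapsto s_{i+1}(\calN(x_{e^{-r}})) - s_i(\calN(x_{e^{-r}}))$ is $\le 0$ by concavity of the convergence polygon and vanishes at $r_0$, so it has a local maximum, hence a concave corner, there; taking the Laplacian at $x_{\rho_0}$ gives $\Delta h_{i+1}(\calN)_{x_{\rho_0}} - 2\Delta h_i(\calN)_{x_{\rho_0}} + \Delta h_{i-1}(\calN)_{x_{\rho_0}} \le 0$. Thus $i \mapsto \Delta h_i(\calN)_{x_{\rho_0}}$ is concave across each maximal block of consecutive indices on which $\calN(x_{\rho_0})$ has constant slope, and since it is $\ge 0$ at both ends of every such block (those indices being $0$, or $n$, or indices where the slopes separate, all already handled), it is $\ge 0$ throughout.

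The main obstacle is the $i = n$ step: not any individual estimate, but the precise matching of the two one-sided slopes of $G_n(M,\cdot)$ at an interior point with de Rham Euler characteristics of a sub-disc and a complementary annulus — essentially re-running the Poineau--Pulita local index computation in the present radial, unramified, residue-characteristic-zero situation — with the nonpositivity of the annulus index (Robba) forcing the sign. Granting that, the reduction to $i = n$ and the treatment of repeated slopes are formal.
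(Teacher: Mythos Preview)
Your reduction from $i < n$ to $i = n$ is sound (both the twist by $N_\lambda$ when the slopes separate, and the concavity argument across blocks of equal slopes). The gap is in the $i = n$ step, and it stems from your opening claim.

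It is \emph{not} true that $\calN$ factors through the retraction of $\DD_\beta$ onto the radial segment $\{x_\rho\}$. Theorem~\ref{T:continuous2} only guarantees that $\calN$ factors through retraction onto \emph{some} strict skeleton, which for a general module on a disc will have off-radial edges. Concretely, at $x_{\rho_0}$ there are infinitely many branches (one for each residue disc), and along finitely many of these $\partial_{\vec{t}}(\calN)$ can be nonzero. Thus $\Delta h_n(\calN)_{x_{\rho_0}}$ is \emph{not} the convexity defect of $G_n(M,\cdot)$ at $r_0$: it is that radial defect \emph{plus} the contributions from all the off-radial branches. Your annulus computation runs into the same problem from the other side: a thin open annulus $A$ around $x_{\rho_0}$ contains \emph{all} of the residue discs at $x_{\rho_0}$ (except the one through $0$), so $-\chi_{\dR}(A,M|_A)$ picks up every off-radial Laplacian contribution inside those discs, not just the value at $x_{\rho_0}$. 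Knowing that this sum is $\ge 0$ does not yield $\Delta h_n(\calN)_{x_{\rho_0}} \ge 0$ unless you already know the off-radial terms are $\ge 0$, which is exactly what you are trying to prove.

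The paper's proof confronts these off-radial branches directly. After reducing to $x = x_1$ and $i=n$, it picks a finite set $W$ of rational points indexing the off-radial branches $\vec{t}_w$ along which $\partial_{\vec{t}}(\calN) \neq 0$, identifies each such branch-slope with $-\dim_K H^1(M_{w,[0,\gamma)})$ via Lemma~\ref{L:H1 slope} applied to the sub-disc at $w$, and identifies the outward radial slope with $-\dim_K H^1(M_{[0,\delta)})$. The inequality $\Delta h_n(\calN)_{x_1} \ge 0$ then becomes $\dim_K H^1(M_{[0,\delta)}) \ge \sum_{w \in W} \dim_K H^1(M_{w,[0,\gamma)})$, which follows from surjectivity of the restriction map on $H^1$; this in turn is proved by a density argument plus the open mapping theorem, using that each $H^1(M_{w,[0,\gamma)})$ is finite-dimensional (here is where $p=0$ enters, via Theorem~\ref{T:local index formula}). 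The essential idea you are missing is this passage from the big disc to the collection of off-radial sub-discs, rather than to an annulus.
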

\begin{proof}
By base extension, we may reduce to the case $x = x_1$;
we may also assume that the norm on $K$ is nontrivial.
As in the proof of Lemma~\ref{L:monotonic2a}, we may further reduce to the case $i=n$.
We may further reduce to the case where $H^0(M_{[0,\delta)}) = 0$ for all $\delta \in (1,\beta)$.

Choose a nonempty set $W$ of $K$-rational points of $\DD_\beta$ with the following properties.
\begin{enumerate}
\item[(a)]
For each $w \in W$, $U_{w,1}$ is a branch of $\PP^1_K$ at $x_1$, which we also denote by
$\vec{t}_w$.
\item[(b)]
The branches $\vec{t}_w$ for $w \in W$ are pairwise distinct.
\item[(c)]
Let $\vec{t}_\infty$ be the branch of $\PP^1_K$ at $x$ in the direction of $\infty$.
Then for all branches $\vec{t}$ of $\PP^1_K$ at $x_1$, 
we have $\partial_{\vec{t}}(\calN) = 0$
unless $\vec{t} = \vec{t}_\infty$ or $\vec{t} = \vec{t}_w$ for some $w \in W$.
\end{enumerate}
Let $R_{w,I}$ be the ring of rigid analytic functions on the space $|z-w| \in I$,
and put $M_{w,I}= M \otimes_{R_{[0,\beta)}} R_{w,I}$. Then
by Lemma~\ref{L:H1 slope}, to prove the desired result, it suffices to check that
for any $\gamma \in (0,1)$, $\delta \in (1,\beta)$
sufficiently close to 1,
\[
\dim_K H^1(M_{[0,\delta)}) \geq \sum_{w \in W} \dim_K H^1(M_{w,[0,\gamma)}).
\]
It would hence also suffice to prove surjectivity of the map
\begin{equation} \label{eq:h1 surjective}
H^1(M_{[0,\delta)}) \to \bigoplus_{w \in W} H^1(M_{w,[0,\gamma)}).
\end{equation}
Since $p=0$, we may invoke Theorem~\ref{T:local index formula} to see that $H^1(M_{w,[0,\gamma)})$ is a finite-dimensional $K$-vector space, and so is complete with respect to its natural topology as a $K$-vector space (i.e., the one induced by the supremum norm with respect to some basis). Since the map $M_{w,[0,\gamma)} \to H^1(M_{w,[0,\gamma)})$ is a $K$-linear surjection from a Fr\'echet space over $K$ to a Banach space over $K$, the Banach open mapping theorem implies that this map is a quotient map of topological spaces. In particular, since the map
\[
M_{[0, \delta)} \to \bigoplus_{w \in W} M_{w,[0,\gamma)}
\]
has dense image, so then does
\eqref{eq:h1 surjective}, proving the claim.
\end{proof}

\begin{theorem} \label{T:convexity}
The conclusion of Theorem~\ref{T:virtual local index}(c) holds.
\end{theorem}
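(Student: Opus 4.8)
The plan is to treat the two clauses of the definition of $\chi_x(\calE)$ separately, so that all the genuinely new content is concentrated in Lemma~\ref{L:convexity}.

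If $x \in \Gamma_{X,Z}$, there is nothing to prove here: the inequality $\chi_x(\calE) \le 0$ is exactly Theorem~\ref{T:virtual local index}(b), whose proof (via \cite[Proposition~6.2.11]{pulita-poineau3} and \cite[Theorem~5.3.6]{kedlaya-radii}) makes no use of the present appendix. We are thus reduced to the case $x \in U^{\an} \setminus \Gamma_{X,Z}$, where $\chi_x(\calE) = -(\Delta h(\calN))_x$ by definition; the goal is to show $(\Delta h(\calN))_x \ge 0$.

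The first step is a geometric identification of the disc $U_x$ of Definition~\ref{D:convergence polygon general}. Since $K$ is algebraically closed, every open disc contained in $U^{\an}$ is disjoint from $\Gamma_{X,Z}$ (Definition~\ref{D:minimal strict skeleton}); hence $U_x$, being a union of open discs each containing $x$, is a connected subset of $U^{\an} \setminus \Gamma_{X,Z}$. As the connected component $D$ of $U^{\an} \setminus \Gamma_{X,Z}$ containing $x$ is itself an open disc and is a neighborhood of $x$ in $U^{\an}$, the same reasoning forces $U_x = D$, and likewise $U_y = D$ for every $y \in D$. In particular $D \cap Z = \emptyset$, so $(\calE|_D, \nabla)$ is a regular differential module on the disc $D$, and the restriction of $\calN$ to $D$ coincides with the convergence polygon attached to $(\calE|_D,\nabla)$ in the sense of this appendix, since every point of $D$ uses the same disc $D$ to measure convergence. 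Because $U_x = D$ is an open neighborhood of $x$, the branches of $X$ at $x$ are exactly the branches of $D$ at $x$, so $(\Delta h(\calN))_x$ is computed entirely inside $D$.

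Identifying $D$ with a disc $\DD_\beta$, Lemma~\ref{L:convexity} applied with $i = n$ then yields $(\Delta h(\calN))_x = (\Delta h_n(\calN))_x \ge 0$, whence $\chi_x(\calE) \le 0$, as desired. The only point that must be handled with some care — and really the only subtlety, all the analytic substance having been isolated in Lemma~\ref{L:convexity} — is the verification that $\calN|_D$ is the convergence polygon underlying Lemma~\ref{L:convexity} and not the modified polygon $\calN'$; this is a matter of normalization, and becomes immediate once one knows $U_y = D$ for all $y \in D$ and recalls that $\Delta \calN$ is supported at points of type $2$.
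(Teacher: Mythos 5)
Your proof is correct and follows the same route as the paper's: dispose of $x \in \Gamma_{X,Z}$ via part~(b), and for $x \notin \Gamma_{X,Z}$ reduce to Lemma~\ref{L:convexity}. The only difference is that you spell out the reduction — identifying $U_y$ with the fixed component $D$ of $U^{\an} \setminus \Gamma_{X,Z}$ for every $y \in D$, so that $\calN|_D$ really is the unprimed convergence polygon of the appendix on a disc $\DD_\beta$ — which the paper leaves implicit.
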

\begin{proof}
By Theorem~\ref{T:virtual local index}(b), we may assume $x \notin \Gamma_{X,Z}$;
the claim thus reduces to Lemma~\ref{L:convexity}.
\end{proof}

\begin{remark}
In the proof of Lemma~\ref{L:convexity}, to deduce the finite-dimensionality of 
$H^1(M_{w,[0,\gamma)})$, one may replace the invocation of Theorem~\ref{T:local index formula} with the following argument. For $\delta_1 \in (0, \gamma)$, $\delta_2 \in (\delta_1, \gamma)$ sufficiently large, by \cite[Lemma~3.7.6]{kedlaya-radii} we have
\[
\dim_K H^0(M_{w,(\delta_1, \gamma)}) =
\dim_K H^1(M_{w,(\delta_1, \gamma)}), \quad
\dim_K H^0(M_{w,(\delta_1, \delta_2)}) =
\dim_K H^1(M_{w,(\delta_1, \delta_2)}).
\]
By this calculation plus Mayer-Vietoris, the map $H^1(M_{w,[0,\gamma)}) \to H^1(M_{w,[0,\delta_2)})$
has finite-dimensional kernel and cokernel. 
Since $R_{w,[0,\gamma)} \to R_{w,[0,\delta_2)}$ is a compact map of topological $K$-vector spaces, the same is true of $M_{w,[0,\gamma)} \to M_{w,[0,\delta_2)}$; we may thus apply the Schwartz-Cartan-Serre lemma \cite[Satz~1.2]{kiehl} to conclude.
\end{remark}

\begin{remark} \label{R:convexity positive characteristic}
At this point, it is natural to consider what happens when $p>0$.
If we also add suitable hypotheses on $p$-adic non-Liouville exponents, then all of the preceding statements remain true
(as in Remark~\ref{R:finite morphism liouville}).
Absent such hypotheses, we cannot rely on finite-dimensionality of cohomology groups, but it may nonetheless be possible to adapt the proof of Theorem~\ref{T:convexity} to the case $p>0$
by establishing a relative version of Lemma~\ref{L:H1 slope}. To be precise, in the notation of the proof of Lemma~\ref{L:convexity}, one might hope to prove that
\[
H^1(M_{[0,\delta)} / \bigoplus_{w \in W} M_{w,[0,\gamma)}) = 0
\]
and to relate this vanishing directly to the Laplacian, bypassing the potential failure of finite-dimensionality for the individual cohomology groups.
\end{remark}

\section{Thematic bibliography}
\label{sec:thematic bib}

As promised in the introduction, we include an expansive but unannotated list of references related to key topics in the paper. For those topics discussed in \cite{kedlaya-book}, the chapter endnotes therein may be consulted for additional context.

\begin{itemize}

\item
Underlying structure of Berkovich spaces: 
\cite{bpr2}, \cite{bpr}, 
\cite[Chapter~1]{baker-rumely},
\cite{berkovich-contractible},
\cite{berkovich-contractible2},
\cite{cd},
\cite{ducros1}, \cite{ducros2}, \cite{ducros},
\cite{grw},
\cite{hrushovski-loeser},
\cite{payne},
\cite{poineau-angelique},
\cite{tyomkin}.
See also the survey \cite{werner-pr} in this volume.

\item
Potential theory for Berkovich curves: \cite[chapter by Baker]{aws}, \cite{baker-rumely}, \cite{thuillier}.

\item
Formal structure of singular connections:  
\cite[Chapter~4]{dgs},
\cite{gerard-levelt},
\cite[\S 11]{katz-turrittin},
\cite[Chapter~7]{kedlaya-book},
\cite{kedlaya-goodformal1}, \cite{kedlaya-goodformal2},
\cite{levelt}, 
\cite{levelt-vandenessen}, 
\cite{malgrange}, \cite{malgrange2},
\cite{mochizuki1}, \cite{mochizuki2}, 
\cite[Chapter~3]{svdp},
\cite{turrittin}.

\item
Convergence of solutions of $p$-adic differential equations:
\cite{baldassarri-singular},
\cite{baldassarri},
\cite{baldassarri-divizio},
\cite{baldassarri-kedlaya},
\cite{christol-book},
\cite{christol},
\cite{christol-dwork-couronnes},
\cite{christol-remmal},
\cite{clark}, 
\cite{dwork-robba-natural},
\cite[Chapters~9--11]{kedlaya-book},
\cite{kedlaya-xiao},
\cite{lutz},
\cite{pulita-poineau2}, \cite{pulita-poineau3}, 
\cite{pons1}, \cite{pons}, \cite{pulita-poineau}, 
\cite{young}.
For a retrospective circa 2000, see also  \cite{christol-retro}.

\item
Transfer principles and effective convergence bounds:
\cite{christol-transfer},
\cite{christol-dwork-effective},
\cite{dwork-robba},
\cite{dwork-robba-effective},
\cite[Chapters~9, 13, 18]{kedlaya-book}.

\item
Logarithmic growth of $p$-adic solutions:
\cite{andre-dwork-conj},
\cite{chiarellotto-tsuzuki1},
\cite{chiarellotto-tsuzuki2},
\cite{christol-book},
\cite{dwork-pde2}, \cite{dwork-pde3},
\cite[Chapter~18]{kedlaya-book},
\cite{manjra},
\cite{ohkubo1}, \cite{ohkubo2}.

\item
Stokes phenomena for complex connections:
\cite{loday-richaud}, \cite{loday-richaud2}, \cite{majima}, 
\cite{mochizuki3}, \cite{sabbah}, \cite{sabbah2}, \cite[Chapters 7--9]{svdp}, \cite{varadarajan}.

\item
Index formulas for nonarchimedean differential equations:
\cite{christol-book2},
\cite{cm1}, \cite{cm2}, \cite{cm3}, \cite{cm4}, 
\cite{kedlaya-overview}, 
\cite{kedlaya-weil2},
\cite{matsuda},
\cite{robba-index1}, \cite{robba-index2}, \cite{robba-index3}, \cite{robba-indice4},
\cite{pulita-poineau5}, \cite{tsuzuki-index}, \cite{young}.
See also the thematic bibliography of \cite{robba-cohom},
and \cite{cm-fez} for a survey of \cite{cm1,cm2,cm3,cm4}.

\item
Comparison between algebraic and complex analytic cohomology of connections:
\cite{andre-baldassarri}, 
\cite{atiyah-hodge}, \cite{deligne-eq}, \cite{grothendieck-derham}, 
\cite{mebkhout-comparison}.

\item
Comparison between algebraic and nonarchimedean analytic cohomology of connections:
\cite{andre-comparison},
\cite{andre-baldassarri},
\cite{baldassarri-comparison}, \cite{baldassarri-comparison2}, 
\cite{chiarellotto-comparison}, \cite{chiarellotto-comparison2}, \cite[chapter by Kedlaya]{aws}, \cite{kiehl-comparison}.

\item
Decomposition theorems for nonarchimedean connections:
\cite{christol-book}, 
\cite{christol-book2},
\cite{cm1}, \cite{cm2}, \cite{cm3}, \cite{cm4}, \cite{dwork-robba}, \cite[Chapter~12]{kedlaya-book},
\cite{kedlaya-radii}, \cite{pulita-poineau4}, \cite{robba-index1}, \cite{robba-hensel}.

\item
Monodromy for $p$-adic connections (Crew's conjecture):
\cite{andre-monodromy}, 
\cite{crew},
\cite{kedlaya-monodromy},
\cite{kedlaya-semistable1},
\cite{kedlaya-semistable2},
\cite{kedlaya-semistable3},
\cite[Chapters~20, 21]{kedlaya-book},
\cite{kedlaya-semistable4},
\cite{kedlaya-radii},
\cite{mebkhout-monodromy},
\cite{tsuzuki-monodromy},
\cite{tsuzuki-monodromy2}.

\item
$p$-adic Liouville numbers and $p$-adic exponents:
\cite{christol-book2},
\cite{cm1}, \cite{cm2}, \cite{cm3}, \cite{cm4}, \cite{clark},
\cite{dwork-exponents}, 
\cite[Chapter~13]{kedlaya-book}, \cite{kedlaya-radii}.

\item
Ramification of maps of Berkovich curves:
\cite{temkin1}, \cite{faber1}, \cite{faber2}, \cite{huber}, 
\cite[Chapter~19]{kedlaya-book},  \cite{temkin2}.

\item
Measures of ramification and $p$-adic differential equations:
\cite{baldassarri-rolle}, \cite{chiarellotto-pulita},
\cite{kedlaya-overview}, 
\cite{kedlaya-swan1}, \cite[Chapter~19]{kedlaya-book}, \cite{kedlaya-swan2}, 
\cite{marmora}, \cite{matsuda1}, \cite{matsuda}, \cite{matsuda-conj}, \cite{ohkubo3}, \cite{tsuzuki-index}, \cite{xiao-ramif1}, \cite{xiao-ramif2}, \cite{xiao-refined}.

\item
Kummer theory in mixed characteristic (Kummer-Artin-Schreier-Witt theory):
\cite{green-matignon},
\cite{matsuda1},
\cite{mezard-romagny-tossici},
\cite{oss}, \cite{pulita-rank1},
\cite{sekiguchi-suwa}, \cite{sekiguchi-suwa1} (unpublished; see
\cite{mezard-romagny-tossici} instead), \cite{sekiguchi-suwa2}, \cite{tossici},
\cite{waterhouse}.

\item
Oort lifting problem: 
\cite{bertin},
\cite{bouw-wewers},
\cite{brewis-wewers}, 
\cite{cgh1}, \cite{cgh},
\cite{corry},
\cite{garuti},
\cite{green-matignon},
\cite{green-matignon2},
\cite[\S 9]{hops},
\cite{obus12},
\cite{obus-gen},
\cite{obus-wewers},
\cite{oort},
\cite{oss},
\cite{pagot},
\cite{pop}, \cite{saidi}.
See also the lecture notes \cite{bouw-wewers-aws}, the introductions to \cite{obus-wewers, pop}, and the PhD thesis \cite{turchetti}.

\end{itemize}


\begin{thebibliography}{999}

\bibitem{andre-monodromy}
Y. Andr\'e, Filtrations de type Hasse-Arf et monodromie $p$-adique, \textit{Invent. Math.}
\textbf{148} (2002), 285--317.

\bibitem{andre-comparison}
Y. Andr\'e, Comparison theorems between algebraic and analytic de Rham cohomology,
\textit{J. Th\'eor. Nombres Bordeaux} \textbf{16} (2004), 335--355.

\bibitem{andre-dwork-conj}
Y. Andr\'e, Dwork's conjecture on the logarithmic growth of solutions of $p$-adic differential equations, \textit{Compos. Math.} \textbf{144} (2008), 484--494.

\bibitem{andre-baldassarri}
Y. Andr\'e and F. Baldassarri, \textit{De Rham Cohomology of Differential Modules on Algebraic Varieties}, Progress in Math.\ 189, Birkh\"auser, Basel, 2001.

\bibitem{atiyah-hodge}
M.F. Atiyah and W.V.D. Hodge, Integrals of the second kind on an algebraic variety,
\textit{Ann. Math.} \textbf{62} (1955), 56--91.

\bibitem{aws}
M. Baker et al., \textit{$p$-adic Geometry: Lectures from the 2007 Arizona Winter School}, Amer. Math. Soc., 2008.

\bibitem{bpr2}
M. Bayer, S. Payne, and J. Rabinoff, Nonarchimedean geometry, tropicalization, and metrics on curves, arXiv:1104.0320v2 (2012).

\bibitem{bpr}
M. Baker, S. Payne, and J. Rabinoff,  On the structure of non-Archimedean analytic
curves, in \textit{Tropical and Non-Archimedean Geometry},
Contemp. Math., 605, Amer. Math. Soc., Providence, RI, 2013, 93--121.

\bibitem{baker-rumely}
M. Baker and R. Rumely,
\textit{Potential Theory and Dynamics on the
Berkovich Projective Line}, AMS Surveys and Monographs 159, Amer. Math. Soc., Providence, 2010.

\bibitem{baldassarri-singular}
F. Baldassarri, 
Differential modules and singular points of $p$-adic differential equations,
\textit{Adv. Math.} \textbf{44} (1982), 155--179.

\bibitem{baldassarri-comparison}
F. Baldassarri,
Comparaison entre la cohomologie alg\'ebrique
et la cohomologie $p$-adique rigide \`a coefficients dans un module
diff\'erentiel, I. Cas des courbes, \textit{Inv. Math.} \textbf{87} (1987), 83--99. 

\bibitem{baldassarri-comparison2}
F. Baldassarri,
Comparaison entre la cohomologie alg\'ebrique
et la cohomologie $p$-adique rigide \`a coefficients dans un module
diff\'erentiel, II. Cas des singularit\'es r\'eguli\`eres \`a plusieures variables,
\textit{Math. An.} \textbf{280} (1988), 417--439.

\bibitem{baldassarri}
F. Baldassarri, Continuity of the radius of convergence of differential equations on $p$-adic analytic curves,
\textit{Invent. Math.} \textbf{182} (2010), 513--584.

\bibitem{baldassarri-rolle}
F. Baldassarri, Radius of convergence of $p$-adic connections and the $p$-adic Rolle theorem,
\textit{Milan J. Math.} \textbf{81} (2013), 397--419.

\bibitem{baldassarri-divizio}
F. Baldassarri and L. Di Vizio, Continuity of the radius of convergence of $p$-adic differential equations on Berkovich spaces, arXiv:07092008v1 (2007).

\bibitem{baldassarri-kedlaya}
F. Baldassarri and K.S. Kedlaya, Harmonic functions attached to meromorphic
connections on non-archimedean curves,
in preparation.

\bibitem{berkovich2}
V. Berkovich, 
\'Etale cohomology for non-Archimedean analytic spaces,
\textit{Publ. Math. IH\'ES} \textbf{78} (1993), 5--161.

\bibitem{berkovich-contractible}
V. Berkovich, Smooth $p$-adic analytic spaces are locally contractible,
\textit{Invent. Math.} \textbf{137} (1999), 1--84.

\bibitem{berkovich-contractible2}
V. Berkovich, Smooth $p$-adic analytic spaces are locally contractible, II,
in \textit{Geometric Aspects of Dwork Theory, Vol. I}, de Gruyter, Berlin, 2004, 293--370.

\bibitem{bertin}
J. Bertin, Obstructions locales, au rel\`evement de rev\^{e}tements galoisiens de courbes lisses, \textit{C.R. Acad.\ Sci.\ Paris S\'er.\ I Math.} \textbf{326} (1998), 55--58.

\bibitem{beukers}
F. Beukers, On Dwork's accessory parameter problem,
\textit{Math. Z.} \textbf{241} (2002), 425--444.

\bibitem{bouw-wewers}
I.I. Bouw and S. Wewers, The local lifting problem for dihedral groups,
\textit{Duke Math. J.} \textbf{134} (2006), 421--452.

\bibitem{bouw-wewers-aws}
I. Bouw and S. Wewers, Group actions on curves and the lifting problem,
course notes for the 2012 Arizona Winter School, available at
\url{http://math.arizona.edu/~swc/aws/2012/}.

\bibitem{brewis-wewers}
L.H. Brewis and S. Wewers,
Artin characters, Hurwitz trees, and the lifting problem,
\textit{Math. Ann.} \textbf{345} (2009), 711--730.

\bibitem{cd}
A. Chambert-Loir and A. Ducros, Formes diff\'erentielles r\'eelles et courants sur les espaces de Berkovich, 
preprint available at \url{http://www.math.jussieu.fr/~ducros/}.

\bibitem{chiarellotto-comparison}
B. Chiarellotto, Sur le th\'eor\`eme de comparaison entre cohomologies de de Rham alg\'ebrique et $p$-adique rigide, \textit{Ann. Inst. Fourier (Grenoble)} 
\textbf{38} (1988), 1--15.

\bibitem{chiarellotto-comparison2}
B. Chiarellotto, A comparison theorem in $\mathfrak{p}$-adic cohomology,
\textit{Ann. Mat. Pura Appl.} \textbf{153} (1988), 115--131.

\bibitem{chiarellotto-pulita}
B. Chiarellotto and A. Pulita, Arithmetic and differential Swan conductors of rank 1 representations with finite local monodromy, \textit{Amer. J. Math.} \textbf{131} (2009), 1743--1794.

\bibitem{chiarellotto-tsuzuki1}
B. Chiarellotto and N. Tsuzuki, Logarithmic growth and Frobenius filtrations for solutions
of $p$-adic differential equations, \textit{J. Inst. Math. Jussieu} \textbf{8} (2009), 465--505.

\bibitem{chiarellotto-tsuzuki2}
B. Chiarellotto and N. Tsuzuki, Log-growth filtration and Frobenius slope filtration of
$F$-isocrystals at the generic and special points, \textit{Doc. Math.} \textbf{16} (2011), 33--69.

\bibitem{cgh1}
T. Chinburg, R. Guralnick, and D. Harbater,
Oort groups and local lifting problems,
\textit{Compos. Math.} \textbf{144} (2008), 849--866.

\bibitem{cgh}
T. Chinburg, R. Guralnick, and D. Harbater,
The local lifting problem for actions of finite groups on curves,
\textit{Ann. Scient. \'Ec. Norm. Sup.} \textbf{44} (2011), 537--605.

\bibitem{christol-book}
G. Christol, \textit{Modules Diff\'erentielles et \'Equations Diff\'erentielles
$p$-adiques}, Queen's University, Kingston, 1983.

\bibitem{christol-transfer}
G. Christol, Un th\'eor\`eme de transfert pour les disques singuliers r\'eguliers,
Cohomologie $p$-adique, \textit{Ast\'erisque} \textbf{119--120} (1984), 151--168.

\bibitem{christol-retro}
G. Christol, Thirty years later, in \textit{Geometric Aspects of Dwork Theory, Vol. I},
de Gruyter, Berlin, 2004, 419--436.

\bibitem{christol}
G. Christol,
The radius of convergence function for first order differential equations, 
in \textit{Advances in non-Archimedean analysis}, 
Contemp. Math. 551, 
Amer. Math. Soc., Providence, RI, 2011, 71--89.

\bibitem{christol-book2}
G. Christol, \textit{Le Th\'eor\`eme de Turrittin $p$-adique},
in preparation; version of 11 Jun 2011 downloaded from
\url{http://www.math.jussieu.fr/~christol}.

\bibitem{christol-dwork-effective}
G. Christol and B. Dwork, Effective $p$-adic bounds at regular singular points,
\textit{Duke Math. J.} \textbf{62} (1991), 689--720.

\bibitem{christol-dwork-couronnes}
G. Christol and B. Dwork, Modules differentiels sur les couronnes,
\textit{Ann. Inst. Fourier (Grenoble)} \textbf{44} (1994), 663--701.

\bibitem{cm1}
G. Christol and Z. Mebkhout, Sur le th\'eor\`eme de l'indice des \'equations diff\'erentielles,
\textit{Annales Inst. Fourier} \textbf{43} (1993), 1545--1574.

\bibitem{cm2}
G. Christol and Z. Mebkhout, Sur le th\'eor\`eme de l'indice des \'equations diff\'erentielles, II,
\textit{Annals of Math.} \textbf{146} (1997), 345--410.

\bibitem{cm3}
G. Christol and Z. Mebkhout, Sur le th\'eor\`eme de l'indice des \'equations diff\'erentielles, III,
\textit{Annals of Math.} \textbf{151} (2000), 385--457.

\bibitem{cm-fez}
G. Christol and Z. Mebkhout, $p$-adic differential equations, 
in \textit{Algebra and number theory (Fez)}
Lect. Notes Pure Appl. Math., 208, Dekker, New York, 2000, 105--116. 

\bibitem{cm4}
G. Christol and Z. Mebkhout, Sur le th\'eor\`eme de l'indice des \'equations diff\'erentielles, IV,
\textit{Invent. Math.} \textbf{143} (2001), 629--672.

\bibitem{christol-remmal}
G. Christol and S. Remmal, Irregular $p$-adic linear differential equations,
in \textit{Algebra and number theory (Fez)}
Lect. Notes Pure Appl. Math., 208, Dekker, New York, 2000, 195--206. 

\bibitem{clark}
D. Clark, A note on the $p$-adic convergence of solutions of linear differential equations,
\textit{Proc. Amer. Math. Soc.} \textbf{17} (1966), 262--269.

\bibitem{temkin1}
A. Cohen, M. Temkin, and D. Trushin,
Morphisms of Berkovich curves and the different function,
arXiv:1408.2949v2 (2014).

\bibitem{conrad}
Brian Conrad, Relative ampleness in rigid geometry,
\textit{Annales de l'Institut Fourier (Grenoble)}
\textbf{56} (2006), 1049--1126.

\bibitem{corry}
S. Corry, Galois covers of the open $p$-adic disc,
\textit{Manuscripta Math.} \textbf{131} (2010), 43--61.

\bibitem{crew-isocrystals}
R. Crew, $F$-isocrystals and $p$-adic representations, 
in \textit{Algebraic Geometry---Bowdoin 1985, Part 2}, Proc. Sympos. Pure Math. 46.2,
Amer. Math. Soc, 1987, 111--138.

\bibitem{crew}
R. Crew, Finiteness theorems for the cohomology of an overconvergent isocrystal on a curve, \textit{Ann. Scient. \'Ec. Norm. Sup.} \textbf{31} (1998), 717--763.

\bibitem{crew-canonical}
R. Crew, Canonical extensions, irregularities, and the Swan conductor,
\textit{Math. Ann.} \textbf{316} (2000), 19--37.

\bibitem{deligne-eq}
P. Deligne, \textit{\'Equations Diff\'erentielles \`a Points Singuliers R\'eguliers},
Lecture Notes in Math.\ 163, Springer-Verlag, Berlin, 1970.

\bibitem{ducros1}
A. Ducros, Les espaces de Berkovich sont mod\'er\'es, d'apr\`es E. Hrushovski and F. Loeser, S\'eminaire Bourbaki expos\'e 1056, \textit{Ast\'erisque} \textbf{352}
(2013).

\bibitem{ducros2}
A. Ducros, About Hrushovski and Loeser's work on the homotopy type of Berkovich spaces,
arXiv:1309.0340v1 (2013).

\bibitem{ducros}
A. Ducros, \textit{La Structure des Courbes Analytiques}, in preparation; version of 12 Feb 2014
downloaded from
\url{http://www.math.jussieu.fr/~ducros/livre.html}.
 
\bibitem{dwork-zeta2}
B. Dwork, On the zeta function of a hypersurface, II, \textit{Annals of Math.}
\textbf{80} (1964), 227--299.

\bibitem{dwork-cycles}
B. Dwork, $p$-adic cycles, \textit{Publ. Math. IH\'ES} \textbf{37} (1969), 27--115.

\bibitem{dwork-pde2}
B. Dwork, On $p$-adic differential equations, II: The $p$-adic asymptotic behavior of solutions of linear differential equations with rational function coefficients,
\textit{Annals of Math.} \textbf{98} (1973), 366--376.

\bibitem{dwork-pde3}
B. Dwork, On $p$-adic differential equations, III: On $p$-adically bounded solutions of
ordinary linear differential equations with rational function coefficients, 
\textit{Invent. Math.} 20 (1973), 35--45.

\bibitem{dwork-bessel}
B. Dwork, Bessel functions as $p$-adic functions of the argument,
\textit{Duke Math. J.} \textbf{41} (1974), 711--738.

\bibitem{dwork-exponents}
B. Dwork, On exponents of $p$-adic differential equations,
\textit{J. reine angew. Math.} \textbf{484} (1997), 85--126.

\bibitem{dgs}
B. Dwork, G. Gerotto, and F. Sullivan, \textit{An Introduction to $G$-Functions},
Ann. Math. Studies 133, Princeton Univ. Press, Princeton, 1994.

\bibitem{dwork-robba}
B. Dwork and P. Robba, On ordinary linear $p$-adic differential equations,
\textit{Trans. Amer. Math. Soc.} \textbf{231} (1977), 1--46.

\bibitem{dwork-robba-natural}
B. Dwork and P. Robba, On natural radii of $p$-adic convergence,
\textit{Trans. Amer. Math. Soc.} \textbf{256} (1979), 199--213.

\bibitem{dwork-robba-effective}
B. Dwork and P. Robba, Effective $p$-adic bounds for solutions of homogeneous linear
differential equations, \textit{Trans. Amer. Math. Soc.} \textbf{259} (1980), 559--577.

\bibitem{eisenbud}
D. Eisenbud, \textit{Commutative Algebra with a View Toward Algebraic Geometry}, Graduate
Texts in Math. 150, Springer, New York, 1995.

\bibitem{faber1}
X. Faber,
Topology and geometry of the Berkovich ramification
locus for rational functions, I,
\textit{Manuscripta Math.} \textbf{142} (2013), 439--474.

\bibitem{faber2}
X. Faber,
Topology and geometry of the Berkovich ramification
locus for rational functions, II,
\textit{Math. Ann.} \textbf{356} (2013), 819--844.

\bibitem{garuti}
M. Garuti, Prolongement de rev\^{e}tements galoisiens en g\'eom\'etrie rigide,
\textit{Compos. Math.} \textbf{104} (1996), 305--331.

\bibitem{garuti2}
M. Garuti, Linear systems attached to cyclic inertia,
in \textit{Arithmetic Fundamental Groups and Noncommutative Algebra (Berkeley, CA, 1999)},
Proc.\ Symp.\
Pure Math.\ 70, Amer.\ Math.\ Soc., Providence, RI, 2002, 377--386.

\bibitem{gerard-levelt}
R. G\'erard and A.M. Levelt, Invariants mesurant l'irr\'egularit\'e en un point singulier des syst\`emes d'\'equations diff\'erentielles lin\'eaires,
\textit{Ann. Inst. Fourier (Grenoble)} \textbf{23} (1973), 157--195.

\bibitem{green-matignon}
B. Green and M. Matignon, Liftings of galois covers of smooth curves,
\textit{Compos. Math.} \textbf{113} (1998), 237--272.

\bibitem{green-matignon2}
B. Green and M. Matignon, Order $p$ automorphisms of the open disc of a $p$-adic field,
\textit{J. Amer. Math. Soc.} \textbf{12} (1999), 269--303.

\bibitem{grothendieck-derham}
A. Grothendieck, On the de Rham cohomology of algebraic varieties,
\textit{Publ. Math. IH\'ES} \textbf{29} (1966), 95--103.

\bibitem{sga1}
A. Grothendieck et al.,
\textit{SGA 1: Rev\^etements \'Etales et Groupe
Fondamental}, corrected edition,
Documents Math\'ematiques 3, Soc. Math. France, Paris, 2003.

\bibitem{grw}
W. Gubler, J. Rabinoff, and A. Werner, Skeletons and tropicalizations,
arXiv:1404.7044v1 (2014).

\bibitem{hops}
D. Harbater, A. Obus, R. Pries, and K. Stevenson,
Abhyankar's conjectures in Galois theory: Current status and future directions,
arXiv:1408.0859v1 (2014).

\bibitem{hrushovski-loeser}
E. Hrushovski and F. Loeser, \textit{Non-archimedean  tame  topology  and  stably  dominated  types}, Annals  of Mathematics Studies, Princeton Univ. Press, Princeton, to appear.

\bibitem{huber}
R. Huber, Swan representations associated with rigid analytic curves,
\textit{J. reine angew. Math.} \textbf{537} (2001), 165--234.

\bibitem{illusie}
L. Illusie, Complexe de de Rham--Witt et cohomologie crystalline,
\textit{Ann. Sci. \'Ec. Norm. Sup.} \textbf{12} (1979), 501--661.

\bibitem{kato-swan}
K. Kato, Swan conductors for characters of degree one in the imperfect residue field case,
in \textit{Algebraic $K$-theory and algebraic number theory (Honolulu, HI, 1987)}, 101--131, Contemp.\ Math.\ 83, Amer. Math. Soc., Providence, 1989.

\bibitem{katz-turrittin}
N.M. Katz, Nilpotent connections and the monodromy theorem:
applications of a result of Turrittin,
\textit{Publ. Math. IH\'ES} \textbf{39} (1970), 175--232.

\bibitem{katz-extension}
N.M. Katz, Local-to-global extensions of representations of fundamental groups,
\textit{Ann. Inst. Fourier} \textbf{36} (1986), 69--106.

\bibitem{kedlaya-semi-curve}
K.S. Kedlaya, Semistable reduction for overconvergent $F$-isocrystals on a curve,
\textit{Math. Res. Lett.} \textbf{10} (2003), 151--159.

\bibitem{kedlaya-monodromy}
K.S. Kedlaya, A $p$-adic local monodromy theorem, \textit{Annals of Math.} \textbf{160} (2004), 93--184.

\bibitem{kedlaya-overview}
K.S. Kedlaya, Local monodromy of $p$-adic differential equations: an overview, \textit{Int. J. Number Theory} \textbf{1} (2005), 109--154; errata posted at
\url{http://kskedlaya.org/papers}.

\bibitem{kedlaya-weil2}
K.S. Kedlaya, Fourier transforms and p-adic ``Weil II'', \textit{Compos. Math.} \textbf{142} (2006), 1426--1450. 

\bibitem{kedlaya-swan1}
K.S. Kedlaya,
Swan conductors for $p$-adic differential modules, I: A local construction, 
\textit{Alg. Number Theory} \textbf{1} (2007), 269--300. 

\bibitem{kedlaya-semistable1}
K.S. Kedlaya, Semistable reduction for overconvergent $F$-isocrystals, I:
    Unipotence and logarithmic extensions,
\textit{Compos. Math.} \textbf{143} (2007), 1164--1212.

\bibitem{kedlaya-semistable2}
K.S. Kedlaya, Semistable reduction for overconvergent $F$-isocrystals, II:
A valuation-theoretic approach, 
\textit{Compos. Math.} \textbf{144} (2008), 657--672. 

\bibitem{kedlaya-semistable3}
K.S. Kedlaya, Semistable reduction for overconvergent $F$-isocrystals, III:
Local semistable reduction at monomial valuations, 
\textit{Compos. Math.} \textbf{145} (2009), 143--172. 

\bibitem{kedlaya-book}
K.S. Kedlaya, \textit{$p$-adic Differential Equations}, 
Cambridge Univ.\ Press, Cambridge, 2010;
errata posted at \url{http://kskedlaya.org/papers}.

\bibitem{kedlaya-goodformal1}
K.S. Kedlaya, Good formal structures for flat meromorphic connections, I:
Surfaces, \textit{Duke Math. J.} \textbf{154} (2010), 343--418; erratum,
\textit{ibid.} \textbf{161} (2012), 733--734.

\bibitem{kedlaya-goodformal2}
K.S. Kedlaya,
Good formal structures for flat meromorphic connections, II: Excellent schemes, 
\textit{J. Amer. Math. Soc.} \textbf{24} (2011), 183--229.

\bibitem{kedlaya-semistable4}
K.S. Kedlaya,
Semistable reduction for overconvergent $F$-isocrystals, IV: Local semistable 
reduction at nonmonomial valuations, 
\textit{Compos. Math.} \textbf{147} (2011), 467--523.

\bibitem{kedlaya-swan2}
K.S. Kedlaya, Swan conductors for $p$-adic differential modules, II: Global variation, \textit{J. Inst. Math. Jussieu} \textbf{10} (2011), 191--224.

\bibitem{kedlaya-radii}
K.S. Kedlaya, Local and global structure of connections on meromorphic curves,
\textit{Compos. Math.} \textbf{151} (2015), 1096--1156.

\bibitem{kedlaya-tuitman}
K.S. Kedlaya and J. Tuitman, Effective bounds for Frobenius structures,
\textit{Rend. Sem. Mat. Padova} \textbf{128} (2012), 7--16.

\bibitem{kedlaya-xiao}
K.S. Kedlaya and L. Xiao, Differential modules on $p$-adic polyannuli, 
\textit{J. Inst. Math. Jussieu} \textbf{9} (2010), 155--201; erratum,
\textit{ibid.} \textbf{9} (2010), 669--671. 

\bibitem{kiehl}
R. Kiehl, Der Endlichkeitsatz f\"ur eigentliche Abbildungen
in der nichtarchimedischen Funktionentheorie,
\textit{Invent. Math.} \textbf{2} (1967), 191--214.

\bibitem{kiehl-comparison}
R. Kiehl, Die de Rham Kohomologie algebraischer Mannigfaltigkeiten \"uber einem bewerteten K\"orper, \textit{Publ. Math. IH\'ES} \textbf{33} (1967), 5--20.

\bibitem{levelt}
A.H.M. Levelt, Jordan decomposition for a class of singular differential operators,
\textit{Ark. Mat.} \textbf{13} (1975), 1--27.

\bibitem{levelt-vandenessen}
A.H.M. Levelt and A.R.P. van den Essen, \textit{Irregular Singularities in Several Variables}, Mem.\ Amer.\ Math.\ Soc.\ vol. 40, no.\ 270, Amer.\ Math. Soc., Providence, 1982.

\bibitem{loday-richaud}
M. Loday-Richaud, Stokes phenomenon, multisummability and differential
Galois groups, \textit{Ann. Inst. Fourier (Grenoble)} \textbf{44} (1994), 849--906.

\bibitem{loday-richaud2}
M. Loday-Richaud, Solutions formelles des syst\`emes diff\'erentiels lin\'eaires m\'eromorphes et sommation, \textit{Expo. Math.} \textbf{13} (1995), 116--162.

\bibitem{lutz}
E. Lutz, Sur l'\'equation $y^2 = x^3 + Ax + B$ sur les corps $p$-adiques,
\textit{J. reine angew. Math.} \textbf{177} (1937), 238--247.

\bibitem{majima}
H. Majima, \textit{Asymptotic Analysis for Integrable Connections with Irregular Singular
Points}, Lect.\ Notes in Math.\ 1075, Springer-Verlag, 1984.

\bibitem{malgrange}
B. Malgrange, Sur les points singuliers des \'equations diff\'erentielles,
\emph{Enseign. Math.} \textbf{20} (1974), 147--176.

\bibitem{malgrange2}
B. Malgrange, Connexions m\'eromorphes 2: Le r\'eseau canonique,
\textit{Invent. Math.} \textbf{124} (1996), 367--387.

\bibitem{manjra}
S. Manjra, A note on non-Robba $p$-adic differential equations,
\textit{Proc. Japan Acad. Ser. A} \textbf{87} (2011), 40--43.

\bibitem{marmora}
A. Marmora, Irr\'egularit\'e et conducteur de Swan $p$-adiques,
\textit{Doc. Math.} \textbf{9} (2004), 413--433.

\bibitem{matsuda1}
S. Matsuda, Local indices of $p$-adic differential operators corresponding to Artin-Schreier-Witt coverings, \textit{Duke Math. J.} \textbf{77} (1995), 607--625.

\bibitem{matsuda}
S. Matsuda, Katz correspondence for quasi-unipotent overconvergent isocrystals,
\textit{Compos. Math.} \textbf{134} (2002), 1--34.

\bibitem{matsuda-conj}
S. Matsuda, Conjecture on Abbes-Saito filtration and Christol-Mebkhout filtration,
in \textit{Geometric Aspects of Dwork Theory, Vol. II}, de Gruyter, Berlin, 2004, 
845--856.

\bibitem{mebkhout-comparison}
Z. Mebkhout, Le th\'eor\`eme de comparaison entre cohomologies de de Rham d'une
vari\'et\'e alg\'ebrique complexe et le th\'eor\`eme d'existence de Riemann,
\textit{Publ. Math. IH\'ES} \textbf{69} (1989), 47--89.

\bibitem{mebkhout-monodromy}
Z. Mebkhout, Analogue $p$-adique du th\'eor\`eme de Turrittin et le th\'eor\`eme de la monodromie
$p$-adique, \textit{Invent. Math.} \textbf{148} (2002), 319--351.

\bibitem{mezard-romagny-tossici}
A. M\'ezard, M. Romagny, and D. Tossici, Sekiguchi-Suwa theory revisited, 
\textit{J. Th\'eor. Nombres Bordeaux} \textbf{26} (2014), 163-200.

\bibitem{mochizuki1}
T. Mochizuki, Good formal structure for meromorphic flat connections on smooth projective
surfaces, in \textit{Algebraic analysis and around}, 
Advanced Studies in Pure Math. 54, Math. Soc.
Japan, 2009, 223--253.

\bibitem{mochizuki2}
T. Mochizuki, Wild harmonic bundles and wild pure twistor $D$-modules,
\textit{Ast\'erisque} \textbf{340} (2011).

\bibitem{mochizuki3}
T. Mochizuki, The Stokes structure of a good meromorphic flat bundle,
\textit{J. Inst. Math. Jussieu} \textbf{10} (2011), 675--712.

\bibitem{obus12}
A. Obus, The (local) lifting problem for curves, in 
\textit{Galois-Teichm\"uller Theory and Arithmetic Geometry},
Adv.\ Stud.\ Pure Math.\ 63,
Math.\ Soc.\ Japan, Tokyo, 2012, 359--412.

\bibitem{obus-gen}
A. Obus, A generalization of the Oort conjecture, arXiv:1502.07623v1 (2015).

\bibitem{obus-wewers}
A. Obus and S. Wewers, Cyclic extensions and the local lifting problem,
\textit{Ann. of Math.} \textbf{180} (2014), 233--284.

\bibitem{ohkubo1}
S. Ohkubo, A note on logarithmic growth Newton polygons of $p$-adic differential equations,
\textit{Int. Math. Res. Notices} \textbf{2014}, article ID rnu017.

\bibitem{ohkubo3}
S. Ohkubo, On differential modules associated to de Rham representations
in the imperfect residue field case, arXiv:1307.8110v3 (2014).

\bibitem{ohkubo2}
S. Ohkubo, On the rationality and continuity of logarithmic growth filtration of solutions of $p$-adic differential equations,
arXiv:1502.03804v1 (2015).

\bibitem{oort}
F. Oort, Lifting algebraic curves, abelian varieties, and their endomorphisms
to characteristic zero, in \textit{Algebraic Geometry, Bowdoin, 1985 (Brunswick, Maine,
1985)}, Proc.\ Symp.\ Pure Math.\ 46, Amer.\ Math.\ Soc., Providence, RI, 1987,
165--195.

\bibitem{oss}
F. Oort, T. Sekiguchi, and N. Suwa, On the deformation of Artin-Schreier to Kummer,
\textit{Ann. Scient. \'Ec. Norm. Sup.} \textbf{22} (1989), 345--375.

\bibitem{pagot}
G. Pagot, $\mathbb{F}_p$-espaces vectoriels de formes diff\'erentielles logarithmiques sur la droite projective, \textit{J. Number Theory} \textbf{97} (2002), 58--94.

\bibitem{payne}
S. Payne, Analytification is the limit of all tropicalizations,
\textit{Math. Res. Lett.} \textbf{16} (2009), 543--556.

\bibitem{poineau-angelique}
J. Poineau, Les espaces de Berkovich sont ang\'eliques, 
\textit{Bull. Soc. Math. France} \textbf{141} (2013), 267--297.

\bibitem{pulita-poineau2}
J. Poineau and A. Pulita, The convergence Newton polygon
of a $p$-adic differential equation II: Continuity and finiteness on Berkovich curves,
arXiv:1209.3663v1 (2012).

\bibitem{pulita-poineau3}
J. Poineau and A. Pulita, Continuity and finiteness of the radius of convergence of a $p$-adic differential equation via potential theory,
arXiv:1209.6276v1 (2012).

\bibitem{pulita-poineau4}
J. Poineau and A. Pulita, The convergence Newton polygon
of a $p$-adic differential equation III: global decomposition and controlling graphs,
arXiv:1308.0859v1 (2013).

\bibitem{pulita-poineau5}
J. Poineau and A. Pulita, The convergence Newton polygon
of a $p$-adic differential equation IV: local and global index theorems,
arXiv:1309.3940v1 (2013).

\bibitem{pons1}
E. Pons, Polygone de convergence d'un module diff\'erentiel $p$-adique,
\textit{C.R. Acad. Sci. Paris S\'er. I Math.} \textbf{327} (1998), 77--80.

\bibitem{pons}
E. Pons, E. Modules diff\'erentiels non solubles. Rayons de convergence et indices, 
\textit{Rend. Sem. Mat. Univ. Padova} \textbf{103} (2000), 21--45.

\bibitem{pop}
F. Pop, Lifting of curves: the Oort conjecture,
\textit{Ann. of Math.} \textbf{180} (2014), 285--322.

\bibitem{pulita-rank1}
A. Pulita, Rank one solvable $p$-adic differential equations
and finite Abelian characters via Lubin--Tate groups,
\textit{Math. Ann.} \textbf{337} (2007), 489--555.

\bibitem{pulita-poineau}
A. Pulita,  The convergence Newton polygon of a $p$-adic differential equation I: 
Affinoid domains of the Berkovich affine line, arXiv:1208.5850v4 (2014).

\bibitem{pulita-hdr}
A. Pulita, \'Equations diff\'erentielles $p$-adiques, m\'emoire
d'habilitation \`a diriger des recherches, Univ. Montpellier, 2014.

\bibitem{robba-index1}
P. Robba, On the index of $p$-adic differential operators, I,
\textit{Annals of Math.} \textbf{101} (1975), 280--316.

\bibitem{robba-index2}
P. Robba, On the index of $p$-adic differential operators, II,
\textit{Duke Math. J.} \textbf{43} (1976), 19--31.

\bibitem{robba-hensel}
P. Robba, Lemmes de Hensel pour les op\'erateurs diff\'erentiels. Application a la r\'eduction formelle des \'equations diff\'erentielles, \textit{Ens. Math.} \textbf{26} (1980), 279--311.

\bibitem{robba-index3}
P. Robba, On the index of $p$-adic differential operators, III,
Application to twisted exponential sums,
\textit{Ast\'erisque} \textbf{119--120} (1984), 191--266.

\bibitem{robba-cohom}
P. Robba, Une introduction na\"\i ve aux cohomologies de Dwork,
\textit{M\'em. Soc. Math. France} \textbf{23} (1986), 61--105.

\bibitem{robba-indice4}
P. Robba, Indice d'un operateur diff\'erentiel $p$-adique, IV: Cas des syst\`emes. Mesure
de l'irr\'egularit\'e dans un disque,
\textit{Ann. Inst. Fourier, Grenoble} \textbf{35} (1985), 13--55.

\bibitem{sabbah}
C. Sabbah, \'Equations diff\'erentielles \`a points singuliers
irr\'eguliers et phenomene de Stokes en dimension 2,
\textit{Ast\'erisque} \textbf{263} (2000).

\bibitem{sabbah2}
C. Sabbah, \textit{Introduction to Stokes Structures},
Lecture Notes in Math.\ 2060, Springer-Verlag, Berlin, 2013.

\bibitem{saidi}
M. Sa\"idi, Fake liftings of Galois covers between smooth curves,
in \textit{Galois-Teichm\"uller Theory and Arithmetic Geometry},
Adv.\ Stud.\ Pure Math.\ 63,
Math.\ Soc.\ Japan, Tokyo, 2012, 457--501.

\bibitem{schneider-teitelbaum}
P. Schneider and J. Teitelbaum,
Algebras of $p$-adic distributions and admissible
representations, \textit{Invent. Math.} \textbf{153} (2003), 145--196.

\bibitem{sekiguchi-suwa}
T. Sekiguchi and N. Suwa, Th\'eorie de Kummer-Artin-Schreier et applications,
\textit{J. Th\'eorie Nombres Bordeaux} \textbf{7} (1995), 177--189.

\bibitem{sekiguchi-suwa1}
T. Sekiguchi and N. Suwa, On the unified Kummer-Artin-Schreier-Witt theory, Pr\'epublications du laboratoire de Math\'ematiques Pures de Bordeaux, no. 111 (1999).

\bibitem{sekiguchi-suwa2}
T. Sekiguchi and N. Suwa, A note on extensions of algebraic and formal groups. IV.
Kummer-Artin-Schreier-Witt theory of degree $p^2$, \textit{Tohoku Math. J.}
\textbf{53} (2001), 203--240.

\bibitem{serre}
J.-P. Serre, \textit{Local Fields}, Graduate Texts in Math. 67,
Springer-Verlag, New York, 1979.

\bibitem{svdp}
M. Singer and M. van der Put, \textit{Galois Theory of Linear Differential Equations},
Grundlehren der math. Wissenschaften 328, Springer-Verlag, Berlin, 2003.

\bibitem{stichtenoth}
H. Stichtenoth, \textit{Algebraic Function Fields and Codes},
Graduate Texts in Math. 254, Springer-Verlag, Berlin, 2009.

\bibitem{temkin2}
M. Temkin,
Metric uniformization of morphisms of Berkovich curves,
arXiv:1410.6892v1 (2014).

\bibitem{thomas}
L. Thomas, Ramification groups in Artin-Schreier-Witt extensions,
\textit{J.\ Th\'eor.\ Nombres Bordeaux} \textbf{17} (2005), 689--720.

\bibitem{thuillier}
A. Thuillier, Th\'eorie du potentiel sur les courbes en g\'eom\'etrie
analytique non archim\'edienne, PhD thesis, Universit\'e de Rennes, 2005.

\bibitem{tossici}
D. Tossici, Models of $\mu_{p^2,K}$ over a discrete valuation ring
(with an appendix by X. Caruso),
\textit{J. Algebra} \textbf{323} (2010), 1908--1957.

\bibitem{tsuzuki-index}
N. Tsuzuki, The local index and the Swan conductor, \textit{Comp. Math.} \textbf{111} (1998), 245--288.

\bibitem{tsuzuki-monodromy}
N. Tsuzuki, Finite local monodromy of overconvergent unit-root $F$-isocrystals on a curve,
\textit{Amer. J. Math.} \textbf{120} (1998), 1165--1190.

\bibitem{tsuzuki-monodromy2}
N. Tsuzuki, Morphisms of $F$-isocrystals and the finite monodromy theorem for unit-root
$F$-isocrystals, \textit{Duke Math. J.} 111 (2002), 385--419.

\bibitem{turchetti}
D. Turchetti, Contributions to arithmetic geometry in
mixed characteristic: Lifting covers of curves, non-Archimedean geometry and the $\ell$-modular Weil representation, PhD thesis, Univ. de Versailles St.-Quentin, 2014; available online at \url{http://webusers.imj-prg.fr/~daniele.turchetti/}.

\bibitem{turrittin}
H.L. Turrittin, Convergent solutions of ordinary linear homogeneous differential equations in the neighborhood of an irregular singular point, \textit{Acta Math.} \textbf{93} (1955), 27--66.

\bibitem{tyomkin}
I. Tyomkin, Tropical geometry and correspondence theorems via toric stacks,
\textit{Math. Ann.} \textbf{353} (2012), 945--995.

\bibitem{varadarajan}
V.S. Varadarajan, Linear meromorphic differential equations: a modern point of view,
\textit{Bull. Amer. Math. Soc.} \textbf{33} (1996), 1--42.

\bibitem{wasow}
W. Wasow, \textit{Asymptotic Expansions for Ordinary Differential Equations},
reprint of the 1976 edition, Dover, New York, 1987.

\bibitem{waterhouse}
W. Waterhouse, A unified Kummer-Artin-Schreier sequence, \textit{Math. Ann.}
\textbf{277} (1987), 447--451.

\bibitem{werner-pr}
A. Werner, Analytification and tropicalization over non-archimedean fields,
Simons Symposium 2015 lecture notes available at
\url{http://users.math.yale.edu/~sp547/SimonsSymposium2015.html}.

\bibitem{xiao-ramif1}
L. Xiao, On ramification filtrations and $p$-adic differential equations, I: equal characteristic case, \textit{Alg. and Number Theory} \textbf{4} (2010), 969--1027.

\bibitem{xiao-ramif2}
L. Xiao, On ramification filtrations and $p$-adic differential equations, II: mixed characteristic case, \textit{Compos. Math.} \textbf{148} (2012), 415--463.

\bibitem{xiao-refined}
L. Xiao, On the refined ramification filtrations in the equal characteristic case,
\textit{Alg. and Num. Theory} \textbf{6} (2012), 1579--1667.

\bibitem{young}
P.T. Young, Radii of convergence and index for $p$-adic differential operators, 
\textit{Trans. Amer. Math. Soc.} \textbf{333} (1992), 769--785.

\end{thebibliography}
\end{document}